\documentclass[12pt]{amsart}
\usepackage{amsaddr}
\usepackage{amssymb}
\usepackage{mathtools}
\usepackage{txfonts}
\usepackage{eucal}
\usepackage{extarrows}

\usepackage{color}

\usepackage[all]{xy}

\usepackage{tikz}

\usepackage{graphicx}
\usepackage{float}

\usepackage{xspace}

\usepackage[a4paper,body={16.3cm,22.8cm},centering]{geometry}
\usepackage[colorlinks,final,backref=page,hyperindex,pdftex]{hyperref}

\newcommand{\nc}{\newcommand}
\newcommand{\delete}[1]{}

\nc{\mlabel}[1]{\label{#1}}  
\nc{\mcite}[1]{\cite{#1}}  
\nc{\mref}[1]{\ref{#1}}  
\nc{\mbibitem}[1]{\bibitem{#1}} 

\delete{
\nc{\mlabel}[1]{\label{#1}  
{\hfill \hspace{1cm}{\small\tt{{\ }\hfill(#1)}}}}
\nc{\mcite}[1]{\cite{#1}{\small{\tt{{\ }(#1)}}}}  
\nc{\mref}[1]{\ref{#1}{{\tt{{\ }(#1)}}}}  
\nc{\mbibitem}[1]{\bibitem[\bf #1]{#1}} 
}

\newtheorem{theorem}{Theorem}[section]
\newtheorem{prop}[theorem]{Proposition}
\newtheorem{lemma}[theorem]{Lemma}
\newtheorem{coro}[theorem]{Corollary}

\theoremstyle{definition}
\newtheorem{defn}[theorem]{Definition}
\newtheorem{remark}[theorem]{Remark}
\newtheorem{exam}[theorem]{Example}
\newtheorem{prop-def}{Proposition-Definition}[section]

\newcommand\cal[1]{\mathcal{#1}}

\newcommand\alphlist{a,b,c,d,e,f,g,h,i,j,k,l,m,n,o,p,q,r,s,t,u,v,w,x,y,z}
\newcommand\Alphlist{A,B,C,D,E,F,G,H,I,J,K,L,M,N,O,P,Q,R,S,T,U,V,W,X,Y,Z}
\newcommand\getcmds[3]{\expandafter\newcommand\csname #2#1\endcsname{#3{#1}}}
\makeatletter
\@for\x:=\alphlist\do{\expandafter\getcmds\expandafter{\x}{frak}{\mathfrak}}
\@for\x:=\Alphlist\do{\expandafter\getcmds\expandafter{\x}{frak}{\mathfrak}}
\makeatother

\nc{\bfk}{{\bf k}}
\font\cyr=wncyr10

\newfont{\scyr}{wncyr10 scaled 550}
\nc{\sha}{\mbox{\cyr X}}
\nc{\ssha}{\mbox{\bf \scyr X}}

\nc{\id}{\mathrm{id}}
\nc{\Id}{\mathrm{Id}}
\nc{\lbar}[1]{\overline{#1}}
\nc{\ot}{\otimes}
\nc{\dep}{\mathrm{dep}}
\nc{\leaf}{\mathrm{leaf}}

\usetikzlibrary{calc}
\newlength\xch
\xch=0.50cm%
\newsavebox\dbox
\sbox\dbox{\tikz{\fill (0,0) circle (0.05cm);}}
\newif\ifqdd
\newif\ifzdd
%
\newcommand\cddf[3]{%
\coordinate (#2) at ($(#1)+(#3)$);
\draw (#1)--(#2);
\ifqdd\node at (#1) {\usebox\dbox};\fi
\ifzdd\node at (#2) {\usebox\dbox};\fi}
\newcommand\cdx[4][1]{\cddf{#2}{#3}{#4:#1*\xch}}
\newcommand\cdl[2][1]{\cdx[#1]{#2}{#2l}{135}}
\newcommand\cdr[2][1]{\cdx[#1]{#2}{#2r}{45}}
\newcommand\cdlr[2][1]{%
\foreach \i in {#2} {\cdl[#1]{\i}\cdr[#1]{\i}}}
%
\newcommand\cda[2][1]{\cdx[#1]{#2}{#2a}{90}}
\newcommand\cdb[2][1]{\cdx[#1]{#2}{#2b}{-90}}
%
%
%
\let\treeoo\treeo%
\newcommand\zhongdian[2]{\node at ($(#1)!0.5!(#2)$) {\usebox\dbox};}
\newcommand\zhd[1]{\foreach \i/\j in {#1} {\zhongdian{\i}{\i\j}}}


\newcommand\ocdx[6][1]{%
\node[draw,circle,minimum size=2pt,label={#6:$#5$}]
(#3) at ($(#2)+(#4:#1*\xch)$) {};
\draw (#2)--(#3);}

\newcommand\scopeclip[1]{\begin{scope}
\clip(-1.1,-0.5)rectangle(1.1,1);#1\end{scope}}
\newcommand\XX[2][]{%
\tikz[line width=0.15ex,x=0.5cm,y=0.5cm,baseline,inner sep=1.5pt,
every node/.style={font=\scriptsize},#1]{
\scopeclip{\draw (135:1.5)--(0,0)--(45:1.5) (0,-0.5)--(0,0);}#2}}
\newcommand\xx[3]{%
\scopeclip{\draw(#1/10,#2/10)--+(#3*45:2.5);}}
\newcommand\xxl[2]{\xx{#1}{#2}3}
\newcommand\xxr[2]{\xx{#1}{#2}1}
\newcommand\xxlr[2]{\xxl{#1}{#2}\xxr{#1}{#2}}
\newcommand\xxh[6]{
\draw(#1/10,#2/10)+(0.5*#3*45+0.5*#4*45:#6) node[above] {$#5$};}
\newcommand\xxhu[4][0.15]{\xxh{#2}{#3}13{#4}{#1}}
\newcommand\stree[1]{\XX{\xxhu[0.25]00{#1}}}

\nc{\dnx}{\Delta_n A} \nc{\dx}{\Delta A} \nc{\dgp}{{\rm deg_{P}}}
\nc{\dgt}{{\rm deg_{T}}} \nc{\dg}{{\rm deg}} \nc{\ida}{ID($A$)} \nc{\tu}{\tilde{u}} \nc{\tv}{\tilde{v}}
\nc{\nr}{\calr_n} \nc{\nz}{\calz_n} \nc{\fun}{\cala_{n,d}}
 \nc{\fbase}{\calb} \nc{\LF}{\mathrm{RF}} \nc{\FFA}{\mathrm{LF}} \nc{\irr}{\mathrm{Irr}}
 \nc{\result}{\bfk\mathrm{Irr}(S_n)}  \nc{\I}{I_{\mathrm{ID},n}^0}
 \nc{\nrs}{\calr_n^\star} \nc{\ii}{\mathrm{I}} \nc{\iii}{\mathrm{II}}
\nc{\intl}{{\rm int}}\nc{\ws}[1]{{#1}}\nc{\deleted}[1]{\delete{#1}}\nc{\plas}{placements\xspace}

\nc{\bim}[1]{#1}  \nc{\shaop}{\sha_{\Omega}^{+}}  \nc{\shao}{\sha_{\Omega}}
\nc{\bbim}[2]{#1 #2} \nc{\bbbim}[2]{#1,\, #2} \nc{\RBF}{{\rm RBF}}
\nc{\frb}{F_{\RB}} \nc{\shaf}{\ssha_{\tiny{\Omega}}} \nc{\sham}{\diamond_{\tiny{\Omega}}}
\nc{\lf}{\lfloor} \nc{\rf}{\rfloor} \nc{\shan}{\ssha_{\lambda}}
\nc{\rlex}{{\rm {lex}}} \nc{\bb}{\Box} \nc{\ra}{\rightarrow}
\nc{\e}{{\rm {e}}}
\nc{\DDF}{\mathrm{DD}(X,\,\Omega)}\nc{\DTF}{\mathrm{DT}(X,\,\Omega)} \nc{\DT}{\mathrm{DT}'(\Omega,\,V)}
\nc{\bra}{\mathrm{bra}} \nc{\bre}{\mathrm{bre}}
\nc{\dec}{\mathrm{dec}} \nc{\diamondw}{\diamond_{w}}
\nc{\type}{\mathrm{type}}

\nc\caF[1]{\cal{F}_{#1}(X,\,\Omega)}
\nc\calt{\cal{T}(X,\,\Omega)} \nc\caltn{\cal{T}_n(X,\,\Omega)}
\nc\caltbin{\cal{T}_b(X,\,\Omega)}
\nc\calta{\cal{T}_0(X,\,\Omega)}
\nc\caltb{\cal{T}_1(X,\,\Omega)}
\nc\caltc{\cal{T}_2(X,\,\Omega)}
\nc\caltd{\cal{T}_3(X,\,\Omega)}
\nc\caltm{\cal{T}_m(X,\,\Omega)}
\nc\calf{\cal{F}(X,\,\Omega)}
\nc\fram{\frak{M}(\Omega,\, X)}
\nc\shaw{\sha^{NC}_w(\Omega,\, X)}
\nc\dw{\diamond_w} \nc\dl{\diamond_\ell}
\nc\shal{\sha^{NC}_\ell(X,\, \Omega)} \nc\shav{\sha^{NC}_w(\Omega,\, V)} \nc\shat{\sha^{NC,1}_w(\Omega,\, T^{+}(V))}
\nc{\cfo}{\cal{F}(X,\,\Omega)}
\nc{\frat}{\mathfrak{T}}
\nc{\shh}{\mathrm{Sh}^+_{\Omega}}
\nc{\sh}{\mathrm{Sh}_{\Omega}}


\nc{\lar}{\varinjlim}
\nc\XO{(X,\,\Omega)}
\def\cxo#1#2;{\cal{#1}#2\XO}
\def\cxob#1#2;{\cal{#1}#2_b\XO}
\nc\lrf[2]{B_{#2}^+(#1)}
\nc{\fd}{\mathrm{\text{typed angularly decorated planar rooted trees}}}
\nc{\rb}{\mathrm{RBFWs}} \nc{\dfw}{\mathrm{DFW{(X)}}} \nc{\tfw}{\mathrm{TFW{(X)}}}
\nc{\tfv}{\mathrm{TFW{(V)}}} \nc{\rbf}{\mathrm{RBF}}

\nc{\ets}{\mathrm{ETS}}

\def\Ve#1,#2,#3;{\vee_{#1,\,(#2,\,#3)}}
\def\bigv#1;#2;#3;{\bigvee\nolimits_{#1}^{#2;\,#3}}

\newcommand{\K}{\mathbf{k}}
\renewcommand{\geq}{\geqslant}
\renewcommand{\leq}{\leqslant}

\begin{document}

\title[Typed  trees and generalized Rota-Baxter algebras]{Typed angularly decorated planar rooted trees and generalized Rota-Baxter algebras}


\author{Lo\"\i c Foissy}
\address{Corresponding author\\
Univ. Littoral C\^ote d'Opale, UR 2597 LMPA, Laboratoire de Math\'ematiques Pures et Appliqu\'ees Joseph Liouville F-62100 Calais, France}
\email{loic.foissy@univ-littoral.fr}

\author{Xiao-Song Peng}
\address{School of Mathematics and Statistics, Lanzhou University, Lanzhou, Gansu 730000, China}
\email{pengxiaosong3@163.com}

\date{\today}

\begin{abstract}
We introduce a generalization of parametrized Rota-Baxter algebras, named $\Omega$-Rota-Baxter algebra, which includes family and matching Rota-Baxter algebras.
We study the structure needed on the set $\Omega$ of parameters in order to obtain that free $\Omega$-Rota-Baxter algebras
are described in terms of typed and angularly decorated planar rooted trees:
we obtain the notion of $\lambda$-extended diassociative semigroup, which includes sets (for matching Rota-Baxter algebras)
and semigroups (for family Rota-Baxter algebras), and many other examples.
We also describe free commutative $\Omega$-Rota-Baxter algebras generated by a commutative algebra $A$
in terms of typed words.
\end{abstract}

\subjclass[2020]{17B38, 05C05, 16S10, 08B20}

\keywords{Generalized Rota-Baxter algebra; diassociative semigroups; planar rooted trees; typed words}

\maketitle

\tableofcontents

\setcounter{section}{0}

\allowdisplaybreaks
\section{Introduction}

A Rota-Baxter algebra is an associative algebra $A$ with a linear endomorphism $P:A\longrightarrow A$, such that
for any $a,b\in A$,
\[P(a)P(b)=P(aP(b))+P(P(a)b)+\lambda P(ab),\]
where $\lambda$ is a scalar called the weight of the Rota-Baxter operator $P$. Firstly introduced by Baxter \mcite{Baxter60}
in a context of probability theory and popularized by Rota \cite{Rota1,Rota2,Rota3},
they now appear in numerous fields of mathematics and physics, see for example \cite{EFG082} for examples and more details.

The first appearance of family Rota-Baxter algebras seems to be in \cite{EFGBP07}, in the context of Renormalization
in Quantum Field Theories. This terminology, due to Li Guo \cite{Guo09} refers to an associative algebra $A$
with a family of linear endomorphims $P_\alpha:A\longrightarrow A$ indexed by the elements of a semigroup $(\Omega,*)$,
such that for any $a,b\in A$, for any $\alpha,\beta \in \Omega$,
\[P_\alpha(a)P_\beta(b)=P_{\alpha*\beta}(P_\alpha(a)b+aP_\beta(b)+\lambda ab).\]
This notion of matching Rota-Baxter algebra is introduced in \cite{ZGL20}.
This time, the Rota-Baxter operators are indexed by the elements of a set $\Omega$ with no structure, and the weights
are given by a family of scalars $(\lambda_\alpha)_{\alpha\in \Omega}$. For any $a,b\in A$, for any $\alpha,\beta \in \Omega$,
\[P_\alpha(a)P_\beta(b)=P_\beta(P_\alpha(a)b)+P_\alpha(aP_\beta(b))+\lambda_\beta P_\alpha(ab).\]
These notions have been extended to other types of algebras (Lie, pre-Lie, dendriform$\ldots$), see for example
\cite{ZGL20,ZGG20,ZG19,ZGM}.

Our aim here is a generalization of both family and matching Rota-Baxter algebras, in the spirit of what is made in \cite{Foi20}
for dendriform algebras. We here consider that the set of parameters $\Omega$ is given five operations
$\leftarrow, \rightarrow, \lhd, \rhd$  and $\cdot$, and a family of scalars 
$\lambda=(\lambda_{\alpha,\beta})_{\alpha,\beta \in \Omega}$.
An $\Omega$-Rota-Baxter algebra of weight $\lambda$ is an associative algebra $A$
with a family of linear endomorphisms indexed by $\Omega$ such that for any $a,b\in A$, for any $\alpha,\beta \in \Omega$,
\[P_{\alpha}(a)P_{\beta}(b)=P_{\alpha \rightarrow \beta}(P_{\alpha \rhd \beta}(a)b)+P_{\alpha \leftarrow \beta}(a P_{\alpha \lhd \beta}(b))+ \lambda_{\alpha, \beta} P_{\alpha \cdot \beta} (ab).\]
Taking
\begin{align*}
\alpha\rightarrow \beta&=\alpha\leftarrow \beta=\alpha \cdot \beta=\alpha *\beta,&
\alpha \rhd \beta&=\alpha,&
\alpha\lhd \beta&=\beta,
\end{align*}
and $\lambda_{\alpha,\beta}$ being constant, we recover in this way family Rota-Baxter algebras. Taking
\begin{align*}
\alpha\rightarrow \beta&=\beta,&\alpha\leftarrow \beta&=\alpha,&\alpha \cdot \beta&=\alpha,&
\alpha \rhd \beta&=\alpha,&\alpha\lhd \beta&=\beta,
\end{align*}
and $\lambda_{\alpha,\beta}$ depending only on $\beta$, w recover matching Rota-Baxter algebras.

For any  set $\Omega$ with five operations and any family of scalars $\lambda$, we define an operad  and a category of
$\Omega$-Rota-Baxter algebras (Definition \ref{def:orb}).
 This is far too general, and we impose the extra constraint that the combinatorics
of Rota-Baxter algebras is somehow preserved. To be more precise, as free Rota-Baxter algebras are based on
planar rooted trees \cite{ZGM},
we impose that free $\Omega$-Rota-Baxter algebras own a description in terms of angularly decorated (by the set of generators)
and typed (by $\Omega$) planar rooted trees, that is to say in terms of planar rooted trees
with angles decorated by the generators and internal edges decorated by elements of $\Omega$,
with an inductive description of the associative product and the Rota-Baxter operators being given by the grafting on a new root,
the created internal edge begin of the required type. We show in Theorem \ref{propRB} that this imposes strong constraints
on $\Omega$: we obtain that this combinatorial description holds  if, and only if $\Omega$ is a $\lambda$-$\ets$, as defined in Definition \ref{def:leds}. In particular,
$(\Omega,\leftarrow,\rightarrow)$ has to be a diassocative semigroup: for any $\alpha,\beta,\gamma\in \Omega$,
\begin{align*}
(\alpha \leftarrow \beta) \leftarrow \gamma=&\ \alpha \leftarrow (\beta \leftarrow \gamma)= \alpha \leftarrow (\beta \rightarrow \gamma),\\
(\alpha \rightarrow \beta) \leftarrow \gamma=&\ \alpha \rightarrow (\beta \leftarrow \gamma),\\
(\alpha \rightarrow \beta) \rightarrow \gamma=&\ (\alpha \leftarrow \beta) \rightarrow \gamma=\alpha \rightarrow (\beta \rightarrow \gamma).
\end{align*}
This notion firstly appeared in Loday's work~\cite{Lod01} under the name of (associative) dimonoid;
the free dimonoid is also constructed  in Loday's article.
Moreover, $(\Omega,\leftarrow,\rightarrow,\lhd,\rhd)$ is an extended semigroup (see Definition \ref{defEDS} below),
a notion used in \cite{Foi20} for parametrization of dendriform algebras. Particular examples of $\lambda$-$\ets$ attached to a set
give matching Rota-Baxter algebras (see Example \ref{ex2.4}-(b), with $\psi_\cdot(\alpha \otimes \beta)=\lambda \alpha$)
and particular examples of $\lambda$-$\ets$ attached to a semigroup gives family Rota-Baxter algebras
(see Example \ref{ex2.4}-(c)). In the case of weight 0, we obtain the generalization of the result \cite{EFG082}
establishing that any Rota-Baxter of weight 0 is a dendriform algebra, see Proposition \ref{propdend}.
Moreover, generalizing the construction of free commutative Rota-Baxter algebras,
we obtain that free commutative $\Omega$-Rota-Baxter algebras can be described in terms of $\Omega$-typed words
(Proposition \ref{propcommRB} and Theorem \ref{theocomRB}). \\

This paper is organised as follows. The first section introduces the definitions of EDS, $\lambda$-$\ets$,
$\ets$ and of $\Omega$-Rota-Baxter algebras.
The main result on free Rota-Baxter algebras and $\lambda$-$\ets$
is then proved (Theorem \ref{propRB}), with a description of free $\Omega$-Rota-Baxter algebras
in terms of trees. The last subsection deals with commutative $\Omega$-Rota-Baxter algebras and their description
in terms of typed words (Theorem \ref{theocomRB}). The second section gives more examples
of $\lambda$-$\ets$ and $\ets$, and in particular a classification of these objects of cardinality 2.\\

{\bf Notation.} Throughout this paper, {\bfk} is a unitary commutative ring which will be the base
ring of all modules, algebras, as well as linear maps.

\section{$\Omega$-Rota-Baxter algebras}
\subsection{Definitions}
We first recall the definition of diassociative semigroups and extended diassociative semigroups of~\cite{Foi20},
where these objects were used for parametrized versions of dendriform algebras.

\begin{defn}\cite{Foi20, Lod01}
A {\bf diassociative semigroup} is a family $(\Omega, \leftarrow, \rightarrow)$, where $\Omega$ is a set and $\leftarrow, \rightarrow: \Omega \times \Omega \rightarrow \Omega$ are maps such that
\begin{align*}
(\alpha \leftarrow \beta) \leftarrow \gamma=&\ \alpha \leftarrow (\beta \leftarrow \gamma)= \alpha \leftarrow (\beta \rightarrow \gamma),\\
(\alpha \rightarrow \beta) \leftarrow \gamma=&\ \alpha \rightarrow (\beta \leftarrow \gamma),\\
(\alpha \rightarrow \beta) \rightarrow \gamma=&\ (\alpha \leftarrow \beta) \rightarrow \gamma=\alpha \rightarrow (\beta \rightarrow \gamma),
\end{align*}
for all $\alpha, \beta, \gamma \in \Omega.$
\end{defn}

\begin{defn} \label{defEDS} \cite[Definition~2]{Foi20}
An {\bf extended diassociative semigroup} (abbr. EDS) is a family $(\Omega, \leftarrow, \rightarrow, \lhd,\rhd)$, where $\Omega$ is a set and $\leftarrow,\rightarrow, \lhd,\rhd: \Omega \times \Omega \rightarrow \Omega$ such that $(\Omega, \leftarrow, \rightarrow)$ is a diassociative semigroup and
\begin{align}
\label{EQ1}\alpha \rhd (\beta \leftarrow \gamma)=&\ \alpha \rhd \beta,\\
(\alpha \rightarrow \beta) \lhd \gamma=&\ \beta \lhd \gamma,\\
(\alpha \lhd \beta) \leftarrow ((\alpha \leftarrow \beta) \lhd \gamma)=&\ \alpha \lhd (\beta \leftarrow \gamma),\\
(\alpha \lhd \beta) \lhd ((\alpha \leftarrow \beta) \lhd \gamma)=&\ \beta \lhd \gamma,\\
(\alpha \lhd \beta) \rightarrow ((\alpha \leftarrow \beta) \lhd \gamma)=&\ \alpha \lhd (\beta \rightarrow \gamma),\\
(\alpha \lhd \beta) \rhd ((\alpha \leftarrow \beta) \lhd \gamma)=&\ \beta \rhd \gamma,\\
(\alpha \rhd (\beta \rightarrow \gamma)) \leftarrow (\beta \rhd \gamma)=&\ (\alpha \leftarrow \beta) \rhd \gamma,\\
(\alpha \rhd (\beta \rightarrow \gamma)) \lhd (\beta \rhd \gamma)=&\ \alpha \lhd \beta,\\
(\alpha \rhd (\beta \rightarrow \gamma)) \rightarrow (\beta \rhd \gamma)=&\ (\alpha \rightarrow \beta) \rhd \gamma,\\
\label{EQ10} (\alpha \rhd (\beta \rightarrow \gamma)) \rhd (\beta \rhd \gamma)=&\ \alpha \rhd \beta,
\end{align}
for all $\alpha,\beta, \gamma \in \Omega$.
\end{defn}

We shall use here the notion of $\lambda$-extended triassociative semigroup, where a family of scalars plays the role
of weights.

\begin{defn}\label{def:lambdaETS}
An {\bf $\lambda$-extended triasssociative semigroup}  (abbr. $\lambda$-$\ets$) is a family $(\Omega, \leftarrow, \rightarrow, \lhd,\rhd, \cdot, \ast, \lambda)$, where $(\Omega, \leftarrow, \rightarrow, \lhd, \rhd)$ is an EDS and
$\lambda=(\lambda_{\alpha,\beta})_{\alpha,\beta \in \Omega}$ is a family of elements in ${\bf k}$ indexed by $\Omega^2$ such that
\begin{align}
\label{EQ11}\lambda_{\alpha \rightarrow \beta, \gamma}=&\ \lambda_{\beta, \gamma}\\
\label{EQ12}\lambda_{\alpha \lhd \beta, (\alpha \leftarrow \beta) \lhd \gamma}=&\ \lambda_{\beta, \gamma}\\
\label{EQ13}\lambda_{\alpha \leftarrow \beta, \gamma}=&\ \lambda_{\alpha, \beta \rightarrow \gamma}\\
\label{EQ14}\lambda_{\alpha \rhd (\beta \rightarrow \gamma),\beta \rhd \gamma}=&\ \lambda_{\alpha, \beta}\\
\label{EQ15}\lambda_{\alpha,\beta}=&\ \lambda_{\alpha,\beta \leftarrow \gamma}\\
\label{EQ16} \lambda_{\alpha,\beta} \lambda_{\alpha \cdot \beta, \gamma}=&\ \lambda_{\beta,\gamma}
\lambda_{\alpha, \beta \cdot \gamma}
\end{align}
and, for all $\alpha, \beta, \gamma \in \Omega$:
\begin{enumerate}
\item If $\lambda_{\alpha \rightarrow \beta, \gamma}= \lambda_{\beta, \gamma}\neq 0$, then
\begin{align}
\label{eq17} \alpha \rhd \beta=&\ \alpha \rhd (\beta \cdot \gamma),\\
\label{eq18} (\alpha \rightarrow \beta) \cdot \gamma=&\ \alpha \rightarrow (\beta \cdot \gamma).
\end{align}
\item If $\lambda_{\alpha \lhd \beta, (\alpha \leftarrow \beta) \lhd \gamma}= \lambda_{\beta, \gamma}\neq 0$,
then
\begin{align}
\label{eq19} (\alpha \lhd \beta) \cdot ((\alpha \leftarrow \beta) \lhd \gamma)=&\ \alpha \lhd (\beta \cdot \gamma),\\
\label{eq20} (\alpha \leftarrow \beta) \leftarrow \gamma=&\ \alpha \leftarrow (\beta \cdot \gamma).
\end{align}
\item If $\lambda_{\alpha \rhd (\beta \rightarrow \gamma),\beta \rhd \gamma}= \lambda_{\alpha, \beta}\neq 0$,
then
\begin{align}
\label{eq21}\alpha \rightarrow (\beta \rightarrow \gamma)=&\ (\alpha \cdot \beta) \rightarrow \gamma,\\
\label{eq22} (\alpha \rhd (\beta \rightarrow \gamma)) \cdot (\beta \rhd \gamma)=&\ (\alpha \cdot \beta) \rhd \gamma.
\end{align}
\item  If $\lambda_{\alpha \leftarrow \beta, \gamma}=\ \lambda_{\alpha, \beta \rightarrow \gamma}\neq 0$,
then
\begin{align}
\label{eq23} (\alpha \leftarrow \beta) \cdot \gamma=&\ \alpha \cdot (\beta \rightarrow \gamma),\\
\label{eq24} \alpha \lhd \beta=&\ \beta \rhd \gamma.
\end{align}
\item If $\lambda_{\alpha, \beta}= \lambda_{\alpha, \beta \leftarrow \gamma}\neq 0$,
then
\begin{align}
\label{eq25} (\alpha \cdot \beta) \lhd \gamma=&\ \beta \lhd \gamma,\\
\label{eq26} (\alpha \cdot \beta) \leftarrow \gamma=&\ \alpha \cdot (\beta \leftarrow \gamma).
\end{align}
\item If $\lambda_{\alpha, \beta} \lambda_{\alpha \cdot \beta, \gamma}
= \lambda_{\beta, \gamma} \lambda_{\alpha, \beta \cdot \gamma}\neq 0$, then
\begin{align}
\label{eq27} (\alpha \cdot \beta) \cdot \gamma=&\ \alpha \cdot (\beta \cdot \gamma).
\end{align}
\end{enumerate}
\label{def:leds}
\end{defn}

\begin{exam}\label{ex2.4}
\begin{enumerate}
\item Let $(\Omega,\leftarrow,\rightarrow,\lhd,\rhd)$ be an EDS.
If we put $\lambda_{\alpha,\beta}=0$ for any $\alpha,\beta \in \Omega$,
then for any product $\cdot$, $(\Omega,\leftarrow,\rightarrow,\lhd,\rhd,\cdot,\lambda)$
is a $\lambda$-$\ets$.
\item If for any $\alpha,\beta \in \Omega$,
\[\alpha \leftarrow \beta=\beta \rightarrow \alpha=\beta \lhd \alpha=\alpha \rhd \beta=\alpha,\]
then $(\Omega,\leftarrow,\rightarrow,\lhd,\rhd,\cdot,\lambda)$
is a $\lambda$-$\ets$ if, and only if,  the following map defines an associative product:
\[\psi_\cdot:\left\{\begin{array}{rcl}
\K\Omega\otimes \K\Omega&\longrightarrow&\K\Omega\\
\alpha\otimes \beta&\longrightarrow&\lambda_{\alpha,\beta} \alpha \cdot \beta.
\end{array}\right.\]
Indeed, for any $\alpha$, $\beta$, $\gamma \in \Omega$,
\begin{align*}
\psi_\cdot\circ (\psi_\cdot\otimes \id)(\alpha \otimes \beta \otimes \gamma)
&=\lambda_{\alpha,\beta}\lambda_{\alpha\cdot \beta,\gamma}(\alpha \cdot \beta)\cdot \gamma,\\
\psi_\cdot\circ (\id \otimes \psi_\cdot)(\alpha \otimes \beta \otimes \gamma)
&=\lambda_{\beta,\gamma}\lambda_{\alpha, \beta \cdot\gamma}\alpha \cdot (\beta\cdot \gamma),
\end{align*}
which gives the missing condition (\ref{eq27}).
\item Let $(\Omega,\star)$ be a semigroup and $\lambda \in \K$. We put, for any $\alpha,\beta\in \Omega$:
\begin{align*}
\alpha \leftarrow \beta&=\alpha\star \beta,&\alpha \lhd \beta&=\beta,\\
\alpha \rightarrow \beta&=\alpha \star \beta,&\alpha \rhd \beta&=\alpha,\\
\lambda_{\alpha,\beta}&=\lambda,&\alpha\cdot \beta&=\alpha \star \beta.
\end{align*}
Then $(\Omega,\leftarrow,\rightarrow,\lhd,\rhd,\cdot,\lambda)$
is a $\lambda$-$\ets$.
\item Let $(\Omega,\star)$ be an abelian group and let $\lambda \in \K$. For any $\alpha,\beta \in \Omega$, we put:
\begin{align*}
\alpha \leftarrow \beta&=\alpha,&\alpha \rightarrow \beta&,\beta,\\
\alpha \lhd \beta&=\alpha \star \beta^{\star -1},&
\alpha \rhd \beta&=\alpha^{\star -1}\star \beta,\\
\lambda_{\alpha,\beta}&=\lambda ,&\alpha \cdot \beta&=\alpha.
\end{align*}
Then $(\Omega,\leftarrow,\rightarrow,\lhd,\rhd,\cdot,\lambda)$ is a $\lambda$-$\ets$.
\item Let $\Omega=(\Omega,\leftarrow,\rightarrow,\lhd,\rhd,\cdot,\lambda)$ be a $\lambda$-$\ets$.
For any $\alpha,\beta \in \Omega$, we put
\begin{align*}
\alpha \leftarrow^{op}\beta&=\beta \rightarrow \alpha,&
\alpha \lhd^{op}\alpha&=\beta \rhd \alpha,\\
\alpha \rightarrow^{op}\beta&=\beta \leftarrow \alpha,&
\alpha \rhd^{op}\alpha&=\beta \lhd \alpha,\\
\alpha \cdot^{op}\beta&=\beta \cdot \alpha,&
\lambda^{op}_{\alpha,\beta}&=\lambda_{\beta,\alpha}.
\end{align*}
Then $(\Omega,\leftarrow^{op},\rightarrow^{op},\lhd^{op},\rhd^{op},\cdot^{op},\lambda^{op})$
 is also a $\lambda$-$\ets$, called the \textbf{opposite} of $\Omega$ and denoted by $\Omega^{op}$.
 We shall say that $\Omega$ is commutative if it is equal to its opposite.
\end{enumerate}
\end{exam}

\begin{defn}
A {\bf extended triasssociative semigroup}  (abbr. $\ets$) is a family $(\Omega, \leftarrow, \rightarrow, \lhd,\rhd, \cdot, \ast)$, where $(\Omega, \leftarrow, \rightarrow, \lhd, \rhd)$ is an EDS and
\begin{align}
\label{EQ17}(\alpha \rightarrow \beta) \ast \gamma=&\ \beta \ast \gamma,\\
\tag{\ref{eq17}}\alpha \rhd \beta=&\ \alpha \rhd (\beta \cdot \gamma),\\
\tag{\ref{eq18}}(\alpha \rightarrow \beta) \cdot \gamma=&\ \alpha \rightarrow (\beta \cdot \gamma),\\
\label{EQ20}(\alpha \lhd \beta) \ast ((\alpha \leftarrow \beta) \lhd \gamma)=&\ \beta \ast \gamma,\\
\tag{\ref{eq19}}(\alpha \lhd \beta) \cdot ((\alpha \leftarrow \beta) \lhd \gamma)=&\ \alpha \lhd (\beta \cdot \gamma),\\
\tag{\ref{eq20}}(\alpha \leftarrow \beta) \leftarrow \gamma=&\ \alpha \leftarrow (\beta \cdot \gamma),\\
\label{EQ23}(\alpha \rhd (\beta \rightarrow \gamma))\ast (\beta \rhd \gamma)=&\ \alpha \ast \beta,\\
\tag{\ref{eq21}}\alpha \rightarrow (\beta \rightarrow \gamma)=&\ (\alpha \cdot \beta) \rightarrow \gamma,\\
\tag{\ref{eq22}}(\alpha \rhd (\beta \rightarrow \gamma)) \cdot (\beta \rhd \gamma)=&\ (\alpha \cdot \beta) \rhd \gamma,\\
\label{EQ26}(\alpha \leftarrow \beta)\ast \gamma=&\ \alpha\ast (\beta \rightarrow \gamma),\\
\tag{\ref{eq23}}(\alpha \leftarrow \beta) \cdot \gamma=&\ \alpha \cdot (\beta \rightarrow \gamma),\\
\tag{\ref{eq24}}\alpha \lhd \beta=&\ \beta \rhd \gamma,\\
\label{EQ29}\alpha\ast \beta=&\ \alpha\ast (\beta \leftarrow \gamma),\\
\tag{\ref{eq25}}(\alpha \cdot \beta) \lhd \gamma=&\ \beta \lhd \gamma,\\
\tag{\ref{eq26}}(\alpha \cdot \beta) \leftarrow \gamma=&\ \alpha \cdot (\beta \leftarrow \gamma),\\
\label{EQ32}\alpha \ast \beta=&\ \alpha \ast (\beta \cdot \gamma),\\
\label{EQ33}(\alpha \cdot \beta) \ast \gamma=&\ \beta \ast \gamma,\\
\tag{\ref{eq27}}(\alpha \cdot \beta) \cdot \gamma=&\ \alpha \cdot (\beta \cdot \gamma).
\end{align}
\end{defn}

\begin{exam}
\begin{enumerate}
\item Let $(\Omega,\ast,\cdot)$ be a set with two products such that for any $\alpha,\beta,\gamma \in \Omega$:
\begin{align}
\label{eq32} \alpha \ast \beta=&\ \alpha \ast (\beta \cdot \gamma),\\
\label{eq33} (\alpha \cdot \beta) \ast \gamma=&\ \beta \ast \gamma,\\
\label{eq34} (\alpha \cdot \beta) \cdot \gamma=&\ \alpha \cdot (\beta \cdot \gamma).
\end{align}
We put, for any $\alpha,\beta \in \Omega$:
\begin{align*}
\alpha \leftarrow \beta&=\alpha,&\alpha \lhd \beta&=\beta,\\
\alpha \rightarrow \beta&=\alpha,&\alpha \rhd \beta&=\beta.
\end{align*}
Then $(\Omega, \leftarrow, \rightarrow, \lhd,\rhd, \cdot, \ast)$ is an $\ets$.
\item Let $(\Omega, \leftarrow, \rightarrow, \lhd,\rhd, \cdot, \ast)$ be an $\ets$.
For any $\alpha,\beta \in \Omega$, we put
\begin{align*}
\alpha \leftarrow^{op}\beta&=\beta \rightarrow \alpha,&
\alpha \lhd^{op}\alpha&=\beta \rhd \alpha,\\
\alpha \rightarrow^{op}\beta&=\beta \leftarrow \alpha,&
\alpha \rhd^{op}\alpha&=\beta \lhd \alpha,\\
\alpha \ast^{op}\beta&=\beta\ast\alpha,&
\alpha \cdot^{op}\beta&=\beta \cdot \alpha.
\end{align*}
Then $(\Omega,\leftarrow^{op},\rightarrow^{op},\lhd^{op},\rhd^{op},\ast^{op},\cdot^{op})$
 is also an $\ets$, called the opposite of $\Omega$. We shall say that $\Omega$ is commutative if it is equal to its opposite.
\end{enumerate}
\end{exam}

Actually, each $\ets$ induces a $\lambda$-$\ets$, as the following result indicates:

\begin{prop}
Let $(\Omega, \leftarrow, \rightarrow, \lhd,\rhd, \cdot, \ast)$ be an $\ets$ and let $(\mu_{\alpha})_{\alpha \in \Omega}$
be a family of scalars. For any $\alpha,\beta \in \Omega$, we put:
\[\lambda_{\alpha,\beta}=\mu_{\alpha \ast \beta}.\]
Then  $(\Omega, \leftarrow, \rightarrow, \lhd,\rhd, \cdot, \lambda)$ is a $\lambda$-$\ets$.
\end{prop}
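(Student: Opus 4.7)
The plan is to check, item by item, each requirement of Definition~\ref{def:leds} under the substitution $\lambda_{\alpha,\beta}=\mu_{\alpha\ast\beta}$. The EDS part \eqref{EQ1}--\eqref{EQ10} is automatic since an $\ets$ contains an EDS by definition, so only the scalar conditions \eqref{EQ11}--\eqref{EQ16} and the conditional clauses (1)--(6) remain.

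For \eqref{EQ11}--\eqref{EQ15}, after substituting the definition of $\lambda$, each identity reduces to an equality of the form $\mu_{x}=\mu_{y}$ where $x=y$ is precisely one of the unconditional $\ast$-axioms of the $\ets$. Concretely, \eqref{EQ11} reduces to \eqref{EQ17}, \eqref{EQ12} to \eqref{EQ20}, \eqref{EQ13} to \eqref{EQ26}, \eqref{EQ14} to \eqref{EQ23}, and \eqref{EQ15} to \eqref{EQ29}. Identity \eqref{EQ16} is the only one involving a product of two scalars; the plan is to rewrite
\[
\lambda_{\alpha,\beta}\,\lambda_{\alpha\cdot\beta,\gamma}
=\mu_{\alpha\ast\beta}\,\mu_{(\alpha\cdot\beta)\ast\gamma}
\xlongequal{\eqref{EQ33}}
\mu_{\alpha\ast\beta}\,\mu_{\beta\ast\gamma}
\xlongequal{\eqref{EQ32}}
\mu_{\alpha\ast(\beta\cdot\gamma)}\,\mu_{\beta\ast\gamma}
=\lambda_{\alpha,\beta\cdot\gamma}\,\lambda_{\beta,\gamma},
\]
so that \eqref{EQ32} and \eqref{EQ33} together produce the required equality.

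For the six conditional clauses (1)--(6) of Definition~\ref{def:leds}, the conclusions are exactly the equations \eqref{eq17}--\eqref{eq27}, and each of these is listed as an unconditional axiom of the $\ets$ in the defining display. Hence the implications hold whether or not their hypotheses are satisfied; this part of the verification is free.

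There is no real obstacle here: the $\lambda$-$\ets$ axioms were evidently tailored so that the $\ast$-axioms of an $\ets$ encode exactly the weight identities under the substitution $\lambda=\mu\circ\ast$, and the unconditional versions of \eqref{eq17}--\eqref{eq27} in the $\ets$ subsume all six conditional implications. The only spot requiring the smallest amount of work is \eqref{EQ16}, where both \eqref{EQ32} and \eqref{EQ33} must be applied; everything else is a one-line dictionary lookup between the two definitions.
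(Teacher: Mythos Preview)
Your proof is correct and follows essentially the same approach as the paper's own argument: the paper likewise dispatches the conditional clauses (a)--(f) by observing that \eqref{eq17}--\eqref{eq27} hold unconditionally in an $\ets$, matches \eqref{EQ11}--\eqref{EQ15} to \eqref{EQ17}, \eqref{EQ20}, \eqref{EQ26}, \eqref{EQ23}, \eqref{EQ29} respectively, and derives \eqref{EQ16} from \eqref{EQ32} and \eqref{EQ33}. Your version is slightly more explicit (especially the chain for \eqref{EQ16}), but the content is identical.
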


\begin{proof}
Conditions (a)-(f) of Definition \ref{def:lambdaETS} are obviously satisfied by (\ref{eq17})-(\ref{eq27}).
(\ref{EQ11}) is (\ref{EQ17}),     (\ref{EQ12}) is (\ref{EQ20}), (\ref{EQ13}) is (\ref{EQ26}),
(\ref{EQ14}) is (\ref{EQ23}), (\ref{EQ15}) is (\ref{EQ29}), and (\ref{EQ16}) comes from (\ref{EQ32}) and (\ref{EQ33}).
\end{proof}

We now propose the concept of $\Omega$-Rota-Baxter algebras as follows:
\begin{defn}\label{def:orb}
Let $\Omega$ be a set with five products $\leftarrow, \rightarrow, \lhd, \rhd, \cdot$ and
$\lambda=(\lambda_{\alpha,\beta})_{\alpha,\beta \in \Omega}$ be a family of elements in ${\bf k}$ indexed by $\Omega^2$.
An {\bf $\Omega$-Rota-Baxter algebra} of weight $\lambda$ is a family $(A, (P_{\omega})_{\omega \in \Omega})$ where $A$ is an associative algebra and $P_{\omega}: A \otimes A \rightarrow A$ is a linear map for each $\omega \in \Omega$, such that
\begin{align*}
P_{\alpha}(a)P_{\beta}(b)=P_{\alpha \rightarrow \beta}(P_{\alpha \rhd \beta}(a)b)+P_{\alpha \leftarrow \beta}(a P_{\alpha \lhd \beta}(b))+ \lambda_{\alpha, \beta} P_{\alpha \cdot \beta} (ab),
\end{align*}
for all $a, b \in A$ and $\alpha, \beta \in \Omega$. If, further, $A$ is commutative, then $(A, (P_{\omega})_{\omega \in \Omega})$ is a {\bf commutative $\Omega$-Rota-Baxter algebra}.
\end{defn}

Taking all elements of $\lambda$ equal to 0, we get the concept of $\Omega$-Rota-Baxter algebras of weight 0:
\begin{defn}
Let $\Omega$ be a set with four products $\leftarrow, \rightarrow, \lhd, \rhd$. An {\bf $\Omega$-Rota-Baxter algebra of weight} 0 is a family $(A, (P_{\omega})_{\omega \in \Omega})$ where $A$ is an associative algebra and $P_{\omega}: A \otimes A \rightarrow A$ is a linear map for each $\omega \in \Omega$, such that
\begin{align*}
P_{\alpha}(a)P_{\beta}(b)=P_{\alpha \rightarrow \beta}(P_{\alpha \rhd \beta}(a)b)+P_{\alpha \leftarrow \beta}(a P_{\alpha \lhd \beta}(b)),
\end{align*}
for all $a, b \in A$ and $\alpha, \beta \in \Omega$.
\end{defn}

\begin{exam}
\begin{enumerate}
\item If $(\Omega, \star)$ is a semigroup, we recover the definition of Rota-Baxter family algebras~\mcite{Guo09, ZG19} by defining
\begin{align*}
\alpha \leftarrow \beta &= \alpha \rightarrow \beta =\alpha \cdot \beta= \alpha \star\beta, &
\alpha \rhd \beta&=\alpha, &\alpha \lhd \beta&=\beta,
\end{align*}
and requiring all elements of $\lambda$ to be equal. Note that this is the $\lambda$-$\ets$ of Example \ref{ex2.4} (c).

\item For a set $\Omega$, define
\begin{align*}
\alpha \rightarrow \beta&=\alpha \lhd \beta=\beta, &\alpha \rhd \beta&=\alpha \leftarrow \beta=\alpha \cdot \beta=\alpha,
\end{align*}
and $\lambda_{\alpha,\beta}=\lambda_\alpha$, for a family $(\lambda_{\alpha})_{\alpha \in \Omega}$
of elements of ${\bf k }$.
Then we get the concept of matching Rota-Baxter algebra ~\mcite{ZGG20}, up to the change of the product of $A$
into its opposite.
\end{enumerate}
\end{exam}

As we know, Rota-Baxter algebras of weight 0 induce dendriform algebras \cite{EFG082}.
Similarly, we can show that each $\Omega$-Rota-Baxter algebra of weight 0 has a structure of an $\Omega$-dendriform algebra~\cite[definition 11]{Foi20}:

\begin{prop} \label{propdend}
Let $\Omega$ be a set with four products $\leftarrow, \rightarrow, \lhd, \rhd$ and $(A, (P_{\omega})_{\omega \in \Omega})$ an $\Omega$-Rota-Baxter algebra of weight 0. Then $(A, (\prec_{\omega})_{\omega \in \Omega}, (\succ_{\omega})_{\omega \in \Omega})$ is an $\Omega$-dendriform algebra, where
\begin{align*}
a \prec_{\omega} b&:=aP_{\omega}(b), & a \succ_{\omega} b&:=P_{\omega}(a)b,
\end{align*}
for all $a, b \in A$ and $\omega \in \Omega$.
\end{prop}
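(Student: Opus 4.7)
The plan is to verify directly the three defining axioms of an $\Omega$-dendriform algebra of \cite[Definition~11]{Foi20}, namely
\begin{align*}
(a \prec_\alpha b) \prec_\beta c &= a \prec_{\alpha \leftarrow \beta} (b \prec_{\alpha \lhd \beta} c) + a \prec_{\alpha \rightarrow \beta} (b \succ_{\alpha \rhd \beta} c),\\
(a \succ_\alpha b) \prec_\beta c &= a \succ_\alpha (b \prec_\beta c),\\
a \succ_\alpha (b \succ_\beta c) &= (a \succ_{\alpha \rhd \beta} b) \succ_{\alpha \rightarrow \beta} c + (a \prec_{\alpha \lhd \beta} b) \succ_{\alpha \leftarrow \beta} c,
\end{align*}
using only associativity of $A$ and the weight-zero $\Omega$-Rota-Baxter relation. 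No further hypothesis on $\Omega$ is needed, since the indices on the right are produced by the same four products that appear on the left.

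For the first axiom I would compute $(a \prec_\alpha b)\prec_\beta c = (a P_\alpha(b)) P_\beta(c)$, use associativity of $A$ to regroup this as $a\bigl(P_\alpha(b) P_\beta(c)\bigr)$, expand $P_\alpha(b) P_\beta(c)$ by the weight-zero Rota-Baxter relation, and then reinterpret $a P_{\alpha\leftarrow\beta}(\,\cdot\,)$ and $a P_{\alpha\rightarrow\beta}(\,\cdot\,)$ in terms of $\prec$, together with the factors $b P_{\alpha\lhd\beta}(c) = b\prec_{\alpha\lhd\beta} c$ and $P_{\alpha\rhd\beta}(b)\, c = b\succ_{\alpha\rhd\beta} c$. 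The third axiom is symmetric: start from $a\succ_\alpha (b\succ_\beta c) = P_\alpha(a)\bigl(P_\beta(b) c\bigr)$, reassociate as $\bigl(P_\alpha(a) P_\beta(b)\bigr) c$, expand by the Rota-Baxter relation, and repackage. The middle axiom is the easiest: it follows instantly from associativity applied to $(P_\alpha(a) b)P_\beta(c) = P_\alpha(a)(b P_\beta(c))$, with no use of the Rota-Baxter relation at all.

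There is no real obstacle here; the entire proof is bookkeeping. The only thing to be careful about is matching the indices on the right-hand sides of the two nontrivial axioms with the index assignments in \cite[Definition~11]{Foi20}, since the four operations $\leftarrow,\rightarrow,\lhd,\rhd$ attach to the two sides of the Rota-Baxter relation in exactly the pattern prescribed there. Once this is checked, the proposition is immediate.
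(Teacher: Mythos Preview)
Your proposal is correct and follows essentially the same approach as the paper: the paper's proof consists precisely of the three direct computations you outline, using associativity of $A$ and the weight-zero $\Omega$-Rota-Baxter relation to verify each $\Omega$-dendriform axiom.
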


\begin{proof}
For $a,b,c \in A$ and $\alpha, \beta \in \Omega$,
\begin{align*}
(a \prec_{\alpha} b) \prec_{\beta} c=&\ \big(aP_{\alpha}(b)\big)P_{\beta}(c)=a\big(P_{\alpha}(b)P_{\beta}(c)\big)=a\big(P_{\alpha \rightarrow \beta}(P_{\alpha \rhd \beta}(b)c)+ P_{\alpha \leftarrow \beta} (b P_{\alpha \lhd \beta}(c))\big)\\
=&\ aP_{\alpha \rightarrow \beta}(P_{\alpha \rhd \beta}(b)c)+ aP_{\alpha \leftarrow \beta} (b P_{\alpha \lhd \beta}(c))=a \prec_{\alpha \rightarrow \beta} (b \succ_{\alpha \rhd \beta} c)+ a\prec_{\alpha \leftarrow \beta} (b \prec_{\alpha \lhd \beta} c),\\
a\succ_{\alpha}(b \prec_{\beta} c)=&\ P_{\alpha}(a)\big(bP_{\beta}(c)\big)=\big(P_{\alpha}(a)b\big)P_{\beta}(c)=(a \succ_{\alpha} b) \prec_{\beta} c,\\
a \succ_{\alpha} (b \succ_{\beta} c)=&\ P_{\alpha}(a) \big(P_{\beta}(b)c \big)=\big(P_{\alpha}(a)P_{\beta}(b)\big)c=\big(P_{\alpha \rightarrow \beta} (P_{\alpha \rhd \beta}(a)b)+ P_{\alpha \leftarrow \beta} (a P_{\alpha \lhd \beta}(b)) \big)c\\
=&\ P_{\alpha \rightarrow \beta} (P_{\alpha \rhd \beta}(a)b)c+P_{\alpha \leftarrow \beta} (a P_{\alpha \lhd \beta}(b)c=(a \succ_{\alpha \rhd \beta} b) \succ_{\alpha \rightarrow \beta} c+(a \prec_{\alpha \lhd \beta} b) \succ_{\alpha \leftarrow \beta} c.
\qedhere \end{align*}
\end{proof}

\subsection{$\Omega$-Rota-Baxter algebras on typed angularly decorated planar rooted trees}
First, let us recall some notations on planar rooted trees (see ~\mcite{ZGM} for more details).
For a planar rooted tree $T$, we shall consider the root and the leaves of $T$ as edges rather than vertices. Denote by $IE(T)$ the set of internal edges of $T$, i.e. edges which are neither leaves nor the root and denote by $V(T)$ the set of vertices of $T$. For each vertex $v$ yields a (possibly empty) set of angles $A(v)$, an angle being a pair $(e, e')$ of adjacent incoming edges for v.
Let $\displaystyle A(T)=\bigsqcup_{v \in V(T)}A(v)$ be the set of angles of $T$. Then:

\begin{defn}\cite[Definition~2.2]{ZGM}
Let $X$ and $\Omega$ be two sets. An $X$-{\bf angularly decorated $\Omega$-typed} (abbr. {\bf typed angularly decorated}) {\bf planar rooted tree} is a triple $T = (T, \dec, \type)$, where $T$ is a planar rooted tree,  $\dec : A(T) \rightarrow X$ and $\type : IE(T) \rightarrow  \Omega$ are maps.
\end{defn}

 For $n\geq 0$, let $\caltn$ denote the set of  $X$-angularly decorated $\Omega$-typed planar rooted trees with $n+1$ leaves and at least one internal vertex such that internal edges are decorated by elements of $\Omega$. We put
\begin{align*}
\calt&:= \bigsqcup_{n\geq 0}\caltn&&\text{ and }&\bfk\calt& :=\bigoplus_{n\geq 0}\bfk\caltn.
\end{align*}
For example,
\begin{align*}
\calta=& \left\{\treeoo{\cda[1.5] o\zhd{o/a}
\node[right] at ($(o)!0.7!(oa)$) {};
},\,
\treeoo{\cda[1.0] o\zhd{o/a}\cdb[1.0]o\zhd{o/b}
\node[right] at ($(o)!0.7!(oa)$) {};
\node[right] at ($(o)!0.2!(ob)$) {$\alpha$};
},\,
\treeoo{\cda[1.0] o\zhd{o/a}\cdb[1.0]o\zhd{o/b}
\node[right] at ($(o)!0.7!(oa)$) {};
\node[right] at ($(o)!0.2!(ob)$) {$\alpha$};
\cdb{ob}\zhd{ob/b}
\node[right] at ($(ob)!0.1!(obb)$) {$\beta$};
},\,
\ldots\big|\alpha,\beta,\cdots\in\Omega\right\},\,\,\\
\caltb =&\left\{\stree x,\,\treeoo{\cdb[1.5] o\zhd{o/b}\cdlr o
\node[above=1pt] at (o) {$x$};
\node[right] at ($(o)!0.3!(ob)$) {$\alpha$};
},\,
\treeoo{\cdb[1.5] o\zhd{o/b}\cdlr[1.5] o \zhd{o/l}
\node[above=1pt] at (o) {$x$};
\node[right] at ($(o)!0.3!(ob)$) {$\beta$};
\node[left] at ($(o)!0.1!(ol)$) {$\alpha$};
},\,
\treeoo{\cdb[1.5] o\zhd{o/b}\cdlr[1.5] o \zhd{o/l}\zhd{o/r}
\node[above=1pt] at (o) {$x$};
\node[right] at ($(o)!0.3!(ob)$) {$\gamma$};
\node[left] at ($(o)!0.1!(ol)$) {$\alpha$};
\node[right] at ($(o)!0.1!(or)$) {$\beta$};
},\,
\treeoo{\cdb[1.5] o\zhd{o/b}\cdlr[1.5] o 
\node at ($(o)!0.33!(ol)$) {\usebox\dbox};
\node at ($(o)!0.66!(ol)$) {\usebox\dbox};
\node[above=1pt] at (o) {$x$};
\node[right] at ($(o)!0.3!(ob)$) {$\gamma$};
\node[left] at ($(o)!0.01!(ol)$) {$\alpha$};
\node[left] at ($(o)!0.3!(ol)$) {$\beta$};
},\,
\treeoo{\cdb[1.5] o\zhd{o/b}\cdlr o
\node[above=1pt] at (o) {$x$};
\node[right] at ($(o)!0.3!(ob)$) {$\beta$};
\cdb{ob}\zhd{ob/b}
\node[right] at ($(ob)!0.1!(obb)$) {$\alpha$};
},
\ldots\big| x\in X,\alpha,\beta,\gamma,\cdots\in\Omega \right\},\\
 \caltc=& \ \left\{
\treeoo{\cdb o\zhd{o/b}\cdlr{o,ol}[1.5]\zhd{o/r}
\node[above=1pt] at (ol) {$x$};
\node[above=1pt] at (o) {$y$};
\node[right] at ($(o)!0.3!(ob)$) {$\gamma$};
\node[left=1pt] at ($(o)!0.3!(ol)$) {$\beta$};
\node[right] at ($(o)!0.1!(or)$) {$\alpha$};
},
\treeoo{\cdb o\cdlr{o,or}
\node[above=1pt] at (or) {$y$};
\node[above=1pt] at (o) {$x$};
\node[right] at ($(o)!0.3!(or)$) {$\alpha$};
},
\treeoo{\cdb o\zhd{o/b}\cdlr{o,or}
\node[above=1pt] at (or) {$y$};
\node[above=1pt] at (o) {$x$};
\node[right] at ($(o)!0.3!(ob)$) {$\beta$};
\node[right] at ($(o)!0.3!(or)$) {$\alpha$};
},
\treeoo{\cdb o\cda o
\cdx[1.5]{o}{ol}{150}\cdx[1.5]{o}{or}{30}\zhd{o/r}
\node[left] at ($(o)!0.5!(oa)$) {$x$};
\node[right] at ($(o)!0.5!(oa)$) {$y$};
\node[right] at ($(o)!0.1!(or)$) {$\alpha$};
},
\treeoo{\cdb o\cda o\zhd{o/b}
\cdx{o}{ol}{150}\cdx{o}{or}{30}
\node[left] at ($(o)!0.5!(oa)$) {$x$};
\node[right] at ($(o)!0.5!(oa)$) {$y$};
\node[right] at ($(o)!0.1!(ob)$) {$\alpha$};
},\,
\treeoo{\cdb o\cda o\zhd{o/b}
\cdx{o}{ol}{150}\cdx{o}{or}{30}
\node[left] at ($(o)!0.5!(oa)$) {$x$};
\node[right] at ($(o)!0.5!(oa)$) {$y$};
\node[right] at ($(o)!0.1!(ob)$) {$\beta$};
\cdb{ob}\zhd{ob/b}
\node[right] at ($(ob)!0.1!(obb)$) {$\alpha$};
},
\ldots
\big| x,y\in X, \alpha,\beta,\gamma,\cdots\in\Omega\right\},\\
\end{align*}
Graphically, an element $T\in \cxo T;$ is of the form:
\begin{equation*}
T = \treeoo{\cdb o\ocdx[2]{o}{a1}{160}{T_1}{left}
\ocdx[2]{o}{a2}{120}{T_2}{above}
\ocdx[2]{o}{a3}{60}{T_n}{above}
\ocdx[2]{o}{a4}{20}{T_{n+1}}{right}
\node at (140:\xch) {$x_1$};
\node at (90:\xch) {$\cdots$};
\node at (40:\xch) {$x_n$};
\node[below] at ($(o)!0.7!(a1)$) {$\alpha_1$};
\node[below] at ($(o)!0.7!(a4)$) {$\alpha_{n+1}$};
\node[right] at ($(o)!0.85!(a2)$) {$\alpha_2$};
\node[left] at ($(o)!0.85!(a3)$) {$\alpha_n\!$};
},\,\text{ with } n\geq 0, \text{ where }\, x_1,\cdots,x_n\in X,\, \,\alpha_i \in \Omega\, \text{ if $T_i\neq |$ and otherwise}
\end{equation*}
$\alpha_i$ does not exist for $1 \leq i \leq n+1$.

For each $\omega\in\Omega$, there is a grafting operator $B^{+}_{\omega}: \bfk \calt \ra \bfk \calt$ which add a new root to a tree and an new internal edge typed by $\omega$ between the new root and the root of the tree.

For example,
\begin{align*}
B^+_\omega\Big(\treeoo{\cda[1.0] o\zhd{o/a}\cdb[1.0]o\zhd{o/b}
\node[right] at ($(o)!0.7!(oa)$) {};
\node[right] at ($(o)!0.15!(ob)$) {$\alpha$};
}  \Big)&=\treeoo{\cda[1.0] o\zhd{o/a}\cdb[1.0]o\zhd{o/b}
\node[right] at ($(o)!0.7!(oa)$) {};
\node[right] at ($(o)!0.2!(ob)$) {$\alpha$};
\cdb{ob}\zhd{ob/b}
\node[right] at ($(ob)!0.1!(obb)$) {$\omega$};
}
,&B^{+}_\omega\Big(\treeoo{\cdb[1.5] o\zhd{o/b}\cdlr o
\node[above=1pt] at (o) {$x$};
\node[left] at ($(o)!0.3!(ob)$) {$\alpha$};
} \Big)&=\treeoo{\cdb[1.5] o\zhd{o/b}\cdlr o
\node[above=1pt] at (o) {$x$};
\node[right] at ($(o)!0.3!(ob)$) {$\alpha$};
\cdb{ob}\zhd{ob/b}
\node[right] at ($(ob)!0.1!(obb)$) {$\omega$};
}.
\end{align*}
The {\bf depth} $\dep{(T)}$ of a rooted tree $T$ is the maximal length of linear chains from the root to the leaves of the tree.
For example,
\begin{align*}
\dep\big(\treeoo{\cda[1.5] o\zhd{o/a}
\node[right] at ($(o)!0.7!(oa)$) {};
}\Big) &= \dep\Big(\stree x\Big) = 1&&\text{ and }& \dep \Big(\treeoo{\cda[1.0] o\zhd{o/a}\cdb[1.0]o\zhd{o/b}
\node[right] at ($(o)!0.7!(oa)$) {};
\node[right] at ($(o)!0.15!(ob)$) {$\omega$};
} \Big)=\dep\Big(\treeoo{\cdb[1.5] o\zhd{o/b}\cda o\cdb o
\cdx{o}{ol}{150}\cdx{o}{or}{30}
\node[left] at ($(o)!0.5!(oa)$) {$x$};
\node[right] at ($(o)!0.5!(oa)$) {$y$};
\node[left] at ($(o)!0.3!(ob)$) {$\alpha$};
}\Big) &= 2.
\end{align*}
We also consider the trivial tree $\vert$ and put by convention $\dep(\vert) :=0$.
For each typed angularly decorated planar rooted tree $T$, define the number of branches of $T$ to be $\bra(T)=0$ if $T=\vert$. Otherwise, $\dep(T)\geq 1$ and $T$ is of the form
\[T = \treeoo{\cdb o\ocdx[2]{o}{a1}{160}{T_1}{left}
\ocdx[2]{o}{a2}{120}{T_2}{above}
\ocdx[2]{o}{a3}{60}{T_n}{above}
\ocdx[2]{o}{a4}{20}{T_{n+1}}{right}
\node at (140:\xch) {$x_1$};
\node at (90:\xch) {$\cdots$};
\node at (40:\xch) {$x_n$};
\node[below] at ($(o)!0.7!(a1)$) {$\alpha_1$};
\node[below] at ($(o)!0.7!(a4)$) {$\alpha_{n+1}$};
\node[right] at ($(o)!0.85!(a2)$) {$\alpha_2$};
\node[left] at ($(o)!0.85!(a3)$) {$\alpha_n\!$};
}\,\text{ with }\, n\geq 0, \]
where $T_j\in \calt\sqcup\{\vert\}, j=1,\ldots, n+1$. We define $\bra(T):=n+1$.
For example,
\begin{align*}
 \bra\Big(\treeoo{\cda[1.5] o\zhd{o/a}
\node[right] at ($(o)!0.7!(oa)$) {};
}\Big)&=1,&\bra\Big(\treeoo{\cdb o\cdlr{o,or}
\node[above=1pt] at (or) {$y$};
\node[above=1pt] at (o) {$x$};
\node[right] at ($(o)!0.3!(or)$) {$\alpha$};
}\Big) &= 2
&&\text{ and }&
\bra\Big(\treeoo{\cdb o\cda o
\cdx[1.5]{o}{ol}{150}\cdx[1.5]{o}{or}{30}\zhd{o/r}
\node[left] at ($(o)!0.5!(oa)$) {$x$};
\node[right] at ($(o)!0.5!(oa)$) {$y$};
\node[right] at ($(o)!0.1!(or)$) {$\alpha$};
}\Big) &= 3.
\end{align*}

Let $X$ be a set, $(\Omega,\leftarrow, \rightarrow, \lhd, \rhd, \cdot)$ be  a set with five products,
and $\lambda=(\lambda_{\alpha,\beta})_{(\alpha,\beta) \in \Omega^2}$ be a family of elements in $\bfk$ indexed by $\Omega^2$.
By analogy with the construction of free Rota-Baxter algebras, 
we define a product $\diamond$ on $\bfk \calt$ as follows.
For $T,T'\in\calt$, we define $T\diamond T'$ by induction on $\dep(T)+\dep(T')\geq 2$.
For the initial step $\dep(T)+\dep(T')=2$, we have $\dep(T)=\dep(T')=1$ and $T, T'$ are of the form
\begin{align*}
T &=\treeoo{\cdb o\cdx[1.5]{o}{a1}{160}
\cdx[1.5]{o}{a2}{120}
\cdx[1.5]{o}{a3}{60}
\cdx[1.5]{o}{a4}{20}
\node at (140:1.2*\xch) {$x_1$};
\node at (90:1.2*\xch) {$\cdots$};
\node at (40:1.2*\xch) {$x_m$};
}&&\text{ and }&T'&=\treeoo{\cdb o\cdx[1.5]{o}{a1}{160}
\cdx[1.5]{o}{a2}{120}
\cdx[1.5]{o}{a3}{60}
\cdx[1.5]{o}{a4}{20}
\node at (140:1.2*\xch) {$y_1$};
\node at (90:1.2*\xch) {$\cdots$};
\node at (40:1.2*\xch) {$y_n$};
},&&\text{ with }\,m,n\geq 0.
\end{align*}
Define
\begin{equation}
T\diamond T':=\treeoo{\cdb o\cdx[1.5]{o}{a1}{160}
\cdx[1.5]{o}{a2}{120}
\cdx[1.5]{o}{a3}{60}
\cdx[1.5]{o}{a4}{20}
\node at (140:1.2*\xch) {$x_1$};
\node at (90:1.2*\xch) {$\cdots$};
\node at (40:1.2*\xch) {$x_m$};
}\diamond\treeoo{\cdb o\cdx[1.5]{o}{a1}{160}
\cdx[1.5]{o}{a2}{120}
\cdx[1.5]{o}{a3}{60}
\cdx[1.5]{o}{a4}{20}
\node at (140:1.2*\xch) {$y_1$};
\node at (90:1.2*\xch) {$\cdots$};
\node at (40:1.2*\xch) {$y_n$};
}:=\treeoo{\cdb o
\cdx[2.2]{o}{a1}{170}
\cdx[2.2]{o}{a2}{145}
\cdx[2.2]{o}{a3}{115}
\cdx[2.2]{o}{a4}{90}
\cdx[2.2]{o}{a5}{65}
\cdx[2.2]{o}{a6}{35}
\cdx[2.2]{o}{a7}{10}
\node at ($(a1)!0.5!(a2)$) {$x_1$};
\node[rotate=40] at ($(a2)!0.5!(a3)$) {$\cdots$};
\node at ($(a3)!0.5!(a4)$) {$x_m$};
\node at ($(a4)!0.5!(a5)$) {$y_1$};
\node[rotate=-40] at ($(a5)!0.5!(a6)$) {$\cdots$};
\node at ($(a6)!0.5!(a7)$) {$y_n$};
}.
\mlabel{eq:def1}
\end{equation}
For the induction step $\dep(T)+\dep(T')\geq 3$,  the trees $T$ and $T'$ are of the form
$$T=\treeoo{\cdb o\ocdx[2]{o}{a1}{160}{T_1}{left}
\ocdx[2]{o}{a2}{120}{T_2}{above}
\ocdx[2]{o}{a3}{60}{T_m}{above}
\ocdx[2]{o}{a4}{20}{T_{m+1}}{right}
\node at (140:\xch) {$x_1$};
\node at (90:\xch) {$\cdots$};
\node at (40:\xch) {$x_m$};
\node[below] at ($(o)!0.7!(a1)$) {$\alpha_1$};
\node[below] at ($(o)!0.7!(a4)$) {$\alpha_{m+1}$};
\node[right] at ($(o)!0.85!(a2)$) {$\alpha_2$};
\node[left] at ($(o)!0.85!(a3)$) {$\alpha_m\!$};
}\,\text{ and }\, T'=\treeoo{\cdb o\ocdx[2]{o}{a1}{160}{T'_1}{left}
\ocdx[2]{o}{a2}{120}{T'_2}{above}
\ocdx[2]{o}{a3}{60}{T'_n}{above}
\ocdx[2]{o}{a4}{20}{T'_{n+1}}{right}
\node at (140:\xch) {$y_1$};
\node at (90:\xch) {$\cdots$};
\node at (40:\xch) {$y_n$};
\node[below] at ($(o)!0.7!(a1)$) {$\beta_1$};
\node[below] at ($(o)!0.7!(a4)$) {$\beta_{n+1}$};
\node[right] at ($(o)!0.85!(a2)$) {$\beta_2$};
\node[left] at ($(o)!0.85!(a3)$) {$\beta_n\!$};
}\,\text{ with some }\, T_i\neq |\,\text{ or some}\, T'_j\neq |.$$ There are four cases to consider.

\noindent{\bf Case 1:} $T_{m+1}=|=T'_1$. Define

\begin{equation}
 T\diamond T':= \treeoo{\cdb o\ocdx[2]{o}{a1}{160}{T_1}{left}
\ocdx[2]{o}{a2}{120}{T_2}{above}
\ocdx[2]{o}{a3}{60}{T_m}{above}
\cdx[2]{o}{a4}{20}
\node at (140:\xch) {$x_1$};
\node at (90:\xch) {$\cdots$};
\node at (40:\xch) {$x_m$};
\node[below] at ($(o)!0.7!(a1)$) {$\alpha_1$};
\node[right] at ($(o)!0.85!(a2)$) {$\alpha_2$};
\node[left] at ($(o)!0.85!(a3)$) {$\alpha_m\!$};
}\diamond \treeoo{\cdb o
\cdx[2]{o}{a1}{160}
\ocdx[2]{o}{a2}{120}{T'_2}{above}
\ocdx[2]{o}{a3}{60}{T'_n}{above}
\ocdx[2]{o}{a4}{20}{T'_{n+1}}{right}
\node at (140:\xch) {$y_1$};
\node at (90:\xch) {$\cdots$};
\node at (40:\xch) {$y_n$};
\node[below] at ($(o)!0.7!(a4)$) {$\beta_{n+1}$};
\node[right] at ($(o)!0.85!(a2)$) {$\beta_2$};
\node[left] at ($(o)!0.85!(a3)$) {$\beta_n\!$};
}
:=\treeoo{\cdb o
\ocdx[2.5]{o}{a1}{170}{T_1}{170}
\ocdx[2.5]{o}{a2}{145}{T_2}{145}
\ocdx[2.5]{o}{a3}{115}{T_m}{115}
\cdx[2.5]{o}{a4}{90}{-90}
\ocdx[2.5]{o}{a5}{65}{T'_2}{65}
\ocdx[2.5]{o}{a6}{35}{T'_n}{35}
\ocdx[2.5]{o}{a7}{10}{T'_{n+1}}{10}
\node at (157:1.3*\xch) {$x_1$};
\node[rotate=40] at (129:2.5*\xch) {$\cdots$};
\node at (102:1.5*\xch) {$x_m$};
\node at (75:1.5*\xch) {$y_1$};
\node[rotate=-40] at (49:2.5*\xch) {$\cdots$};
\node at (20:1.5*\xch) {$y_n$};
\node[below] at ($(o)!0.7!(a1)$) {$\alpha_1$};
\node[below] at ($(o)!0.8!(a7)$) {\tiny$\beta_{n+1}$};
\node[below] at ($(o)!0.9!(a2)$) {\tiny$\alpha_2$};
\node[left=-1pt] at ($(o)!0.65!(a3)$) {\tiny$\alpha_m$};
\node[right=1pt] at ($(o)!0.65!(a5)$) {\tiny$\beta_2$};
\node[right=1pt] at ($(o)!0.65!(a6)$) {\tiny$\beta_n$};
}.
\mlabel{eq:def3}
\end{equation}

\noindent{\bf Case 2:} $T_{m+1}\neq|=T'_1$. Define
\begin{equation}
T\diamond T':=\treeoo{\cdb o\ocdx[2]{o}{a1}{160}{T_1}{left}
\ocdx[2]{o}{a2}{120}{T_2}{above}
\ocdx[2]{o}{a3}{60}{T_m}{above}
\ocdx[2]{o}{a4}{20}{T_{m+1}}{right}
\node at (140:\xch) {$x_1$};
\node at (90:\xch) {$\cdots$};
\node at (40:\xch) {$x_m$};
\node[below] at ($(o)!0.7!(a1)$) {$\alpha_1$};
\node[below] at ($(o)!0.7!(a4)$) {$\alpha_{m+1}$};
\node[right] at ($(o)!0.85!(a2)$) {$\alpha_2$};
\node[left] at ($(o)!0.85!(a3)$) {$\alpha_m\!$};
}\diamond \treeoo{\cdb o
\cdx[2]{o}{a1}{160}
\ocdx[2]{o}{a2}{120}{T'_2}{above}
\ocdx[2]{o}{a3}{60}{T'_n}{above}
\ocdx[2]{o}{a4}{20}{T'_{n+1}}{right}
\node at (140:\xch) {$y_1$};
\node at (90:\xch) {$\cdots$};
\node at (40:\xch) {$y_n$};
\node[below] at ($(o)!0.7!(a4)$) {$\beta_{n+1}$};
\node[right] at ($(o)!0.85!(a2)$) {$\beta_2$};
\node[left] at ($(o)!0.85!(a3)$) {$\beta_n\!$};
}:=\treeoo{\cdb o
\ocdx[2.5]{o}{a1}{170}{T_1}{170}
\ocdx[2.5]{o}{a2}{145}{T_2}{145}
\ocdx[2.5]{o}{a3}{115}{T_m}{115}
\ocdx[2.5]{o}{a4}{90}{T_{m+1}}{90}
\ocdx[2.5]{o}{a5}{65}{T'_2}{65}
\ocdx[2.5]{o}{a6}{35}{T'_n}{35}
\ocdx[2.5]{o}{a7}{10}{T'_{n+1}}{10}
\node at (157:1.3*\xch) {$x_1$};
\node[rotate=40] at (129:2.5*\xch) {$\cdots$};
\node at (102:1.5*\xch) {$x_m$};
\node at (75:1.5*\xch) {$y_1$};
\node[rotate=-40] at (49:2.5*\xch) {$\cdots$};
\node at (20:1.5*\xch) {$y_n$};
\node[below] at ($(o)!0.7!(a1)$) {$\alpha_1$};
\node[below] at ($(o)!0.8!(a7)$) {\tiny$\beta_{n+1}$};
\node[below] at ($(o)!0.9!(a2)$) {\tiny$\alpha_2$};
\node[left=-1pt] at ($(o)!0.65!(a3)$) {\tiny$\alpha_m$};
\node[right=1pt] at ($(o)!0.65!(a5)$) {\tiny$\beta_2$};
\node[right=1pt] at ($(o)!0.65!(a6)$) {\tiny$\beta_n$};
\node[right] at ($(o)!0.8!(a4)$) {\tiny$\alpha_{m+1}$};
}.
\mlabel{eq:def4}
\end{equation}

\noindent{\bf Case 3:} $T_{m+1}=|\neq T'_1$. Define
\begin{equation}
 T\diamond T':=\treeoo{\cdb o\ocdx[2]{o}{a1}{160}{T_1}{left}
\ocdx[2]{o}{a2}{120}{T_2}{above}
\ocdx[2]{o}{a3}{60}{T_m}{above}
\cdx[2]{o}{a4}{20}
\node at (140:\xch) {$x_1$};
\node at (90:\xch) {$\cdots$};
\node at (40:\xch) {$x_m$};
\node[below] at ($(o)!0.7!(a1)$) {$\alpha_1$};
\node[right] at ($(o)!0.85!(a2)$) {$\alpha_2$};
\node[left] at ($(o)!0.85!(a3)$) {$\alpha_m\!$};
}\diamond \treeoo{\cdb o\ocdx[2]{o}{a1}{160}{T'_1}{left}
\ocdx[2]{o}{a2}{120}{T'_2}{above}
\ocdx[2]{o}{a3}{60}{T'_n}{above}
\ocdx[2]{o}{a4}{20}{T'_{n+1}}{right}
\node at (140:\xch) {$y_1$};
\node at (90:\xch) {$\cdots$};
\node at (40:\xch) {$y_n$};
\node[below] at ($(o)!0.7!(a1)$) {$\beta_1$};
\node[below] at ($(o)!0.7!(a4)$) {$\beta_{n+1}$};
\node[right] at ($(o)!0.85!(a2)$) {$\beta_2$};
\node[left] at ($(o)!0.85!(a3)$) {$\beta_n\!$};
}:=\treeoo{\cdb o
\ocdx[2.5]{o}{a1}{170}{T_1}{170}
\ocdx[2.5]{o}{a2}{145}{T_2}{145}
\ocdx[2.5]{o}{a3}{115}{T_m}{115}
\ocdx[2.5]{o}{a4}{90}{T'_1}{90}
\ocdx[2.5]{o}{a5}{65}{T'_2}{65}
\ocdx[2.5]{o}{a6}{35}{T'_n}{35}
\ocdx[2.5]{o}{a7}{10}{T'_{n+1}}{10}
\node at (157:1.3*\xch) {$x_1$};
\node[rotate=40] at (129:2.5*\xch) {$\cdots$};
\node at (102:1.5*\xch) {$x_m$};
\node at (75:1.5*\xch) {$y_1$};
\node[rotate=-40] at (49:2.5*\xch) {$\cdots$};
\node at (20:1.5*\xch) {$y_n$};
\node[below] at ($(o)!0.7!(a1)$) {$\alpha_1$};
\node[below] at ($(o)!0.8!(a7)$) {\tiny$\beta_{n+1}$};
\node[below] at ($(o)!0.9!(a2)$) {\tiny$\alpha_2$};
\node[left=-1pt] at ($(o)!0.65!(a3)$) {\tiny$\alpha_m$};
\node[right=1pt] at ($(o)!0.65!(a5)$) {\tiny$\beta_2$};
\node[right=1pt] at ($(o)!0.65!(a6)$) {\tiny$\beta_n$};
\node[right] at ($(o)!0.8!(a4)$) {\tiny$\beta_{1}$};
} .
\mlabel{eq:def5}
\end{equation}

\noindent{\bf Case 4:} $T_{m+1}\neq|\neq T'_1$. Define
{\small{\begin{align} \mlabel{eq:def6}
 T\diamond T'
:=& \ \treeoo{\cdb o\ocdx[2]{o}{a1}{160}{T_1}{left}
\ocdx[2]{o}{a2}{120}{T_2}{above}
\ocdx[2]{o}{a3}{60}{T_m}{above}
\ocdx[2]{o}{a4}{20}{T_{m+1}}{right}
\node at (140:\xch) {$x_1$};
\node at (90:\xch) {$\cdots$};
\node at (40:\xch) {$x_m$};
\node[below] at ($(o)!0.7!(a1)$) {$\alpha_1$};
\node[below] at ($(o)!0.7!(a4)$) {$\alpha_{m+1}$};
\node[right] at ($(o)!0.85!(a2)$) {$\alpha_2$};
\node[left] at ($(o)!0.85!(a3)$) {$\alpha_m\!$};
}\diamond \treeoo{\cdb o\ocdx[2]{o}{a1}{160}{T'_1}{left}
\ocdx[2]{o}{a2}{120}{T'_2}{above}
\ocdx[2]{o}{a3}{60}{T'_n}{above}
\ocdx[2]{o}{a4}{20}{T'_{n+1}}{right}
\node at (140:\xch) {$y_1$};
\node at (90:\xch) {$\cdots$};
\node at (40:\xch) {$y_n$};
\node[below] at ($(o)!0.7!(a1)$) {$\beta_1$};
\node[below] at ($(o)!0.7!(a4)$) {$\beta_{n+1}$};
\node[right] at ($(o)!0.85!(a2)$) {$\beta_2$};
\node[left] at ($(o)!0.85!(a3)$) {$\beta_n\!$};
}\nonumber\\
:=&\ \Biggl(\treeoo{\cdb o\ocdx[2]{o}{a1}{160}{T_1}{left}
\ocdx[2]{o}{a2}{120}{T_2}{above}
\ocdx[2]{o}{a3}{60}{T_m}{above}
\cdx[2]{o}{a4}{20}
\node at (140:\xch) {$x_1$};
\node at (90:\xch) {$\cdots$};
\node at (40:\xch) {$x_m$};
\node[below] at ($(o)!0.7!(a1)$) {$\alpha_1$};
\node[right] at ($(o)!0.85!(a2)$) {$\alpha_2$};
\node[left] at ($(o)!0.85!(a3)$) {$\alpha_m\!$};
}\diamond \Big(B^{+}_{\alpha_{m+1}}(T_{m+1})\diamond B^{+}_{\beta_1}(T'_1)\Big)\Biggl)\diamond
\treeoo{\cdb o\cdx[2]{o}{a1}{160}
\ocdx[2]{o}{a2}{120}{T'_2}{above}
\ocdx[2]{o}{a3}{60}{T'_n}{above}
\ocdx[2]{o}{a4}{20}{T'_{n+1}}{right}
\node at (140:\xch) {$y_1$};
\node at (90:\xch) {$\cdots$};
\node at (40:\xch) {$y_n$};
\node[below] at ($(o)!0.7!(a4)$) {$\beta_{n+1}$};
\node[right] at ($(o)!0.85!(a2)$) {$\beta_2$};
\node[left] at ($(o)!0.85!(a3)$) {$\beta_n\!$};
}\nonumber\\
:=& \ \Biggl(\treeoo{\cdb o\ocdx[2]{o}{a1}{160}{T_1}{left}
\ocdx[2]{o}{a2}{120}{T_2}{above}
\ocdx[2]{o}{a3}{60}{T_m}{above}
\cdx[2]{o}{a4}{20}
\node at (140:\xch) {$x_1$};
\node at (90:\xch) {$\cdots$};
\node at (40:\xch) {$x_m$};
\node[below] at ($(o)!0.7!(a1)$) {$\alpha_1$};
\node[right] at ($(o)!0.85!(a2)$) {$\alpha_2$};
\node[left] at ($(o)!0.85!(a3)$) {$\alpha_m\!$};
}\diamond \bigg(B^{+}_{\alpha_{m+1}\rightarrow \beta_1}\big(B^{+}_{\alpha_{m+1} \rhd \beta_1}(T_{m+1})\diamond T'_1 \big)
+B^{+}_{\alpha_{m+1} \leftarrow \beta_1} \big( T_{m+1} \diamond B^{+}_{\alpha_{m+1} \lhd \beta_1}(T'_1)\big)\\
&\ + \lambda_{\alpha_{m+1}, \beta_1} B^{+}_{\alpha_{m+1} \cdot \beta_1} \big(T_{m+1} \diamond T'_{1} \big) \bigg) \Biggl) \diamond\treeoo{\cdb o
\cdx[2]{o}{a1}{160}
\ocdx[2]{o}{a2}{120}{T'_2}{above}
\ocdx[2]{o}{a3}{60}{T'_n}{above}
\ocdx[2]{o}{a4}{20}{T'_{n+1}}{right}
\node at (140:\xch) {$y_1$};
\node at (90:\xch) {$\cdots$};
\node at (40:\xch) {$y_n$};
\node[below] at ($(o)!0.7!(a4)$) {$\beta_{n+1}$};
\node[right] at ($(o)!0.85!(a2)$) {$\beta_2$};
\node[left] at ($(o)!0.85!(a3)$) {$\beta_n\!$};
}. \nonumber
\end{align}}}
\noindent Here the first $\diamond$ is defined by Case 3, the second, third and fourth $\diamond$ are defined by induction and the last $\diamond$ is defined by Case 2.
This inductively define the multiplication $\diamond$ on $\calt$.
We then extend $\diamond$  by linearity to $\bfk\calt$. We then have the following result:

\begin{lemma}
Let $(\Omega, \leftarrow, \rightarrow, \lhd, \rhd, \cdot, \lambda)$ be a $\lambda$-$\ets$. Then $(\bfk \calt, \diamond)$ is an associative algebra with identity  $\treeoo{\cda[1.5] o\zhd{o/a}
\node[right] at ($(o)!0.7!(oa)$) {};
}$.
\label{lem:asso}
\end{lemma}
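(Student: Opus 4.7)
The plan is to prove both the unit property and associativity by induction, the former on $\dep(T)+\dep(T')$ and the latter on $\dep(T)+\dep(T')+\dep(T'')$. The unit $\treeoo{\cda[1.5] o\zhd{o/a}}$ corresponds to the $m=0$, trivial rightmost branch instance of the canonical form in (\ref{eq:def1}). Multiplying it on the left against any $T$ reduces, via Case 1 (when $T$'s leftmost branch is also trivial) or Case 3 (when it is not) plus the base formula (\ref{eq:def1}) at the root, to a pure root-level concatenation returning $T$, and symmetrically on the right.

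For associativity $(T\diamond T')\diamond T''=T\diamond (T'\diamond T'')$, the base case is when all three trees have depth~$1$, where (\ref{eq:def1}) is a pure concatenation of angles and associativity is immediate. For the inductive step, I would observe that Cases 1--3 of the defining recursion merely concatenate the non-colliding branches at the root; on both sides of the desired identity the outer juxtaposition of angles and branches decomposes identically, reducing the claim to a lower-depth instance of the hypothesis applied to the ``colliding'' pair of rightmost/leftmost branches. The genuinely new content is Case~4 occurring on both sides, and an application of Cases 2 and 3 shows it is enough to treat the critical subcase where $T=B^{+}_\alpha(S_1)$, $T'=B^{+}_\beta(S_2)$, $T''=B^{+}_\gamma(S_3)$ are single-branch trees with $S_1,S_2,S_3\in\calt$.

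In this critical subcase, applying the three-term Rota-Baxter-type expansion from (\ref{eq:def6}) twice on each side of associativity yields nine terms per side. The matching is organised by the outermost grafting operator $B^{+}_\omega$ and carried out term by term. The types of the outermost gratings and of the freshly created internal edges coincide on both sides precisely by the EDS axioms (\ref{EQ1})--(\ref{EQ10}); the scalar coefficients coincide by the unconditional $\lambda$-$\ets$ axioms (\ref{EQ11})--(\ref{EQ16}). The conditional axioms (a)--(f) of Definition~\ref{def:leds} intervene exactly when one of these matching scalars is nonzero: they provide the equalities of the $\cdot$-type expressions and of the subsidiary $\rhd,\lhd,\leftarrow,\rightarrow$ combinations needed so that the recursive subtrees sitting inside the $B^{+}_\omega$ agree, after which the inductive hypothesis closes the argument.

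The main obstacle is the bookkeeping for this nine-against-nine pairing and the careful case analysis when Case~4 at the top level interacts with the mixed situations (one of $T_{m+1}$, $T'_1$ trivial, the other not), since the reduction from a general Case~4 configuration to the three-single-branch critical subcase passes through several nested invocations of Cases 2 and 3. The axioms defining a $\lambda$-$\ets$ were, however, set up precisely to account for every pairing that arises in this Case~4 expansion, so no further algebraic identity beyond those collected in Definition~\ref{def:leds} will be required.
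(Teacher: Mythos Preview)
Your overall strategy matches the paper's: reduce to the case where all three trees are of the form $B^{+}_\omega(S)$, then carry out the term-by-term comparison of the double Rota-Baxter expansions, with the $\lambda$-$\ets$ axioms supplying every identity needed.

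There is, however, a gap in your induction scheme. You induct only on the sum of depths, and claim that stripping the non-colliding outer branches reduces to a ``lower-depth instance''. It does not: the colliding subtree $B^{+}_{\alpha_{m+1}}(T_{m+1})$ has depth $\dep(T_{m+1})+1$, which can equal $\dep(T)$. The paper handles this by a \emph{double} induction: outer on $p=\dep(T_1)+\dep(T_2)+\dep(T_3)$, inner on $q=\bra(T_1)+\bra(T_2)+\bra(T_3)$. The step $q>3$ is where the reduction to single-branch trees actually happens, using the elementary fact that a tree $T'_1$ whose rightmost branch is trivial satisfies $(T'_1\diamond X)\diamond Y=T'_1\diamond(X\diamond Y)$ directly from the definition of $\diamond$ (no induction needed for that move). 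You should make this secondary induction explicit.

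A smaller point: in the critical single-branch case the final comparison is eleven-against-eleven, not nine-against-nine. After the two Rota-Baxter expansions give nine terms on each side, one term on each side still contains a nested product $B^{+}_{\alpha'}(T'_2)\diamond B^{+}_{\beta'}(T'_3)$ (respectively $B^{+}_{\alpha'}(T'_1)\diamond B^{+}_{\beta'}(T'_2)$) inside the outermost $B^{+}$; expanding that inner product via the definition contributes two further terms. This is exactly where the EDS axioms (\ref{EQ1})--(\ref{EQ10}) involving the composite expressions $(\alpha\lhd\beta)\ast((\alpha\leftarrow\beta)\lhd\gamma)$ and $(\alpha\rhd(\beta\rightarrow\gamma))\ast(\beta\rhd\gamma)$ come into play.
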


\begin{proof}
By the construction of $\diamond$, $\bfk \calt$ is closed under $\diamond$ and $\treeoo{\cda[1.5] o\zhd{o/a}
\node[right] at ($(o)!0.7!(oa)$) {};
}$ is the identity of $\diamond$.

Now we show the associativity of $\diamond$, i.e.
\begin{align}
(T_1 \diamond T_2) \diamond T_3=T_1 \diamond (T_2 \diamond T_3), \mlabel{eq:asso}
\end{align}
 for all $T_1,T_2,T_3 \in \calt$
We prove Eq.~(\ref{eq:asso}) by induction on the sum of depths $p:=\dep(T_1)+\dep(T_2)+\dep(T_3)$.
If $p=3$, then $\dep(T_1)=\dep(T_2)=\dep(T_3)=1$ and $T_1, T_2, T_3$ are of the form
\begin{align*}
T_1 &=\treeoo{\cdb o\cdx[1.5]{o}{a1}{160}
\cdx[1.5]{o}{a2}{120}
\cdx[1.5]{o}{a3}{60}
\cdx[1.5]{o}{a4}{20}
\node at (140:1.2*\xch) {$x_1$};
\node at (90:1.2*\xch) {$\cdots$};
\node at (40:1.2*\xch) {$x_l$};
},&T_2&=\treeoo{\cdb o\cdx[1.5]{o}{a1}{160}
\cdx[1.5]{o}{a2}{120}
\cdx[1.5]{o}{a3}{60}
\cdx[1.5]{o}{a4}{20}
\node at (140:1.2*\xch) {$y_1$};
\node at (90:1.2*\xch) {$\cdots$};
\node at (40:1.2*\xch) {$y_m$};
},&& \text{ and } & T_3&=\treeoo{\cdb o\cdx[1.5]{o}{a1}{160}
\cdx[1.5]{o}{a2}{120}
\cdx[1.5]{o}{a3}{60}
\cdx[1.5]{o}{a4}{20}
\node at (140:1.2*\xch) {$z_1$};
\node at (90:1.2*\xch) {$\cdots$};
\node at (40:1.2*\xch) {$z_n$};
}&&\text{ with }\,l,m,n\geq 0.
\end{align*}
Then $(T_1 \diamond T_2) \diamond T_3=T_1 \diamond (T_2 \diamond T_3)$ by a direct calculation.

 For the induction step $p \geq 4$, we use induction on the sum of branches $q:=\bra(T_1)+\bra(T_2)+\bra(T_3)$. If $q=3$ and one of $T_1,T_2, T_3$ has depth 1, then this tree must be of the form $\treeoo{\cda[1.5] o\zhd{o/a}
\node[right] at ($(o)!0.7!(oa)$) {};
}$ and the associativity of $\diamond$ follows directly. Assume
\begin{align*}
T_1=B^{+}_{\alpha}(T'_1),\, T_2=B^{+}_{\beta}(T'_2), \, T_3=B^{+}_{\gamma}(T'_3) \,\, \text{ for some $\alpha,\beta, \gamma \in \Omega$ and $T'_1,T'_2,T'_3 \in \calt$},
\end{align*}
then
\begin{align*}
&\ (T_1 \diamond T_2) \diamond T_3=\big(B^{+}_{\alpha}(T'_1) \diamond B^{+}_{\beta}(T'_2) \big) \diamond B^{+}_{\gamma}(T'_3)\\
=&\ B^{+}_{\alpha \rightarrow \beta}(B^{+}_{\alpha \rhd \beta}(T'_1) \diamond T'_2) \diamond B^{+}_{\gamma}(T'_3) +B^{+}_{\alpha \leftarrow \beta}(T'_1 \diamond B^{+}_{\alpha \lhd \beta}(T'_2)) \diamond B^{+}_{\gamma} (T'_3)+ \lambda_{\alpha, \beta} B^{+}_{\alpha \cdot \beta}(T'_1 \diamond T'_2) \diamond B^{+}_{\gamma}(T'_3) \\
=&\ B^{+}_{(\alpha \rightarrow \beta) \rightarrow \gamma} \big(B^{+}_{(\alpha \rightarrow \beta) \rhd \gamma}(B^{+}_{\alpha \rhd \beta}(T'_1) \diamond T'_2) \diamond T'_3 \big)+B^{+}_{(\alpha \rightarrow \beta) \leftarrow \gamma} \big((B^{+}_{\alpha \rhd \beta}(T'_1) \diamond T'_2) \diamond B^{+}_{(\alpha \rightarrow \beta)\lhd \gamma}(T'_3) \big)\\
&\ +\lambda_{\alpha \rightarrow \beta, \gamma} B^{+}_{(\alpha \rightarrow \beta) \cdot \gamma} \big((B^{+}_{\alpha \rhd \beta}(T'_1) \diamond T'_2) \diamond T'_3 \big)+B^{+}_{(\alpha \leftarrow \beta) \rightarrow \gamma} \big(B^{+}_{(\alpha \leftarrow \beta) \rhd \gamma}(T'_1 \diamond B^{+}_{\alpha \lhd \beta}(T'_2)) \diamond T'_3 \big)\\
&\ +B^{+}_{(\alpha \leftarrow \beta) \leftarrow \gamma} \big((T'_1 \diamond B^{+}_{\alpha \lhd \beta}(T'_2)) \diamond B^{+}_{(\alpha \leftarrow \beta) \lhd \gamma}(T'_3) \big)+ \lambda_{\alpha \leftarrow \beta, \gamma} B^{+}_{(\alpha \leftarrow \beta) \cdot \gamma} \big((T'_1 \diamond B^{+}_{\alpha \lhd \beta}(T'_2)) \diamond T'_3 \big) \\
&\ +\lambda_{\alpha, \beta} B^{+}_{(\alpha \cdot \beta) \rightarrow \gamma} \big(B^{+}_{(\alpha \cdot \beta) \rhd \gamma}(T'_1 \diamond T'_2) \diamond T'_3 \big)+ \lambda_{\alpha, \beta} B^{+}_{(\alpha \cdot \beta) \leftarrow \gamma} \big((T'_1 \diamond T'_2) \diamond B^{+}_{(\alpha \cdot \beta) \lhd \gamma}(T'_3) \big)\\
&\ +\lambda_{\alpha, \beta} \lambda_{\alpha \cdot \beta, \gamma}B^{+}_{(\alpha \cdot \beta) \cdot \gamma} \big((T'_1 \diamond T'_2) \diamond T'_3 \big)\\
=&\ B^{+}_{(\alpha \rightarrow \beta) \rightarrow \gamma} \big(B^{+}_{(\alpha \rightarrow \beta) \rhd \gamma}(B^{+}_{\alpha \rhd \beta}(T'_1) \diamond T'_2) \diamond T'_3 \big)+B^{+}_{(\alpha \rightarrow \beta) \leftarrow \gamma} \big((B^{+}_{\alpha \rhd \beta}(T'_1) \diamond T'_2) \diamond B^{+}_{(\alpha \rightarrow \beta)\lhd \gamma}(T'_3) \big)\\
&\ +\lambda_{\alpha \rightarrow \beta, \gamma} B^{+}_{(\alpha \rightarrow \beta) \cdot \gamma} \big((B^{+}_{\alpha \rhd \beta}(T'_1) \diamond T'_2) \diamond T'_3 \big) +B^{+}_{(\alpha \leftarrow \beta) \rightarrow \gamma} \big(B^{+}_{(\alpha \leftarrow \beta) \rhd \gamma}(T'_1 \diamond B^{+}_{\alpha \lhd \beta}(T'_2)) \diamond T'_3 \big)\\
&\ +B^{+}_{(\alpha \leftarrow \beta) \leftarrow \gamma} \big(T'_1 \diamond (B^{+}_{\alpha \lhd \beta}(T'_2) \diamond B^{+}_{(\alpha \leftarrow \beta) \lhd \gamma}(T'_3)) \big)+ \lambda_{\alpha \leftarrow \beta, \gamma} B^{+}_{(\alpha \leftarrow \beta) \cdot \gamma} \big((T'_1 \diamond B^{+}_{\alpha \lhd \beta}(T'_2)) \diamond T'_3 \big) \\
&\ +\lambda_{\alpha, \beta} B^{+}_{(\alpha \cdot \beta) \rightarrow \gamma} \big(B^{+}_{(\alpha \cdot \beta) \rhd \gamma}(T'_1 \diamond T'_2) \diamond T'_3 \big)+ \lambda_{\alpha, \beta} B^{+}_{(\alpha \cdot \beta) \leftarrow \gamma} \big((T'_1 \diamond T'_2) \diamond B^{+}_{(\alpha \cdot \beta) \lhd \gamma}(T'_3) \big)\\
&\ +\lambda_{\alpha, \beta} \lambda_{\alpha \cdot \beta, \gamma}B^{+}_{(\alpha \cdot \beta) \cdot \gamma} \big((T'_1 \diamond T'_2) \diamond T'_3 \big)\quad \quad {\text{(by the induction hypothesis)}}\\
=&\  B^{+}_{(\alpha \rightarrow \beta) \rightarrow \gamma} \big(B^{+}_{(\alpha \rightarrow \beta)\rhd \gamma} (B^{+}_{\alpha \rhd \gamma}(T'_1) \diamond T'_2) \diamond T'_3 \big)+B^{+}_{(\alpha \rightarrow \beta) \leftarrow \gamma} \big((B^{+}_{\alpha \rhd \beta} (T'_1) \diamond T'_2) \diamond B^{+}_{(\alpha \rightarrow \beta) \lhd \gamma}(T'_3) \big)\\
&\ +\lambda_{\alpha \rightarrow \beta, \gamma} B^{+}_{(\alpha \rightarrow \beta) \cdot \gamma} \big((B^{+}_{\alpha \rhd \beta}(T'_1) \diamond T'_2) \diamond T'_3 \big) +B^{+}_{(\alpha \leftarrow \beta) \rightarrow \gamma} \big(B^{+}_{(\alpha \leftarrow \beta) \rhd \gamma}(T'_1 \diamond B^{+}_{\alpha \lhd \beta}(T'_2)) \diamond T'_3 \big)\\
&\ +B^{+}_{(\alpha \leftarrow \beta) \leftarrow \gamma} \big(T'_1 \diamond B^{+}_{(\alpha \lhd \beta) \rightarrow ((\alpha \leftarrow \beta) \lhd \gamma)}(  B^{+}_{(\alpha \lhd \beta) \rhd ((\alpha \leftarrow \beta) \lhd \gamma)} (T'_2) \diamond T'_3) \big)\\
&\ +B^{+}_{(\alpha \leftarrow \beta) \leftarrow \gamma}(T'_1 \diamond B^{+}_{(\alpha \lhd \beta) \leftarrow ((\alpha \leftarrow \beta)\lhd \gamma)} (T'_2 \diamond B^{+}_{(\alpha \lhd \beta) \lhd ((\alpha \leftarrow \beta) \lhd \gamma)}(T'_3)))\\
&\ +\lambda_{\alpha \lhd \beta, (\alpha \leftarrow \beta) \lhd \gamma} B^{+}_{(\alpha \leftarrow \beta) \leftarrow \gamma} \big(T'_1 \diamond B^{+}_{(\alpha \lhd \beta) \cdot ((\alpha \leftarrow \beta) \lhd \gamma)} (T'_2 \diamond T'_3) \big)+ \lambda_{\alpha \leftarrow \beta, \gamma} B^{+}_{(\alpha \leftarrow \beta) \cdot \gamma} \big((T'_1 \diamond B^{+}_{\alpha \lhd \beta}(T'_2)) \diamond T'_3 \big) \\
&\ +\lambda_{\alpha, \beta} B^{+}_{(\alpha \cdot \beta) \rightarrow \gamma} \big(B^{+}_{(\alpha \cdot \beta) \rhd \gamma}(T'_1 \diamond T'_2) \diamond T'_3 \big)+\lambda_{\alpha, \beta} B^{+}_{(\alpha \cdot \beta) \leftarrow \gamma} \big((T'_1 \diamond T'_2) \diamond B^{+}_{(\alpha \cdot \beta) \lhd \gamma}(T'_3) \big)\\
&\ +\lambda_{\alpha, \beta} \lambda_{\alpha \cdot \beta, \gamma}B^{+}_{(\alpha \cdot \beta) \cdot \gamma} \big((T'_1 \diamond T'_2) \diamond T'_3 \big),
\end{align*}
and
\begin{align*}
&\ T_1 \diamond (T_2 \diamond T_3)=B^{+}_{\alpha}(T'_1) \diamond \big(B^{+}_{\beta}(T'_2) \diamond B^{+}_{\gamma}(T'_3) \big)\\
=&\ B^{+}_{\alpha}(T'_1) \diamond B^{+}_{\beta \rightarrow \gamma}(B^{+}_{\beta \rhd \gamma}(T'_2) \diamond T'_3)+ B^{+}_{\alpha}(T'_1) \diamond B^{+}_{\beta \leftarrow \gamma}(T'_2 \diamond B^{+}_{\beta \lhd \gamma} (T'_3))+ \lambda_{\beta, \gamma} B^{+}_{\alpha}(T'_1) \diamond B^{+}_{\beta \cdot \gamma} (T'_2 \diamond T'_3)\\
=&\ B^{+}_{\alpha \rightarrow (\beta \rightarrow \gamma)} \big(B^{+}_{\alpha \rhd (\beta \rightarrow \gamma)}(T'_1) \diamond (B^{+}_{\beta \rhd \gamma}(T'_2) \diamond T'_3) \big)+B^{+}_{\alpha \leftarrow (\beta \rightarrow \gamma)} \big(T'_1 \diamond B^{+}_{\alpha \lhd (\beta \rightarrow \gamma)}(B^{+}_{\beta \rhd \gamma}(T'_2) \diamond T'_3) \big)\\
&\ +\lambda_{\alpha, \beta \rightarrow \gamma} B^{+}_{\alpha \cdot (\beta \rightarrow \gamma)} \big(T'_1 \diamond (B^{+}_{\beta \rhd \gamma}(T'_2) \diamond T'_3) \big)+B^{+}_{\alpha \rightarrow (\beta \leftarrow \gamma)}\big(B^{+}_{\alpha \rhd (\beta \leftarrow \gamma)} (T'_1) \diamond (T'_2 \diamond B^{+}_{\beta \lhd \gamma}(T'_3)) \big)\\
&\ +B^{+}_{\alpha \leftarrow (\beta \leftarrow \gamma)} \big(T'_1 \diamond B^{+}_{\alpha \lhd (\beta \leftarrow \gamma)}(T'_2 \diamond B^{+}_{\beta \lhd \gamma}(T'_3)) \big)+ \lambda_{\alpha, \beta \leftarrow \gamma} B^{+}_{\alpha \cdot (\beta \leftarrow \gamma)} \big(T'_1 \diamond (T'_2 \diamond B^{+}_{\beta \lhd \gamma} (T'_3)) \big)\\
&\ +\lambda_{\beta, \gamma} B^{+}_{\alpha \rightarrow (\beta \cdot \gamma)} \big(B^{+}_{\alpha \rhd(\beta \cdot \gamma)}(T'_1) \diamond (T'_2 \diamond T'_3) \big)+ \lambda_{\beta, \gamma} B^{+}_{\alpha \leftarrow (\beta \cdot \gamma)} \big(T'_1 \diamond B^{+}_{\alpha \leftarrow (\beta \cdot \gamma)}(T'_2 \diamond T'_3) \big)\\
&\ +\lambda_{\beta, \gamma} \lambda_{\alpha, \beta \cdot \gamma} B^{+}_{\alpha \cdot (\beta \cdot \gamma)} \big(T'_1 \diamond (T'_2 \diamond T'_3) \big)\\
=&\ B^{+}_{\alpha \rightarrow (\beta \rightarrow \gamma)} \big((B^{+}_{\alpha \rhd (\beta \rightarrow \gamma)}(T'_1) \diamond B^{+}_{\beta \rhd \gamma}(T'_2)) \diamond T'_3 \big)+B^{+}_{\alpha \leftarrow (\beta \rightarrow \gamma)} \big(T'_1 \diamond B^{+}_{\alpha \lhd (\beta \rightarrow \gamma)}(B^{+}_{\beta \rhd \gamma}(T'_2) \diamond T'_3) \big)\\
&\ +\lambda_{\alpha, \beta \rightarrow \gamma} B^{+}_{\alpha \cdot (\beta \rightarrow \gamma)} \big(T'_1 \diamond (B^{+}_{\beta \rhd \gamma}(T'_2) \diamond T'_3) \big)+B^{+}_{\alpha \rightarrow (\beta \leftarrow \gamma)}\big(B^{+}_{\alpha \rhd (\beta \leftarrow \gamma)} (T'_1) \diamond (T'_2 \diamond B^{+}_{\beta \lhd \gamma}(T'_3)) \big)\\
&\ +B^{+}_{\alpha \leftarrow (\beta \leftarrow \gamma)} \big(T'_1 \diamond B^{+}_{\alpha \lhd (\beta \leftarrow \gamma)}(T'_2 \diamond B^{+}_{\beta \lhd \gamma}(T'_3)) \big)+ \lambda_{\alpha, \beta \leftarrow \gamma} B^{+}_{\alpha \cdot (\beta \leftarrow \gamma)} \big(T'_1 \diamond (T'_2 \diamond B^{+}_{\beta \lhd \gamma} (T'_3)) \big)\\
&\ +\lambda_{\beta, \gamma} B^{+}_{\alpha \rightarrow (\beta \cdot \gamma)} \big(B^{+}_{\alpha \rhd(\beta \cdot \gamma)}(T'_1) \diamond (T'_2 \diamond T'_3) \big)+ \lambda_{\beta, \gamma} B^{+}_{\alpha \leftarrow (\beta \cdot \gamma)} \big(T'_1 \diamond B^{+}_{\alpha \leftarrow (\beta \cdot \gamma)}(T'_2 \diamond T'_3) \big)\\
&\ +\lambda_{\beta, \gamma} \lambda_{\alpha, \beta \cdot \gamma} B^{+}_{\alpha \cdot (\beta \cdot \gamma)} \big(T'_1 \diamond (T'_2 \diamond T'_3) \big)\quad \quad {\text{(by the induction hypothesis)}}\\
=&\ B^{+}_{\alpha \rightarrow (\beta \rightarrow \gamma)} \big( B^{+}_{(\alpha \rhd (\beta \rightarrow \gamma)) \rightarrow (\beta \rhd \gamma)}(B^{+}_{(\alpha \rhd (\beta \rightarrow \gamma)) \rhd (\beta \rhd \gamma)}(T'_1) \diamond T'_2) \diamond T'_3 \big)\\
&\ +B^{+}_{\alpha \rightarrow (\beta \rightarrow \gamma)} \big(B^{+}_{\alpha \rhd (\beta \rightarrow \gamma) \leftarrow (\beta \rhd \gamma)}(T'_1 \diamond B^{+}_{(\alpha \rhd (\beta \rightarrow \gamma) \lhd (\beta \rhd \gamma))}(T'_2)) \diamond T'_3  \big)\\
&\ + \lambda_{\alpha \rhd (\beta \rightarrow \gamma),\beta \rhd \gamma} B^{+}_{\alpha \rightarrow (\beta \rightarrow \gamma)} (B^{+}_{(\alpha \rhd (\beta \rightarrow \gamma)) \cdot (\beta \rhd \gamma)} (T'_1 \diamond T'_2) \diamond T'_3)+B^{+}_{\alpha \leftarrow (\beta \rightarrow \gamma)} \big(T'_1 \diamond B^{+}_{\alpha \lhd (\beta \rightarrow \gamma)}(B^{+}_{\beta \rhd \gamma}(T'_2) \diamond T'_3) \big)\\
&\ +\lambda_{\alpha, \beta \rightarrow \gamma} B^{+}_{\alpha \cdot (\beta \rightarrow \gamma)} \big(T'_1 \diamond (B^{+}_{\beta \rhd \gamma}(T'_2) \diamond T'_3) \big)+B^{+}_{\alpha \rightarrow (\beta \leftarrow \gamma)}\big(B^{+}_{\alpha \rhd (\beta \leftarrow \gamma)} (T'_1) \diamond (T'_2 \diamond B^{+}_{\beta \lhd \gamma}(T'_3)) \big)\\
&\ +B^{+}_{\alpha \leftarrow (\beta \leftarrow \gamma)} \big(T'_1 \diamond B^{+}_{\alpha \lhd (\beta \leftarrow \gamma)}(T'_2 \diamond B^{+}_{\beta \lhd \gamma}(T'_3)) \big)+ \lambda_{\alpha, \beta \leftarrow \gamma} B^{+}_{\alpha \cdot (\beta \leftarrow \gamma)} \big(T'_1 \diamond (T'_2 \diamond B^{+}_{\beta \lhd \gamma} (T'_3)) \big)\\
&\ +\lambda_{\beta, \gamma} B^{+}_{\alpha \rightarrow (\beta \cdot \gamma)} \big(B^{+}_{\alpha \rhd(\beta \cdot \gamma)}(T'_1) \diamond (T'_2 \diamond T'_3) \big)+ \lambda_{\beta, \gamma} B^{+}_{\alpha \leftarrow (\beta \cdot \gamma)} \big(T'_1 \diamond B^{+}_{\alpha \leftarrow (\beta \cdot \gamma)}(T'_2 \diamond T'_3) \big)\\
&\ +\lambda_{\beta, \gamma} \lambda_{\alpha, \beta \cdot \gamma} B^{+}_{\alpha \cdot (\beta \cdot \gamma)} \big(T'_1 \diamond (T'_2 \diamond T'_3) \big).
\end{align*}
By the induction hypothesis and $(\Omega, \leftarrow, \rightarrow, \lhd, \rhd,\cdot,  \lambda)$ being a $\lambda$-$\ets$, we get
\begin{align*}
(T_1 \diamond T_2) \diamond T_3= T_1 \diamond (T_2 \diamond T_3).
\end{align*}

If $q >3$, then at least one of $T_1,T_2, T_3$ have branches greater than or equal to 2. If $\bra(T_1) \geq 2$, then there exist
$T'_1, T''_1$ of the form
\begin{align*}
T'_1&=\treeoo{\cdb o\ocdx[2]{o}{a1}{160}{\overline{T}'_1}{left}
\ocdx[2]{o}{a2}{120}{\overline{T}'_2}{above}
\ocdx[2]{o}{a3}{60}{\overline{T}'_m}{above}
\cdx[2]{o}{a4}{20}
\node at (140:\xch) {$x_1$};
\node at (90:\xch) {$\cdots$};
\node at (40:\xch) {$x_m$};
\node[below] at ($(o)!0.7!(a1)$) {$\alpha_1$};
\node[right] at ($(o)!0.85!(a2)$) {$\alpha_2$};
\node[left] at ($(o)!0.85!(a3)$) {$\alpha_m\!$};
}&& \text{ and }& T''_1&=\treeoo{\cdb o
\ocdx[2]{o}{a1}{160}{\overline{T}''_1}{left}
\ocdx[2]{o}{a2}{120}{\overline{T}''_2}{above}
\ocdx[2]{o}{a3}{60}{\overline{T}''_n}{above}
\ocdx[2]{o}{a4}{20}{\overline{T}''_{n+1}}{right}
\node at (140:\xch) {$y_1$};
\node at (90:\xch) {$\cdots$};
\node at (40:\xch) {$y_n$};
\node[below] at ($(o)!0.7!(a4)$) {$\beta_{n+1}$};
\node[right] at ($(o)!0.85!(a2)$) {$\beta_2$};
\node[left] at ($(o)!0.85!(a3)$) {$\beta_n\!$};
}
\end{align*}
such that $T_1=T'_1 \diamond T''_1$. Hence
\begin{align*}
&\ (T_1 \diamond T_2) \diamond T_3=((T'_1 \diamond T''_1) \diamond T_2) \diamond T_3\\
=&\ (T'_1 \diamond (T''_1 \diamond T_2)) \diamond T_3 &&\text{(by the induction hypothesis)}\\
=&\ T'_1 \diamond ((T''_1 \diamond T_2) \diamond T_3) && \text{(by the form of $T'_1$ and the definition of $\diamond$)}\\
=&\ T'_1 \diamond (T''_1 \diamond (T_2 \diamond T_3)) &&\text{(by the induction hypothesis)}\\
=&\ T'_1 \diamond T''_1 \diamond (T_2 \diamond T_3) && \text{(by the form of $T'_1$ and the definition of $\diamond$)}\\
=&\ T_1 \diamond (T_2 \diamond T_3).
\end{align*}
If $\bra(T_2) \geq 2$ or $\bra(T_3) \geq 2$, the associativity can be proved similarly.
\end{proof}

Let $i:X \rightarrow \bfk \calt, x \mapsto \stree x$ be the natural inclusion. Then

\begin{theorem} \label{propRB}
Let $\Omega$ be a set with five products $\leftarrow, \rightarrow, \lhd, \rhd, \cdot$ and $\lambda$ a family of elements in ${\bfk}$ indexed by $\Omega^2$. Then the following conditions are equivalent:
\begin{enumerate}
\item \label{it1} $(\bfk \calt, \diamond, (B^{+}_{\omega})_{\omega \in \Omega})$ together with the map $i$ is the free $\Omega$-Rota-Baxter algebra generated by $X$.
\item \label{it2} $(\bfk \calt, \diamond, (B^{+}_{\omega})_{\omega \in \Omega})$ is an $\Omega$-Rota-Baxter algebra.
\item \label{it3} $(\Omega, \leftarrow, \rightarrow, \lhd, \rhd,\cdot,\lambda)$ is a $\lambda$-$\ets$.
\end{enumerate}
\end{theorem}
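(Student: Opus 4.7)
The plan is to prove the cyclic implications (\ref{it1}) $\Rightarrow$ (\ref{it2}) $\Rightarrow$ (\ref{it3}) $\Rightarrow$ (\ref{it1}). The implication (\ref{it1}) $\Rightarrow$ (\ref{it2}) is immediate, since by definition a free $\Omega$-Rota-Baxter algebra is in particular an $\Omega$-Rota-Baxter algebra.

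For (\ref{it3}) $\Rightarrow$ (\ref{it1}), associativity of $\diamond$ is furnished by Lemma~\ref{lem:asso}. The Rota-Baxter identity on $\bfk\calt$ follows directly from Case~4 (and the simpler Cases~1--3 when one side is trivial) of the definition of $\diamond$: for $T = B^{+}_{\alpha}(T'_1)$ and $T' = B^{+}_{\beta}(T'_2)$ the expansion gives exactly
\[B^{+}_{\alpha}(T'_1) \diamond B^{+}_{\beta}(T'_2) = B^{+}_{\alpha \rightarrow \beta}(B^{+}_{\alpha \rhd \beta}(T'_1) \diamond T'_2) + B^{+}_{\alpha \leftarrow \beta}(T'_1 \diamond B^{+}_{\alpha \lhd \beta}(T'_2)) + \lambda_{\alpha, \beta} B^{+}_{\alpha \cdot \beta}(T'_1 \diamond T'_2).\]
The universal property is then established by induction on the depth: given an $\Omega$-Rota-Baxter algebra $(A, (P_{\omega})_{\omega \in \Omega})$ and a map $f \colon X \to A$, we extend $f$ by sending $\stree x \mapsto f(x)$, $B^{+}_{\omega} \mapsto P_{\omega}$, and using the product of $A$ on $\diamond$-products of simpler trees. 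A tree of depth $\geq 2$ can be decomposed canonically as a $\diamond$-product of trees of the form $B^{+}_{\omega}(T)$ and simpler angularly decorated roots, so the extension is uniquely determined and well defined; compatibility with $\diamond$ follows from the Rota-Baxter identity in $A$ together with the inductive construction of $\diamond$.

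For (\ref{it2}) $\Rightarrow$ (\ref{it3}), the strategy is to extract each axiom of a $\lambda$-$\ets$ from a specific instance of the associativity of $\diamond$ applied to well-chosen test trees. Concretely, we compute $(B^{+}_{\alpha}(T'_1) \diamond B^{+}_{\beta}(T'_2)) \diamond B^{+}_{\gamma}(T'_3)$ and $B^{+}_{\alpha}(T'_1) \diamond (B^{+}_{\beta}(T'_2) \diamond B^{+}_{\gamma}(T'_3))$ by repeatedly applying the Rota-Baxter identity, just as in the proof of Lemma~\ref{lem:asso}. Each side is a linear combination of graftings $B^{+}_{\omega}(\cdots)$ whose types $\omega$ and whose coefficient scalars involve the five products and the family $\lambda$ applied to $\alpha,\beta,\gamma$. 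By taking $T'_1,T'_2,T'_3$ to be suitable leaves or products of leaves decorated by distinct generators $x,y,z \in X$, the resulting trees on both sides of the associativity identity are linearly independent in $\bfk\calt$, so matching types and coefficients forces all relations \eqref{EQ1}--\eqref{EQ10}, \eqref{EQ11}--\eqref{EQ16}, and the conditional relations (a)--(f) of Definition~\ref{def:leds}. The conditional relations emerge naturally because those summands only survive matching when their scalar prefactor is nonzero; otherwise the corresponding tree disappears and no constraint is imposed.

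The main obstacle is the bookkeeping in (\ref{it2}) $\Rightarrow$ (\ref{it3}): the two associativity expansions produce a large number of summands of different tree shapes, and one must carefully pair up terms of identical shape to isolate each individual identity of Definition~\ref{def:leds}. The key tool that makes the coefficient comparison rigorous is the linear independence of typed angularly decorated planar rooted trees of prescribed underlying shape, combined with the freeness of $X$ and $\Omega$ as indexing data in the test trees.
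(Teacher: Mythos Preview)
Your proposal is correct and follows essentially the same route as the paper: the cyclic implications (\ref{it1}) $\Rightarrow$ (\ref{it2}) $\Rightarrow$ (\ref{it3}) $\Rightarrow$ (\ref{it1}), with the substantive work in (\ref{it2}) $\Rightarrow$ (\ref{it3}) carried out by expanding both associations of $B^{+}_{\alpha}(\stree x)\diamond B^{+}_{\beta}(\stree y)\diamond B^{+}_{\gamma}(\stree z)$ via the Rota--Baxter identity and reading off the $\lambda$-$\ets$ axioms from the linear independence of the resulting typed trees, and with (\ref{it3}) $\Rightarrow$ (\ref{it1}) handled via Lemma~\ref{lem:asso} plus the inductive construction of $\overline f$. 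Your remark that the conditional axioms (a)--(f) of Definition~\ref{def:leds} arise precisely because the corresponding summands carry a scalar prefactor (so no constraint is forced when that scalar vanishes) is exactly the mechanism at work, and makes explicit something the paper leaves implicit.
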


\begin{proof}
\ref{it1} $\Longrightarrow$ \ref{it2} It is obvious.

\ref{it2} $\Longrightarrow$ \ref{it3} For $\alpha, \beta, \gamma \in \Omega$ and $\stree x, \stree y, \stree z \in \calt$, we have
\begin{align*}
&\ \big(B^{+}_{\alpha}(\stree x) \diamond B^{+}_{\beta}(\stree y) \big) \diamond B^{+}_{\gamma}(\stree z)\\
=&\ B^{+}_{\alpha \rightarrow \beta}(B^{+}_{\alpha \rhd \beta}(\stree x) \diamond \stree y) \diamond B^{+}_{\gamma}(\stree z) +B^{+}_{\alpha \leftarrow \beta}(\stree x \diamond B^{+}_{\alpha \lhd \beta}(\stree y)) \diamond B^{+}_{\gamma} (\stree z)\\
&\ + \lambda_{\alpha, \beta} B^{+}_{\alpha \cdot \beta}(\stree x \diamond \stree y) \diamond B^{+}_{\gamma}(\stree z) \\
=&\ B^{+}_{(\alpha \rightarrow \beta) \rightarrow \gamma} \big(B^{+}_{(\alpha \rightarrow \beta) \rhd \gamma}(B^{+}_{\alpha \rhd \beta}(\stree x) \diamond \stree y) \diamond \stree z \big)\\
&\ +B^{+}_{(\alpha \rightarrow \beta) \leftarrow \gamma} \big((B^{+}_{\alpha \rhd \beta}(\stree x) \diamond \stree y) \diamond B^{+}_{(\alpha \rightarrow \beta)\lhd \gamma}( \stree z) \big)\\
&\ +\lambda_{\alpha \rightarrow \beta, \gamma} B^{+}_{(\alpha \rightarrow \beta) \cdot \gamma} \big((B^{+}_{\alpha \rhd \beta}(\stree x) \diamond \stree y) \diamond \stree z \big)\\
&\ +B^{+}_{(\alpha \leftarrow \beta) \rightarrow \gamma} \big(B^{+}_{(\alpha \leftarrow \beta) \rhd \gamma}(\stree x \diamond B^{+}_{\alpha \lhd \beta}(\stree y)) \diamond \stree z \big)\\
&\ +B^{+}_{(\alpha \leftarrow \beta) \leftarrow \gamma} \big((\stree x \diamond B^{+}_{\alpha \lhd \beta}(\stree y)) \diamond B^{+}_{(\alpha \leftarrow \beta) \lhd \gamma}(\stree z) \big)\\
&\ + \lambda_{\alpha \leftarrow \beta, \gamma} B^{+}_{(\alpha \leftarrow \beta) \cdot \gamma} \big((\stree x \diamond B^{+}_{\alpha \lhd \beta}(\stree y)) \diamond \stree z \big) \\
&\ +\lambda_{\alpha, \beta} B^{+}_{(\alpha \cdot \beta) \rightarrow \gamma} \big(B^{+}_{(\alpha \cdot \beta) \rhd \gamma}(\stree x \diamond \stree y) \diamond \stree z \big)\\
&\ + \lambda_{\alpha, \beta} B^{+}_{(\alpha \cdot \beta) \leftarrow \gamma} \big((\stree x \diamond \stree y) \diamond B^{+}_{(\alpha \cdot \beta) \lhd \gamma}(\stree z) \big)\\
&\ +\lambda_{\alpha, \beta} \lambda_{\alpha \cdot \beta, \gamma}B^{+}_{(\alpha \cdot \beta) \cdot \gamma} \big((\stree x \diamond \stree y) \diamond \stree z \big)\\
=&\  B^{+}_{(\alpha \rightarrow \beta) \rightarrow \gamma} \big(B^{+}_{(\alpha \rightarrow \beta)\rhd \gamma} (B^{+}_{\alpha \rhd \gamma}(\stree x) \diamond \stree y) \diamond \stree z \big)\\
&\ +B^{+}_{(\alpha \rightarrow \beta) \leftarrow \gamma} \big((B^{+}_{\alpha \rhd \beta} (\stree x) \diamond \stree y) \diamond B^{+}_{(\alpha \rightarrow \beta) \lhd \gamma}(\stree z) \big)\\
&\ +\lambda_{\alpha \rightarrow \beta, \gamma} B^{+}_{(\alpha \rightarrow \beta) \cdot \gamma} \big((B^{+}_{\alpha \rhd \beta}(\stree x) \diamond \stree y) \diamond \stree z \big)\\
 &\ +B^{+}_{(\alpha \leftarrow \beta) \rightarrow \gamma} \big(B^{+}_{(\alpha \leftarrow \beta) \rhd \gamma}(\stree x \diamond B^{+}_{\alpha \lhd \beta}(\stree y)) \diamond \stree z \big)\\
&\ +B^{+}_{(\alpha \leftarrow \beta) \leftarrow \gamma} \big(\stree x \diamond B^{+}_{(\alpha \lhd \beta) \rightarrow ((\alpha \leftarrow \beta) \lhd \gamma)}(  B^{+}_{(\alpha \lhd \beta) \rhd ((\alpha \leftarrow \beta) \lhd \gamma)} (\stree y) \diamond \stree z) \big)\\
&\ +B^{+}_{(\alpha \leftarrow \beta) \leftarrow \gamma}(\stree x \diamond B^{+}_{(\alpha \lhd \beta) \leftarrow ((\alpha \leftarrow \beta)\lhd \gamma)} (\stree y \diamond B^{+}_{(\alpha \lhd \beta) \lhd ((\alpha \leftarrow \beta) \lhd \gamma)}(\stree z)))\\
&\ +\lambda_{\alpha \lhd \beta, (\alpha \leftarrow \beta) \lhd \gamma} B^{+}_{(\alpha \leftarrow \beta) \leftarrow \gamma} \big(\stree x \diamond B^{+}_{(\alpha \lhd \beta) \cdot ((\alpha \leftarrow \beta) \lhd \gamma)} (\stree y \diamond \stree z) \big)\\
&\ + \lambda_{\alpha \leftarrow \beta, \gamma} B^{+}_{(\alpha \leftarrow \beta) \cdot \gamma} \big((\stree x \diamond B^{+}_{\alpha \lhd \beta}(\stree y)) \diamond \stree z \big) \\
&\ +\lambda_{\alpha, \beta} B^{+}_{(\alpha \cdot \beta) \rightarrow \gamma} \big(B^{+}_{(\alpha \cdot \beta) \rhd \gamma}(\stree x \diamond \stree y) \diamond \stree z \big)\\
&\ +\lambda_{\alpha, \beta} B^{+}_{(\alpha \cdot \beta) \leftarrow \gamma} \big((\stree x \diamond \stree y) \diamond B^{+}_{(\alpha \cdot \beta) \lhd \gamma}(\stree z) \big)\\
&\ +\lambda_{\alpha, \beta} \lambda_{\alpha \cdot \beta, \gamma}B^{+}_{(\alpha \cdot \beta) \cdot \gamma} \big((\stree x \diamond \stree y) \diamond \stree z \big)\\
=&\ \XX[scale=1.6]{\xxr{-4}4\xxr{-7.5}{7.5}
\node at (0,-0.3) {$\bullet$};
\node at (-1.2,0.1) {\tiny $(\alpha \rightarrow \beta) \rhd \gamma$};
\node at (-1.1,0.45) {\tiny $\alpha \rhd \beta$};
\node at (-1.2,-0.15) {\tiny $(\alpha \rightarrow \beta) \rightarrow \gamma$};
\xxhu00{z} \xxhu[0.1]{-4}4{y} \xxhu[0.1]{-7.5}{7.5}{x}
}+
\XX[scale=1.6]{\xxr{-6}6\xxl66
\node at (0,-0.3) {$\bullet$};
\node at (-0.9,0.2) {\tiny $\alpha \rhd \beta$};
\node at (1.3,0.2) {\tiny $(\alpha \rightarrow \beta) \lhd \gamma$};
\node at (-1.2,-0.15) {\tiny $(\alpha \rightarrow \beta) \leftarrow \gamma$};
\xxhu00{y} \xxhu66{z} \xxhu{-6}6{x}
}+
\lambda_{\alpha \rightarrow \beta, \gamma} \XX[scale=1.6]{\xxr{-7.5}{7.5} \draw (0,0)--(0,1);
\node at (-1,-0.15) {\tiny $(\alpha \rightarrow \beta) \cdot \gamma$};
\node at (-1,0.4) {\tiny $\alpha \rhd \beta$};
\node at (-0.15,0.4){$y$};
\node at (0.15,0.4){$z$};
\xxhu[0.1]{-7.5}{7.5}{x}
\node at (0,-0.3) {$\bullet$};
}
+
\XX[scale=1.6]{\xxr{-5}5\xxl{-2}8
\node at (0,-0.3) {$\bullet$};
\node at (-1,0.1) {\tiny $(\alpha \leftarrow \beta) \rhd \gamma$};
\node at (0,0.55) {\tiny $\alpha \rhd \beta$};
\node at (-1.2,-0.15) {\tiny $(\alpha \leftarrow \beta) \rightarrow \gamma$};
\xxhu00z
\xxhu[0.1]{-5}5{x} \xxhu[0.1]{-2}8{y}
}\\
&\
+
\XX[scale=1.6]{\xxl55\xxr28
\node at (0,-0.3) {$\bullet$};
\node at (2,0.15) {\tiny $(\alpha \lhd \beta) \rightarrow ((\alpha \leftarrow \beta) \lhd \gamma)$};
\node at (0.22,0.5) {\tiny $(\alpha \lhd \beta) \rhd ((\alpha \leftarrow \beta) \lhd \gamma)$};
\node at (-1.2,-0.15) {\tiny $(\alpha \leftarrow \beta) \leftarrow \gamma$};
\xxhu[0.1]00{x\,}
\xxhu[0.1]55{\,z} \xxhu[0.1]28{y}
} +
\XX[scale=1.6]{\xxl44\xxl{7.5}{7.5}
\node at (0,-0.3) {$\bullet$};
\node at (2.0,0.1) {\tiny $(\alpha \lhd \beta) \leftarrow ((\alpha \leftarrow \beta) \lhd \gamma)$};
\node at (2.2,0.4) {\tiny $(\alpha \lhd \beta) \lhd ((\alpha \leftarrow \beta) \lhd \gamma)$};
\node at (1.2,-0.15) {\tiny $(\alpha \leftarrow \beta) \leftarrow \gamma$};
\xxhu00{x} \xxhu[0.1]44{y} \xxhu[0.1]{7.5}{7.5}{z}
}+
\lambda_{\alpha \lhd \beta, (\alpha \leftarrow \beta) \lhd \gamma}\XX[scale=1.6]{\xxl{5}{5} \draw (0.5,0.5)--(0.5,1);
\node at (0,-0.3) {$\bullet$};
\node at (0.35,0.85) {$y$}; \node at (0.65,0.85){$z$};
\node at (1.9,0.2) {\tiny $(\alpha \lhd \beta) \cdot ((\alpha \leftarrow \beta) \lhd \gamma)$};
\node at (1,-0.15) {\tiny $(\alpha \leftarrow \beta) \leftarrow \gamma$};
\xxhu00x
}\\
&\ +\lambda_{\alpha \leftarrow \beta, \gamma} \XX[scale=1.6]{\xxlr0{7.5} \draw(0,0)--(0,0.75);
\xxh0023{x\ \,}{0.3} \xxhu[0.12]0{7.5}{z}
\node at (0.16,0.35) {\tiny $y$};
\node at (0,-0.3) {$\bullet$};
\node at (0.25,0.62) {\tiny $\alpha \lhd \beta$};
\node at (1,-0.15){\tiny $(\alpha \leftarrow \beta) \cdot \gamma$};
}+ \lambda_{\alpha, \beta} \XX[scale=1.6]{\xxr{-5}{5} \draw(-0.5,0.5)--(-0.5,1);
\node at (0,-0.3) {$\bullet$};
\xxhu00z
\node at (-0.72,0.85){$x$};
\node at (-0.38,0.85){$y$};
\node at (-1,0.2){\tiny $(\alpha \cdot \beta) \rhd \gamma$};
\node at (-1,-0.15){\tiny $(\alpha \cdot \beta) \rightarrow \gamma$};
}+ \lambda_{\alpha, \beta} \XX[scale=1.6]{\xxl{7}{7} \draw(0,0)--(0,1);
\node at (0,-0.3) {$\bullet$};
\node at (-0.2,0.4) {$x$};
\node at (0.2,0.4){$y$};
\node at (0.75,0.9){$z$};
\node at (1.1,0.3){\tiny $(\alpha \cdot \beta) \lhd \gamma$};
\node at (1,-0.15) {\tiny $(\alpha \cdot \beta) \leftarrow \gamma$};
}+ \lambda_{\alpha, \beta} \lambda_{\alpha \cdot \beta, \gamma} \XX[scale=1.6]{\xx00{1.6}\xx00{2.4}
\xxh001{1.6}{\ \ \,z}{0.6}
\xxh00{1.6}{2.4}{y}{0.5}
\xxh00{2.4}3{x\ \ }{0.6}
\node at (0,-0.3){$\bullet$};
\node at (1,-0.15){$\tiny (\alpha \cdot \beta) \cdot \gamma$};
},
\end{align*}
and
\begin{align*}
&\ B^{+}_{\alpha}(\stree x) \diamond \big(B^{+}_{\beta}(\stree y) \diamond B^{+}_{\gamma}(\stree z) \big)\\
=&\ B^{+}_{\alpha}(\stree x) \diamond B^{+}_{\beta \rightarrow \gamma}(B^{+}_{\beta \rhd \gamma}(\stree y) \diamond \stree z)+ B^{+}_{\alpha}(\stree x) \diamond B^{+}_{\beta \leftarrow \gamma}(\stree y \diamond B^{+}_{\beta \lhd \gamma} (\stree z))\\
&\ + \lambda_{\beta, \gamma} B^{+}_{\alpha}(\stree x) \diamond B^{+}_{\beta \cdot \gamma} (\stree y \diamond \stree z)\\
=&\ B^{+}_{\alpha \rightarrow (\beta \rightarrow \gamma)} \big(B^{+}_{\alpha \rhd (\beta \rightarrow \gamma)}(\stree x) \diamond (B^{+}_{\beta \rhd \gamma}(\stree y) \diamond \stree z) \big)\\
&\ +B^{+}_{\alpha \leftarrow (\beta \rightarrow \gamma)} \big(\stree x \diamond B^{+}_{\alpha \lhd (\beta \rightarrow \gamma)}(B^{+}_{\beta \rhd \gamma}(\stree y) \diamond \stree z) \big)\\
&\ +\lambda_{\alpha, \beta \rightarrow \gamma} B^{+}_{\alpha \cdot (\beta \rightarrow \gamma)} \big(\stree x \diamond (B^{+}_{\beta \rhd \gamma}(\stree y) \diamond \stree z) \big)\\
&\ +B^{+}_{\alpha \rightarrow (\beta \leftarrow \gamma)}\big(B^{+}_{\alpha \rhd (\beta \leftarrow \gamma)} (\stree x) \diamond (\stree y \diamond B^{+}_{\beta \lhd \gamma}(\stree z)) \big)\\
&\ +B^{+}_{\alpha \leftarrow (\beta \leftarrow \gamma)} \big(\stree x \diamond B^{+}_{\alpha \lhd (\beta \leftarrow \gamma)}(\stree y \diamond B^{+}_{\beta \lhd \gamma}(\stree z)) \big)\\
&\ + \lambda_{\alpha, \beta \leftarrow \gamma} B^{+}_{\alpha \cdot (\beta \leftarrow \gamma)} \big(\stree x \diamond (\stree y \diamond B^{+}_{\beta \lhd \gamma} (\stree z)) \big)\\
&\ +\lambda_{\beta, \gamma} B^{+}_{\alpha \rightarrow (\beta \cdot \gamma)} \big(B^{+}_{\alpha \rhd(\beta \cdot \gamma)}(\stree x) \diamond (\stree y \diamond \stree z) \big)\\
&\ + \lambda_{\beta, \gamma} B^{+}_{\alpha \leftarrow (\beta \cdot \gamma)} \big(\stree x \diamond B^{+}_{\alpha \leftarrow (\beta \cdot \gamma)}(\stree y \diamond \stree z) \big)\\
&\ +\lambda_{\beta, \gamma} \lambda_{\alpha, \beta \cdot \gamma} B^{+}_{\alpha \cdot (\beta \cdot \gamma)} \big(\stree x \diamond (\stree y \diamond \stree z) \big)\\
=&\ B^{+}_{\alpha \rightarrow (\beta \rightarrow \gamma)} \big( B^{+}_{(\alpha \rhd (\beta \rightarrow \gamma)) \rightarrow (\beta \rhd \gamma)}(B^{+}_{(\alpha \rhd (\beta \rightarrow \gamma)) \rhd (\beta \rhd \gamma)}(\stree x) \diamond \stree y) \diamond \stree z \big)\\
&\ +B^{+}_{\alpha \rightarrow (\beta \rightarrow \gamma)} \big(B^{+}_{\alpha \rhd (\beta \rightarrow \gamma) \leftarrow (\beta \rhd \gamma)}(\stree x \diamond B^{+}_{(\alpha \rhd (\beta \rightarrow \gamma) \lhd (\beta \rhd \gamma))}(\stree y)) \diamond \stree z  \big)\\
&\ + \lambda_{\alpha \rhd (\beta \rightarrow \gamma),\beta \rhd \gamma} B^{+}_{\alpha \rightarrow (\beta \rightarrow \gamma)} (B^{+}_{(\alpha \rhd (\beta \rightarrow \gamma)) \cdot (\beta \rhd \gamma)} (\stree x \diamond \stree y) \diamond \stree z)\\
&\ +B^{+}_{\alpha \leftarrow (\beta \rightarrow \gamma)} \big(\stree x \diamond B^{+}_{\alpha \lhd (\beta \rightarrow \gamma)}(B^{+}_{\beta \rhd \gamma}(\stree y) \diamond \stree z) \big)\\
&\ +\lambda_{\alpha, \beta \rightarrow \gamma} B^{+}_{\alpha \cdot (\beta \rightarrow \gamma)} \big(\stree x \diamond (B^{+}_{\beta \rhd \gamma}(\stree y) \diamond \stree z) \big)\\
&\ +B^{+}_{\alpha \rightarrow (\beta \leftarrow \gamma)}\big(B^{+}_{\alpha \rhd (\beta \leftarrow \gamma)} (\stree x) \diamond (\stree y \diamond B^{+}_{\beta \lhd \gamma}(\stree z)) \big)\\
&\ +B^{+}_{\alpha \leftarrow (\beta \leftarrow \gamma)} \big(\stree x \diamond B^{+}_{\alpha \lhd (\beta \leftarrow \gamma)}(\stree y \diamond B^{+}_{\beta \lhd \gamma}(\stree z)) \big)\\
&\ + \lambda_{\alpha, \beta \leftarrow \gamma} B^{+}_{\alpha \cdot (\beta \leftarrow \gamma)} \big(\stree x \diamond (\stree y \diamond B^{+}_{\beta \lhd \gamma} (\stree z)) \big)\\
&\ +\lambda_{\beta, \gamma} B^{+}_{\alpha \rightarrow (\beta \cdot \gamma)} \big(B^{+}_{\alpha \rhd(\beta \cdot \gamma)}(\stree x) \diamond (\stree y \diamond \stree z) \big)\\
&\ + \lambda_{\beta, \gamma} B^{+}_{\alpha \leftarrow (\beta \cdot \gamma)} \big(\stree x \diamond B^{+}_{\alpha \leftarrow (\beta \cdot \gamma)}(\stree y \diamond \stree z) \big)\\
&\ +\lambda_{\beta, \gamma} \lambda_{\alpha, \beta \cdot \gamma} B^{+}_{\alpha \cdot (\beta \cdot \gamma)} \big(\stree x \diamond (\stree y \diamond \stree z) \big)\\
=&\
\XX[scale=1.6]{\xxr{-4}4\xxr{-7.5}{7.5}
\node at (0,-0.3) {$\bullet$};
\node at (-2,0.1) {\tiny $(\alpha \rhd (\beta \rightarrow \gamma)) \rightarrow (\beta \rhd \gamma)$};
\node at (-2.2,0.4) {\tiny $(\alpha \rhd (\beta \rightarrow \gamma)) \rhd (\beta \rhd \gamma)$};
\node at (-1.2,-0.15) {\tiny $\alpha \rightarrow (\beta \rightarrow \gamma)$};
\xxhu00{z} \xxhu[0.1]{-4}4{y} \xxhu[0.1]{-7.5}{7.5}{x}
}+
\XX[scale=1.6]{\xxr{-5}5\xxl{-2}8
\node at (0,-0.3) {$\bullet$};
\node at (-2,0.1) {\tiny $(\alpha \rhd (\beta \rightarrow \gamma))\leftarrow (\beta \rhd \gamma)$};
\node at (-0.2,0.55) {\tiny $(\alpha \rhd (\beta \rightarrow \gamma)) \lhd (\beta \rhd \gamma)$};
\node at (-1.2,-0.15) {\tiny $\alpha \rightarrow (\beta \rightarrow \gamma)$};
\xxhu00z
\xxhu[0.1]{-5}5{x} \xxhu[0.1]{-2}8{y}
}+\lambda_{\alpha \rhd (\beta \rightarrow \gamma),\beta \rhd \gamma} \XX[scale=1.6]{\xxr{-5}{5} \draw(-0.5,0.5)--(-0.5,1);
\node at (0,-0.3) {$\bullet$};
\xxhu00z
\node at (-0.72,0.85){$x$};
\node at (-0.38,0.85){$y$};
\node at (-2,0.2){\tiny $(\alpha \rhd (\beta \rightarrow \gamma)) \cdot (\beta \rhd \gamma)$};
\node at (-1,-0.15){\tiny $\alpha \rightarrow (\beta \rightarrow \gamma)$};
}\\
&\ +
\XX[scale=1.6]{\xxl55\xxr28
\node at (0,-0.3) {$\bullet$};
\node at (1.3,0.15) {\tiny $\alpha \lhd (\beta \rightarrow \gamma)$};
\node at (0,0.5) {\tiny $\beta \rhd \gamma$};
\node at (1.2,-0.15) {\tiny $\alpha \leftarrow (\beta \rightarrow \gamma)$};
\xxhu[0.1]00{x\,}
\xxhu[0.1]55{\,z} \xxhu[0.1]28{y}
}+ \lambda_{\alpha, \beta \rightarrow \gamma} \XX[scale=1.6]{\xxlr0{7.5} \draw(0,0)--(0,0.75);
\xxh0023{x\ \,}{0.3} \xxhu[0.12]0{7.5}{z}
\node at (0.16,0.35) {\tiny $y$};
\node at (0,-0.3) {$\bullet$};
\node at (0.25,0.62) {\tiny $\beta \rhd \gamma$};
\node at (1,-0.15){\tiny $\alpha \cdot (\beta \rightarrow \gamma)$};
}+
\XX[scale=1.6]{\xxr{-6}6\xxl66
\node at (0,-0.3) {$\bullet$};
\node at (-1.2,0.15) {\tiny $\alpha \rhd (\beta \leftarrow \gamma)$};
\node at (0.8,0.15) {\tiny $\beta \lhd \gamma$};
\node at (-1.2,-0.15) {\tiny $\alpha \rightarrow (\beta \leftarrow \gamma)$};
\xxhu00{y} \xxhu66{z} \xxhu{-6}6{x}
}+
\XX[scale=1.6]{\xxl44\xxl{7.5}{7.5}
\node at (0,-0.3) {$\bullet$};
\node at (1.2,0.1) {\tiny $\alpha \lhd (\beta \leftarrow \gamma)$};
\node at (0.9,0.4) {\tiny $\beta \lhd \gamma$};
\node at (1.2,-0.15) {\tiny $\alpha \leftarrow (\beta \leftarrow \gamma)$};
\xxhu00{x} \xxhu[0.1]44{y} \xxhu[0.1]{7.5}{7.5}{z}
}\\
&\ +\lambda_{\alpha, \beta \leftarrow \gamma} \XX[scale=1.6]{\xxl{7}{7} \draw(0,0)--(0,1);
\node at (0,-0.3) {$\bullet$};
\node at (-0.2,0.4) {$x$};
\node at (0.2,0.4){$y$};
\node at (0.75,0.9){$z$};
\node at (1,0.3){\tiny $\beta \lhd \gamma$};
\node at (1,-0.15) {\tiny $\alpha \cdot (\beta \leftarrow \gamma)$};
}+
\lambda_{\beta, \gamma} \XX[scale=1.6]{\xxr{-7.5}{7.5} \draw (0,0)--(0,1);
\node at (-1,-0.15) {\tiny $\alpha \rightarrow (\beta \cdot \gamma)$};
\node at (-1.3,0.4) {\tiny $\alpha \rhd (\beta \cdot \gamma)$};
\node at (-0.15,0.4){$y$};
\node at (0.15,0.4){$z$};
\xxhu[0.1]{-7.5}{7.5}{x}
\node at (0,-0.3) {$\bullet$};
}+\lambda_{\beta, \gamma}\XX[scale=1.6]{\xxl{5}{5} \draw (0.5,0.5)--(0.5,1);
\node at (0,-0.3) {$\bullet$};
\node at (0.35,0.85) {$y$}; \node at (0.65,0.85){$z$};
\node at (1.2,0.2) {\tiny $\alpha \lhd (\beta \cdot \gamma)$};
\node at (1.2,-0.15) {\tiny $\alpha \leftarrow (\beta \cdot \gamma)$};
\xxhu00x
}+ \lambda_{\beta, \gamma} \lambda_{\alpha, \beta \cdot \gamma} \XX[scale=1.6]{\xx00{1.6}\xx00{2.4}
\xxh001{1.6}{\ \ \,z}{0.6}
\xxh00{1.6}{2.4}{y}{0.5}
\xxh00{2.4}3{x\ \ }{0.6}
\node at (0,-0.3){$\bullet$};
\node at (1,-0.15){$\tiny \alpha \cdot (\beta \cdot \gamma)$};
}.
\end{align*}
By Lemma~\ref{lem:asso} and identifying the types of the planar rooted trees, we get that 
$(\Omega, \leftarrow, \rightarrow, \lhd$, $\rhd, \cdot, \lambda)$ is a $\lambda$-$\ets$.

\ref{it3} $\Longrightarrow$ \ref{it1} By Lemma~\ref{lem:asso} and the definition of $\diamond$, $(\bfk \calt, \diamond, (B^{+}_{\omega})_{\omega \in \Omega})$ is an $\Omega$-Rota-Baxter algebra. Now we show the freeness of $\bfk \calt$.

Let $(R, \cdot, (P_{\omega})_{\omega \in \Omega})$ be an $\Omega$-Rota-Baxter algebra of weight $\lambda_{\Omega}$ and $f : X \rightarrow R$ a set map. We extend $f$ to an $\Omega$-Rota-Baxter algebra morphism $\overline{f}: \bfk \calt \rightarrow R$ such that $\overline{f} \circ i=f$.

For $T \in \calt$, we define $\overline{f}(T)$ by induction on $\dep(T)$. If $\dep(T)=1$, then $T$ is of the form
\begin{align*}
T =\treeoo{\cdb o\cdx[1.5]{o}{a1}{160}
\cdx[1.5]{o}{a2}{120}
\cdx[1.5]{o}{a3}{60}
\cdx[1.5]{o}{a4}{20}
\node at (140:1.2*\xch) {$x_1$};
\node at (90:1.2*\xch) {$\cdots$};
\node at (40:1.2*\xch) {$x_m$};
}.
\end{align*}
Define
\begin{align*}
\overline{f}(T):=f(x_1) \cdot f(x_2) \cdots f(x_m).
\end{align*}

For the induction step of $\dep(T) \geq 2$, we define $\overline{f}(T)$ by induction on the branches of $T$. If $\bra(T)=1$, then $T$ is of the form
\begin{align*}
T=\treeoo{\cdb o\ocdx[2]{o}{a1}{160}{T_1}{left}
\ocdx[2]{o}{a2}{120}{T_2}{above}
\ocdx[2]{o}{a3}{60}{T_m}{above}
\ocdx[2]{o}{a4}{20}{T_{m+1}}{right}
\node at (140:\xch) {$x_1$};
\node at (90:\xch) {$\cdots$};
\node at (40:\xch) {$x_m$};
\node[below] at ($(o)!0.7!(a1)$) {$\alpha_1$};
\node[below] at ($(o)!0.7!(a4)$) {$\alpha_{m+1}$};
\node[right] at ($(o)!0.85!(a2)$) {$\alpha_2$};
\node[left] at ($(o)!0.85!(a3)$) {$\alpha_m\!$};
\node at (0,-0.35) {$\bullet$};
\node at (0.2,-0.15) {$\omega$};
}.
\end{align*}
Define
\[\overline{f}(T):=P_{\omega} \left( P_{\alpha_1} \big(\overline{f}(T_1) \big) \cdot f(x_1) \cdot P_{\alpha_2} \big(\overline{f}(T_2) \big) \cdots P_{\alpha_m} \big( \overline{f}(T_m) \big) \cdot f(x_m) \cdot P_{\alpha} \big(\overline{f} (T_{m+1}) \big) \right).\]
If $\bra(T)>1$, then $T$ is of the form
\begin{align*}
T=\treeoo{\cdb o\ocdx[2]{o}{a1}{160}{T_1}{left}
\ocdx[2]{o}{a2}{120}{T_2}{above}
\ocdx[2]{o}{a3}{60}{T_m}{above}
\ocdx[2]{o}{a4}{20}{T_{m+1}}{right}
\node at (140:\xch) {$x_1$};
\node at (90:\xch) {$\cdots$};
\node at (40:\xch) {$x_m$};
\node[below] at ($(o)!0.7!(a1)$) {$\alpha_1$};
\node[below] at ($(o)!0.7!(a4)$) {$\alpha_{m+1}$};
\node[right] at ($(o)!0.85!(a2)$) {$\alpha_2$};
\node[left] at ($(o)!0.85!(a3)$) {$\alpha_m\!$};
}.
\end{align*}
Define
\[\overline{f}(T):=P_{\alpha_1} \big( \overline{f}(T_1) \big) \cdot f(x_1) \cdot P_{\alpha_2} \big(\overline{f}(T_2) \big) \cdots P_{\alpha_m} \big( \overline{f}(T_m) \big) \cdot f(x_m) \cdot P_{\alpha} \big(\overline{f} (T_{m+1}) \big).\]

By  construction of $\overline{f}$, $\overline{f} \circ i=f$ and $ P_{\omega} \overline{f}=\overline{f} B^{+}_{\omega} $ for all $\omega \in \Omega$. Next we show that $\overline{f}$ is an algebra homomorphism, i.e.
\begin{align}
\overline{f}(T \diamond T')=\overline{f}(T) \cdot \overline{f}(T') \,\, \text{ for all $T,T' \in \calt$.}
\label{eq:algmor}
\end{align}

We prove Eq.~(\ref{eq:algmor}) by induction on $\dep(T)+\dep(T')$. If $\dep(T)+\dep(T')=2$, then $\dep(T)=\dep(T')=1$ and
\begin{align*}
T &=\treeoo{\cdb o\cdx[1.5]{o}{a1}{160}
\cdx[1.5]{o}{a2}{120}
\cdx[1.5]{o}{a3}{60}
\cdx[1.5]{o}{a4}{20}
\node at (140:1.2*\xch) {$x_1$};
\node at (90:1.2*\xch) {$\cdots$};
\node at (40:1.2*\xch) {$x_m$};
}&&\text{ and }&T'&=\treeoo{\cdb o\cdx[1.5]{o}{a1}{160}
\cdx[1.5]{o}{a2}{120}
\cdx[1.5]{o}{a3}{60}
\cdx[1.5]{o}{a4}{20}
\node at (140:1.2*\xch) {$y_1$};
\node at (90:1.2*\xch) {$\cdots$};
\node at (40:1.2*\xch) {$y_n$};
},&&\text{ with }\,m,n\geq 0,
\end{align*}
and
\begin{align*}
\overline{f}(T \diamond T')= f(x_1) \cdots f(x_m) \cdot f(y_1) \cdots f(y_n)=(f(x_1) \cdots f(x_m)) \cdot (f(y_1) \cdots f(y_n))=\overline{f}(T) \diamond \overline{f}(T').
\end{align*}

For the induction step of $\dep(T)+\dep(T') \geq 3$. If $T \diamond T'$ belongs to the first three cases, then $\overline{f}(T \diamond T')=\overline{f}(T) \cdot \overline{f}(T')$ by the definition of $\diamond$ and the construction of $\overline{f}$. So we only need to consider the fourth case. Then
\begin{align*}
\overline{f}(T \diamond T')=&\ \overline{f} \Biggl( \Biggl(\treeoo{\cdb o\ocdx[2]{o}{a1}{160}{T_1}{left}
\ocdx[2]{o}{a2}{120}{T_2}{above}
\ocdx[2]{o}{a3}{60}{T_m}{above}
\cdx[2]{o}{a4}{20}
\node at (140:\xch) {$x_1$};
\node at (90:\xch) {$\cdots$};
\node at (40:\xch) {$x_m$};
\node[below] at ($(o)!0.7!(a1)$) {$\alpha_1$};
\node[right] at ($(o)!0.85!(a2)$) {$\alpha_2$};
\node[left] at ($(o)!0.85!(a3)$) {$\alpha_m\!$};
}\diamond \bigg(B^{+}_{\alpha_{m+1}\rightarrow \beta_1}\big(B^{+}_{\alpha_{m+1} \rhd \beta_1}(T_{m+1})\diamond T'_1 \big)
+B^{+}_{\alpha_{m+1} \leftarrow \beta_1} \big( T_{m+1} \diamond B^{+}_{\alpha_{m+1} \lhd \beta_1}(T'_1)\big)\\
&\ + \lambda_{\alpha_{m+1}, \beta_1} B^{+}_{\alpha_{m+1} \cdot \beta_1} \big(T_{m+1} \diamond T'_{1} \big) \bigg) \Biggl) \diamond\treeoo{\cdb o
\cdx[2]{o}{a1}{160}
\ocdx[2]{o}{a2}{120}{T'_2}{above}
\ocdx[2]{o}{a3}{60}{T'_n}{above}
\ocdx[2]{o}{a4}{20}{T'_{n+1}}{right}
\node at (140:\xch) {$y_1$};
\node at (90:\xch) {$\cdots$};
\node at (40:\xch) {$y_n$};
\node[below] at ($(o)!0.7!(a4)$) {$\beta_{n+1}$};
\node[right] at ($(o)!0.85!(a2)$) {$\beta_2$};
\node[left] at ($(o)!0.85!(a3)$) {$\beta_n\!$};
} \Biggl)\\
=&\ \bigg(P_{\alpha_1}(\overline{f}(T_1)) \cdot f(x_1) \cdot P_{\alpha_2}(\overline{f}(T_2)) \cdots P_{\alpha_m}(\overline{f}(T_m)) \cdot f(x_m) \cdot \overline{f} \Big(B^{+}_{\alpha_{m+1}\rightarrow \beta_1}\big(B^{+}_{\alpha_{m+1} \rhd \beta_1}(T_{m+1})\diamond T'_1 \big)\\
&\ +B^{+}_{\alpha_{m+1} \leftarrow \beta_1} \big( T_{m+1} \diamond B^{+}_{\alpha_{m+1} \lhd \beta_1}(T'_1)\big)
+ \lambda_{\alpha_{m+1}, \beta_1} B^{+}_{\alpha_{m+1} \cdot \beta_1} \big(T_{m+1} \diamond T'_{1} \big) \Big) \bigg) \\
&\ \cdot f(y_1) \cdot P_{\beta_2}(\overline{f}(T'_2)) \cdots P_{\beta_{n+1}}(\overline{f}(T'_{n+1}))\\
=&\ \Big( P_{\alpha_1}(\overline{f}(T_1)) \cdot f(x_1) \cdot P_{\alpha_2}(\overline{f}(T_2)) \cdots P_{\alpha_m}(\overline{f}(T_m)) \cdot f(x_m) \cdot P_{\alpha_{m+1}}(\overline{f}(T_{m+1})) \Big)\\
&\ \cdot \Big(P_{\beta_1}(f(T'_1))\cdot f(y_1) \cdot P_{\beta_2}(\overline{f}(T'_2)) \cdots P_{\beta_{n+1}}(\overline{f}(T'_{n+1})) \Big)\\
=&\ \overline{f}(T) \diamond \overline{f}(T').
\end{align*}

Moreover, by the construction of $\overline{f}$, it is the unique way to extend $f$ as an $\Omega$-Rota-Baxter algebra morphism. Hence $(\bfk \calt, \diamond, (B^{+}_{\omega})_{\omega \in \Omega})$ together with the map $i$ is the free $\Omega$-Rota-Baxter algebra generated by $X$.
\end{proof}

\begin{remark}
\begin{enumerate}
\item In Definition~\ref{def:orb}, $\Omega$ is required to be a set with five products $\leftarrow, \rightarrow, \lhd, \rhd, \cdot$ and $\lambda$ is required to be a family of elements in ${\bfk}$ indexed by $\Omega^2$. 
This defines a category of $\Omega$-Rota-Baxter algebras for any such $\Omega$. 
Generally, free $\Omega$-Rota-Baxter algebras are not based on $\Omega$-angularly decorated planar trees.
However, by Theorem~\ref{propRB}, the condition of a free $\Omega$-Rota-Baxter algebra based on the combinatorics
of $\Omega$-angularly decorated planar trees, similar to the one of (classical) Rota-Baxter algebras,
is equivalent to $(\Omega, \leftarrow, \rightarrow, \lhd, \rhd,\cdot,\lambda)$ being a $\lambda$-$\ets$.

\item As a particular case, we recover the description of free family Rota-Baxter algebras of \cite{ZGM}.
An alternative description of free Rota-Baxter algebras (with rooted forests) is done in \cite{EFG08}.
\end{enumerate}
\end{remark}

Taking all elements in $\lambda$ to be 0, we get the following result:
\begin{coro}
Let $\Omega$ be a set with four products $\leftarrow, \rightarrow, \lhd, \rhd$. Then the following conditions are equivalent:
\begin{enumerate}
\item \label{it1bis} $(\bfk \calt, \diamond, (B^{+}_{\omega})_{\omega \in \Omega})$ together with the map $i$ is the free $\Omega$-Rota-Baxter algebra of weight 0 generated by $X$.
\item \label{it2bis} $(\bfk \calt, \diamond, (B^{+}_{\omega})_{\omega \in \Omega})$ is an $\Omega$-Rota-Baxter algebra of weight 0.
\item \label{it3bis} $(\Omega, \leftarrow, \rightarrow, \lhd, \rhd)$ is an EDS.
\end{enumerate}
\end{coro}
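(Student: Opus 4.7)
The plan is to deduce this corollary as a direct specialization of Theorem \ref{propRB} by setting all weights to zero. The key observation is that when $\lambda_{\alpha,\beta} = 0$ for every $\alpha,\beta \in \Omega$, the auxiliary operation $\cdot$ never enters the picture: in the definition of the product $\diamond$ (Case 4, equation~\ref{eq:def6}) the term $\lambda_{\alpha_{m+1},\beta_1} B^{+}_{\alpha_{m+1}\cdot \beta_1}(T_{m+1}\diamond T'_1)$ drops out, and in the $\Omega$-Rota-Baxter axiom the summand $\lambda_{\alpha,\beta} P_{\alpha\cdot\beta}(ab)$ vanishes, recovering exactly the weight-$0$ axiom.

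First I would fix an arbitrary auxiliary map $\cdot : \Omega \times \Omega \to \Omega$ (any choice, e.g.\ $\alpha \cdot \beta := \alpha$) and take $\lambda_{\alpha,\beta} := 0$ for all $\alpha,\beta$. I would then observe that in Definition~\ref{def:leds} the identities (\ref{EQ11})--(\ref{EQ16}) all reduce to $0 = 0$ and are therefore automatic, while each of the six conditional clauses (a)--(f) has hypothesis $\lambda_{\cdots} \neq 0$ which is never satisfied, so they are vacuously true. Hence with $\lambda \equiv 0$ the data $(\Omega,\leftarrow,\rightarrow,\lhd,\rhd,\cdot,\lambda)$ is a $\lambda$-$\ets$ if and only if $(\Omega,\leftarrow,\rightarrow,\lhd,\rhd)$ is an EDS.

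Second, I would verify that with $\lambda \equiv 0$ the structures coincide on both sides: an $\Omega$-Rota-Baxter algebra of weight $0$ in the sense of the later definition is exactly an $\Omega$-Rota-Baxter algebra of weight $\lambda = 0$ in the sense used in Theorem \ref{propRB}, and the product $\diamond$ built in Section 2 does not depend on the chosen $\cdot$ once all $\lambda_{\alpha,\beta}$ vanish (so $(\bfk\calt,\diamond,(B^+_\omega)_{\omega\in\Omega})$ is unambiguously defined from the four operations $\leftarrow,\rightarrow,\lhd,\rhd$ alone).

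Finally, plugging these identifications into Theorem \ref{propRB} gives the equivalence of \ref{it1bis}, \ref{it2bis} and \ref{it3bis} at once: \ref{it1bis} $\Rightarrow$ \ref{it2bis} is trivial, \ref{it2bis} $\Rightarrow$ \ref{it3bis} follows from the corresponding implication of the theorem (the associativity computation on trees yields exactly the EDS axioms (\ref{EQ1})--(\ref{EQ10}) after matching types, all $\lambda$-conditions collapsing), and \ref{it3bis} $\Rightarrow$ \ref{it1bis} follows from Lemma \ref{lem:asso} together with the universal property proved in the last part of Theorem \ref{propRB}, adapted word for word to the weight-$0$ setting. There is no real obstacle beyond bookkeeping; the only step that requires a moment of care is checking that the conditional clauses of Definition~\ref{def:leds} indeed become vacuous, but this is immediate from their form.
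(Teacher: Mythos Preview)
Your proposal is correct and matches the paper's approach exactly: the paper derives this corollary simply by ``taking all elements in $\lambda$ to be $0$'' in Theorem~\ref{propRB}, and your write-up makes explicit precisely the bookkeeping (the vacuity of conditions (a)--(f), the triviality of (\ref{EQ11})--(\ref{EQ16}), and the irrelevance of the choice of $\cdot$) that the paper leaves implicit.
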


\subsection{Commutative $\Omega$-Rota-Baxter algebras on typed words}\label{subsection2c}

Let $\Omega$ be a set and $V$ a vector space. Recall from~\cite{Foi20} that the space of $\Omega$-typed words in $V$ is
\begin{align*}
\shh(V)=\bigoplus_{n \geq 1} (\bfk \Omega)^{\ot (n-1)} \ot V^{\ot n}.
\end{align*}
For the  ease of statement, we redefine the space of $\Omega$-typed words in $V$ as
\begin{align*}
\shh(V)=\bigoplus_{n \geq 0} \underbrace{V \ot (\bfk \Omega) \ot \cdots \ot (\bfk \Omega) \ot V}_{\text{$(n+1)$'s $V$ and $n$'s
$(\bfk \Omega)$}}
\end{align*}
and write each pure tensor $\mathbf{v}=v_0 \ot \omega_1 \ot \cdots \ot \omega_n \ot v_n \in \Omega$ under the form
\begin{align*}
\mathbf{v}=v_0 \ot_{\omega_1} v_1 \ot_{\omega_2} \cdots \ot_{\omega_n} v_n,
\end{align*}
where $n \geq 0$, $\omega_1, \cdots, \omega_n \in \Omega$ and $v_0, \cdots, v_n \in V$ with the convention $\mathbf{v}=v_0$ if $n=0$. We call $\mathbf{v}$ an {\bf $\Omega$-typed word} in $V$ and define its {\bf length} $\ell(\mathbf{v}):=n+1$.

Let $A$ be an algebra with identity $1_A$, $(\Omega,\leftarrow, \rightarrow, \lhd, \rhd, \cdot)$ be a set with five products
and $\lambda=(\lambda_{\alpha,\beta})_{(\alpha,\beta)\in \Omega^2}$ be a family of elements in ${\bfk}$ indexed by $\Omega^2$. For any pure tensors $\mathbf{a}=a_0 \ot_{\alpha_1} \mathbf{a}', \mathbf{b}=b_0 \ot_{\beta_1} \mathbf{b}' \in \shh(A)$ with $\ell(\mathbf{a})=m$ and $\ell(\mathbf{b})=n$, define $\mathbf{a} \diamond \mathbf{b}$ inductively as follows:
\begin{equation}
\mathbf{a} \diamond  \mathbf{b}:=
\left\{
\begin{array}{l}
\ a_0b_0,  \hspace{5cm} \text{if $m=n=0$}, \\
\ a_0b_0 \ot_{\alpha_1} \mathbf{a}',  \hspace{4cm} \text{if $m>0, n=0$},\\
\ a_0b_0 \ot_{\beta_1} \mathbf{b}',  \hspace{4cm} \text{if $m=0, n>0$},\\
\ a_0b_0 \ot_{\alpha_1 \rightarrow \beta_1} \big((1_A \ot_{\alpha_1 \rhd \beta_1} \mathbf{a}') \diamond \mathbf{b}' \big) + a_0b_0 \ot_{\alpha \leftarrow \beta_1} \big(\mathbf{a}' \diamond(1_A \ot_{\alpha_1 \lhd \beta_1} \mathbf{b}') \big)  \\
\hspace{3mm} + \lambda_{\alpha_1 , \beta_1} a_0b_0 \ot_{\alpha_1 \cdot \beta_1}(\mathbf{a}' \diamond \mathbf{b}'),  \hspace{1.5cm} \text{if $m>0, n>0$}.\\
\end{array}
\right .
\mlabel{eq:shuffle}
\end{equation}
Extending bilinearly, we construct a product $\diamond$ on $\shh(A)$.

\begin{lemma}
Let $A$ be an algebra with identity $1_A$, $\Omega$ a set with five products $\leftarrow, \rightarrow, \lhd, \rhd, \cdot$ and $\lambda$ a family of elements in ${\bfk}$ indexed by $\Omega^2$. If $(\Omega, \leftarrow, \rightarrow, \lhd, \rhd, \cdot ,\lambda)$ is a $\lambda$-ESD, then $(\shh(A), \diamond)$ is an associative algebra with identity $1_A$.
\end{lemma}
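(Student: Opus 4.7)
The plan is to mirror the strategy used for $(\bfk\calt,\diamond)$ in Lemma~\ref{lem:asso}, replacing the induction on depth/branches of trees by an induction on the total length of typed words. The identity claim is immediate from the recursive definition (\ref{eq:shuffle}): taking $\mathbf{a}=1_A$ (so $m=0$) or $\mathbf{b}=1_A$ (so $n=0$) gives back $\mathbf{b}$ respectively $\mathbf{a}$, since $1_A b_0=b_0$ and $a_0 1_A=a_0$ in $A$. So the real content is associativity.

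To prove $(\mathbf{a}\diamond \mathbf{b})\diamond \mathbf{c}=\mathbf{a}\diamond(\mathbf{b}\diamond \mathbf{c})$ for all pure tensors $\mathbf{a},\mathbf{b},\mathbf{c}\in\shh(A)$, I would induct on $N:=\ell(\mathbf{a})+\ell(\mathbf{b})+\ell(\mathbf{c})$. For $N=3$ (all three of length one), both sides reduce to the triple product in $A$, so associativity of $A$ handles this. For the inductive step, if any one of $\mathbf{a},\mathbf{b},\mathbf{c}$ has length~$1$ (is an element of $A$ with no decorations), the first three clauses of (\ref{eq:shuffle}) show directly that $\diamond$ just "pushes" the $A$-factor into the $a_0b_0$-slot of the other word, and associativity then follows from associativity in $A$ together with the induction hypothesis applied to the remaining two words. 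Thus the real case is when all three words have length $\geq 2$, written $\mathbf{a}=a_0\otimes_{\alpha_1}\mathbf{a}'$, $\mathbf{b}=b_0\otimes_{\beta_1}\mathbf{b}'$, $\mathbf{c}=c_0\otimes_{\gamma_1}\mathbf{c}'$.

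In that main case I would expand $(\mathbf{a}\diamond \mathbf{b})\diamond \mathbf{c}$ by first using the fourth clause of (\ref{eq:shuffle}) to split $\mathbf{a}\diamond \mathbf{b}$ into three summands indexed by $\alpha_1\rightarrow\beta_1$, $\alpha_1\leftarrow\beta_1$, $\alpha_1\cdot\beta_1$, and then $\diamond$ each of these with $\mathbf{c}$ using the fourth clause again, producing nine summands. Symmetrically, expanding $\mathbf{a}\diamond(\mathbf{b}\diamond \mathbf{c})$ produces nine summands indexed by the parameters coming from the other parenthesization. For each pair of summands I would use the induction hypothesis to move parentheses on the inner $\diamond$-products of $\mathbf{a}',\mathbf{b}',\mathbf{c}'$ and the inserted units $1_A$, so that the comparison reduces to an equality in $\bfk\Omega$ of the external type decorations and the scalar coefficients. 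The matching of the nine terms is exactly the same matching already carried out for trees in Lemma~\ref{lem:asso}: the $(\rightarrow,\rightarrow)$ term on the left matches against the $(\rhd\text{ on the first argument})$ term on the right via (\ref{eq21})--(\ref{eq22}) and (\ref{EQ14}); the $(\leftarrow,\leftarrow)$ terms match via the six EDS identities (\ref{EQ1})--(\ref{EQ10}) together with (\ref{eq19})--(\ref{eq20}) and (\ref{EQ12}); the mixed $(\rightarrow,\leftarrow)$ and $(\leftarrow,\rightarrow)$ terms match via (\ref{eq23})--(\ref{eq24}) and (\ref{EQ13}); the $\cdot$-terms match via (\ref{eq25})--(\ref{eq27}), (\ref{EQ11}), (\ref{EQ15}), (\ref{EQ16}); and the hypothesis "if $\lambda_{\cdot,\cdot}=\lambda_{\cdot,\cdot}\neq 0$ then \dots" in Definition~\ref{def:leds} takes care of the scalar constraints that govern when a $\cdot$-summand even appears.

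The main obstacle is thus not conceptual but combinatorial: organising the nine-versus-nine term matching cleanly so that each of the six groups of axioms (a)--(f) in Definition~\ref{def:leds} is invoked exactly where it is needed, and keeping track of where the induction hypothesis is being used to reshuffle the inner parenthesisations of $\mathbf{a}',\mathbf{b}',\mathbf{c}'$. Since the proof for trees has already done this bookkeeping in full, my plan is essentially to transcribe that case analysis into the typed-word setting, replacing each grafting $B^+_\omega$ by a tensor factor $1_A\otimes_{\omega}(-)$ and each tree-level $\diamond$ by the word-level $\diamond$ of (\ref{eq:shuffle}); the identical collection of $\lambda$-$\ets$ axioms will then make each of the nine pairs of summands coincide.
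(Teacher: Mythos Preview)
Your proposal is correct and follows essentially the same approach as the paper's own proof: induction on $\ell(\mathbf{a})+\ell(\mathbf{b})+\ell(\mathbf{c})$, the base case and length-$1$ cases handled directly, and the main case by expanding both sides via the fourth clause of (\ref{eq:shuffle}) and matching terms using the $\lambda$-$\ets$ axioms together with the induction hypothesis on the inner products. One minor imprecision: the comparison is not literally nine-versus-nine, since after applying the induction hypothesis to re-associate the inner products, one further expansion (of $(1_A\otimes_\mu\mathbf{b}')\diamond(1_A\otimes_\nu\mathbf{c}')$ inside the $(\leftarrow,\leftarrow)$ term on the left, and of $(1_A\otimes_\mu\mathbf{a}')\diamond(1_A\otimes_\nu\mathbf{b}')$ inside the $(\rightarrow,\rightarrow)$ term on the right) is needed, yielding eleven terms on each side; the paper carries this out explicitly, and the matching then proceeds exactly as in Lemma~\ref{lem:asso}.
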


\begin{proof}
By Eq.~(\ref{eq:shuffle}), $\shh(A)$ is closed under $\diamond$ and  $1_A$ is the identity of $\diamond$.

For pure tensors $\mathbf{a},\mathbf{b},\mathbf{c} \in \shh(A)$, we prove
\begin{align}
(\mathbf{a} \diamond \mathbf{b}) \diamond \mathbf{c}=\mathbf{a} \diamond (\mathbf{b} \diamond \mathbf{c})
\label{eq:asso1}
\end{align}
by induction on $\ell(\mathbf{a})+\ell(\mathbf{b})+\ell(\mathbf{c})$. If $\ell(\mathbf{a})+\ell(\mathbf{b})+\ell(\mathbf{c})=3$, then $\ell(\mathbf{a})=\ell(\mathbf{b})=\ell(\mathbf{c})=1$ and $\mathbf{a}=a_0, \mathbf{b}=b_0, \mathbf{c}=c_0$. Hence
\begin{align*}
(\mathbf{a} \diamond \mathbf{b}) \diamond \mathbf{c}=a_0b_0c_0=\mathbf{a} \diamond (\mathbf{b} \diamond \mathbf{c}).
\end{align*}

Suppose Eq.~(\ref{eq:asso1}) holds for $\ell(\mathbf{a})+\ell(\mathbf{b})+\ell(\mathbf{c})\leq p$, where $p \geq 3$ is a fixed integer. Consider the case of  $\ell(\mathbf{a})+\ell(\mathbf{b})+\ell(\mathbf{c})=p+1$. If one of $\ell(\mathbf{a}), \ell(\mathbf{b}), \ell(\mathbf{c})$ is equal to 1, then Eq.~(\ref{eq:asso1}) holds by  direct calculation. Hence we assume $\ell(\mathbf{a})>1, \ell(\mathbf{b})>1, \ell(\mathbf{c})>1$ and
\begin{align*}
\mathbf{a}=a_0 \ot_{\alpha_1} \mathbf{a}', \,\, \mathbf{b}=b_0 \ot_{\beta_1} \mathbf{b}', \,\, \mathbf{c}=c_0 \ot_{\gamma_1} \mathbf{c}'.
\end{align*}
Then
\begin{align*}
&\ (\mathbf{a} \diamond \mathbf{b}) \diamond \mathbf{c}\\
=&\ (a_0b_0)c_0 \ot_{(\alpha_1 \rightarrow \beta_1) \rightarrow \gamma_1}\big((1_A \ot_{(\alpha_1 \rightarrow \beta_1) \rhd \gamma_1} ((1_A \ot_{\alpha_1 \ot \rhd \beta_1} \mathbf{a}') \diamond \mathbf{b}')) \diamond \mathbf{c}' \big)\\
&\ +(a_0b_0)c_0 \ot_{(\alpha_1 \rightarrow \beta_1) \leftarrow \gamma_1} \big( ((1_A \ot_{\alpha_1 \rhd \beta_1} \mathbf{a}') \diamond \mathbf{b}') \diamond (1_A \ot_{(\alpha_1 \rightarrow \beta_1) \lhd \gamma_1} \mathbf{c}') \big)\\
&\ +\lambda_{(\alpha_1 \rightarrow \beta_1), \gamma_1} (a_0b_0)c_0\ot _{(\alpha_1 \rightarrow \beta_1) \cdot \gamma_1}\big(((1_A \ot_{\alpha_1 \rhd \beta_1} \mathbf{a}') \diamond \mathbf{b}') \diamond \mathbf{c}'\big)\\
&\ +(a_0b_0)c_0 \ot_{(\alpha_1 \leftarrow \beta_1) \rightarrow \gamma_1} \big((1_A \ot_{(\alpha_1 \leftarrow \beta_1) \rhd \gamma_1} (\mathbf{a}' \diamond (1_A \ot_{\alpha_1 \lhd \beta_1} \mathbf{b}'))) \diamond \mathbf{c}' \big)\\
&\ +(a_0b_0)c_0 \ot_{(\alpha_1 \leftarrow \beta_1) \leftarrow \gamma_1} \big(\mathbf{a}' \diamond (1_A \ot_{(\alpha_1 \lhd \beta_1) \rightarrow ((\alpha_1 \leftarrow \beta_1) \lhd \gamma_1)} ((1_A \ot_{(\alpha_1 \lhd \beta_1) \rhd ((\alpha_1 \leftarrow \beta_1) \lhd \gamma_1)} \mathbf{b}') \diamond \mathbf{c}') )\big)\\
&\ +(a_0b_0)c_0 \ot_{(\alpha_1 \leftarrow \beta_1) \leftarrow \gamma_1} \big(\mathbf{a}' \diamond (1_A \ot_{(\alpha_1 \lhd \beta_1) \leftarrow ((\alpha_1 \leftarrow \beta_1)\lhd \gamma_1)}(\mathbf{b}' \diamond (1_A \ot_{(\alpha_1 \lhd \beta_1) \lhd ((\alpha_1 \leftarrow \beta_1) \lhd \gamma_1)} \mathbf{c}'))) \big)\\
&\ +\lambda_{(\alpha_1 \lhd \beta_1) , ((\alpha_1 \leftarrow \beta_1) \lhd \gamma_1)} (a_0b_0)c_0 \ot_{(\alpha_1 \leftarrow \beta_1)\leftarrow \gamma_1} \big(\mathbf{a}' \diamond (1_A \ot_{(\alpha_1 \lhd \beta_1) \cdot ((\alpha_1 \leftarrow \beta_1) \lhd \gamma_1)}(\mathbf{b}' \diamond \mathbf{c}')) \big)\\
&\ +\lambda_{(\alpha_1 \leftarrow \beta_1), \gamma_1} (a_0b_0)c_0 \ot_{(\alpha_1 \leftarrow \beta_1) \cdot \gamma_1} \big((\mathbf{a}' \diamond (1_A \ot_{\alpha_1 \lhd \beta_1} \mathbf{b}'))\diamond \mathbf{c}' \big)\\
&\ +\lambda_{\alpha_1 , \beta_1} (a_0b_0)c_0 \ot_{(\alpha_1 \cdot \beta_1) \rightarrow \gamma_1} \big((1_A \ot_{(\alpha_1 \cdot \beta_1)\rhd \gamma_1} (\mathbf{a}' \diamond \mathbf{b}'))\diamond \mathbf{c}' \big)\\
&\ +\lambda_{\alpha_1 , \beta_1} (a_0b_0)c_0 \ot_{(\alpha_1 \cdot \beta_1)\leftarrow \gamma_1}\big((\mathbf{a}' \diamond \mathbf{b}') \diamond (1_A \ot_{(\alpha_1 \cdot \beta_1)\lhd \gamma_1} \mathbf{c}') \big)\\
&\ +\lambda_{\alpha_1 , \beta_1}\lambda_{(\alpha_1 \cdot \beta_1) , \gamma_1} (a_0b_0)c_0 \ot_{(\alpha_1 \cdot \beta_1) \cdot \gamma_1} \big((\mathbf{a}' \diamond \mathbf{b}') \diamond \mathbf{c}' \big)
\end{align*}
and
\begin{align*}
&\ \mathbf{a} \diamond (\mathbf{b} \diamond \mathbf{c})\\
=&\ a_0(b_0c_0) \ot_{\alpha_1 \rightarrow (\beta_1 \rightarrow \gamma_1)} \big((1_A \ot_{(\alpha_1 \rhd(\beta_1 \rightarrow \gamma_1)) \rightarrow (\beta_1 \rhd \gamma_1)}((1_A \ot_{(\alpha_1 \rhd (\beta_1 \rightarrow \gamma_1)) \rhd (\beta_1 \rhd \gamma_1)} \mathbf{a}') \diamond \mathbf{b}'))\diamond \mathbf{c}' \big)\\
&\ +a_0(b_0c_0) \ot_{\alpha_1 \rightarrow (\beta_1 \rightarrow \gamma_1)} \big((1_A \ot_{(\alpha_1 \rhd(\beta_1 \rightarrow \gamma_1))\leftarrow (\beta_1 \rhd \gamma_1)}(\mathbf{a}' \diamond (1_A \ot_{(\alpha_1 \rhd (\beta_1 \rightarrow \gamma_1)) \lhd (\beta_1 \rhd \gamma_1)} \mathbf{b}'))) \diamond \mathbf{c}' \big)\\
&\ +\lambda_{(\alpha_1 \rhd (\beta_1 \rightarrow \gamma_1)) , (\beta_1 \rhd \gamma_1)} a_0(b_0c_0) \ot_{\alpha_1 \rightarrow(\beta_1 \rightarrow \gamma_1)} \big((1_A \ot_{(\alpha_1 \rhd (\beta_1 \rightarrow \gamma_1)) \cdot (\beta_1 \rhd \gamma_1)} (\mathbf{a}' \diamond \mathbf{b}'))\diamond \mathbf{c}' \big)\\
&\ +a_0(b_0c_0)\ot_{\alpha_1 \leftarrow (\beta_1 \rightarrow \gamma_1)} \big(\mathbf{a}' \diamond (1_A \ot_{\alpha_1 \lhd (\beta_1 \rightarrow \gamma_1)} ((1_A \ot_{\beta_1 \rhd \gamma_1} \mathbf{b}') \diamond \mathbf{c}')) \big)\\
&\ +\lambda_{\alpha_1 , (\beta_1 \rightarrow \gamma_1)} a_0(b_0c_0) \ot _{\alpha_1 \cdot (\beta_1 \rightarrow \gamma_1)} \big(\mathbf{a}' \diamond ((1_A \ot_{\beta_1 \rhd \gamma_1} \mathbf{b}') \diamond \mathbf{c}') \big)\\
&\ +a_0(b_0c_0) \ot _{\alpha_1 \rightarrow (\beta_1 \leftarrow \gamma_1)} \big((1_A \ot_{\alpha_1 \rhd (\beta_1 \leftarrow \gamma_1)} \mathbf{a}') \diamond (\mathbf{b}' \diamond (1_A \ot_{\beta_1 \lhd \gamma_1} \mathbf{c}')) \big)\\
&\ +a_0(b_0c_0) \ot_{\alpha_1 \leftarrow (\beta_1 \leftarrow \gamma_1)} \big(\mathbf{a}' \diamond (1_A \ot_{\alpha_1 \lhd (\beta_1 \leftarrow \gamma_1)}(\mathbf{b}' \diamond (1_A \ot_{\beta_1 \lhd \gamma_1} \mathbf{c}'))) \big)\\
&\ +\lambda_{\alpha_1 , (\beta_1 \leftarrow \gamma_1)} a_0(b_0c_0) \ot_{\alpha_1 \cdot (\beta_1 \leftarrow \gamma_1)} \big(\mathbf{a}' \diamond (\mathbf{b}' \diamond (1_A \ot_{\beta_1 \lhd \gamma_1} \mathbf{c}')) \big)\\
&\ +\lambda_{\beta_1 , \gamma_1} a_0(b_0c_0) \ot_{\alpha_1 \rightarrow (\beta_1 \cdot \gamma_1)} \big((1_A \ot_{\alpha_1 \rhd (\beta_1 \cdot \gamma_1)} \mathbf{a}') \diamond (\mathbf{b}' \diamond \mathbf{c}') \big)\\
&\ +\lambda_{\beta_1 , \gamma_1} a_0(b_0c_0) \ot_{\alpha_1 \leftarrow (\beta_1 \cdot \gamma_1)} \big(\mathbf{a}' \diamond (1_A \ot_{\alpha_1 \lhd (\beta_1 \cdot \gamma_1)} (\mathbf{b}' \diamond \mathbf{c}')) \big)\\
&\ +\lambda_{\beta_1 , \gamma_1} \lambda_{\alpha_1 , (\beta_1 \cdot \gamma_1)} a_0(b_0c_0) \ot_{\alpha_1 \cdot (\beta_1 \cdot \gamma_1)} \big(\mathbf{a}'\diamond (\mathbf{b}' \diamond \mathbf{c}') \big).
\end{align*}
By induction hypothesis and $(\Omega,\leftarrow, \rightarrow,\lhd, \rhd, \cdot ,\lambda)$ being a $\lambda$-$\ets$, $(\mathbf{a} \diamond \mathbf{b}) \diamond \mathbf{c}=\mathbf{a} \diamond (\mathbf{b} \diamond \mathbf{c})$. Hence $(\shh(A), \diamond)$ is an associative algebra with identity $1_A$.
\end{proof}

For each $\omega \in \Omega$, define a linear map $P_{\omega}: \shh(A) \rightarrow \shh(A), \mathbf{a} \mapsto 1_A \ot_{\omega} \mathbf{a}$.
If further $A$ is a commutative algebra and $(\Omega, \leftarrow, \rightarrow, \lhd, \rhd, \cdot,\lambda)$ is a commutative $\lambda$-$\ets$, we get the following result:

\begin{prop} \label{propcommRB}
If $A$ is a commutative algebra with identity $1_A$ and $(\Omega, \leftarrow, \rightarrow, \lhd, \rhd, \cdot,\lambda)$ is a commutative $\lambda$-$\ets$, then $(\shh(A), \diamond,(P_{\omega})_{\omega \in \Omega})$ is the free commutative $\Omega$-Rota-Baxter algebra generated by $A$.
\end{prop}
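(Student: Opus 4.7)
The plan is to establish three things: that $(\shh(A), \diamond, (P_\omega)_{\omega \in \Omega})$ is an $\Omega$-Rota-Baxter algebra, that it is commutative, and that it satisfies the required universal property. Associativity of $\diamond$ is already in hand from the preceding lemma, so the first target is the Rota-Baxter identity. For pure tensors $\mathbf{a}, \mathbf{b} \in \shh(A)$ and $\alpha, \beta \in \Omega$, observe that $P_\alpha(\mathbf{a}) \diamond P_\beta(\mathbf{b}) = (1_A \ot_\alpha \mathbf{a}) \diamond (1_A \ot_\beta \mathbf{b})$ falls into the fourth clause of (\ref{eq:shuffle}), and its expansion matches term-by-term the required identity $P_{\alpha \rightarrow \beta}(P_{\alpha \rhd \beta}(\mathbf{a}) \diamond \mathbf{b}) + P_{\alpha \leftarrow \beta}(\mathbf{a} \diamond P_{\alpha \lhd \beta}(\mathbf{b})) + \lambda_{\alpha, \beta} P_{\alpha \cdot \beta}(\mathbf{a} \diamond \mathbf{b})$, so the Rota-Baxter axiom drops out directly from the definition of $\diamond$.

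Commutativity $\mathbf{a} \diamond \mathbf{b} = \mathbf{b} \diamond \mathbf{a}$ is then established by induction on $\ell(\mathbf{a}) + \ell(\mathbf{b})$. When at least one factor has length $1$, commutativity of $A$ yields the result immediately from the first three clauses of (\ref{eq:shuffle}). In the recursive case, the assumption that $\Omega$ is commutative (i.e. $\Omega = \Omega^{op}$) gives $\alpha_1 \rightarrow \beta_1 = \beta_1 \leftarrow \alpha_1$, $\alpha_1 \rhd \beta_1 = \beta_1 \lhd \alpha_1$, $\alpha_1 \cdot \beta_1 = \beta_1 \cdot \alpha_1$ and $\lambda_{\alpha_1, \beta_1} = \lambda_{\beta_1, \alpha_1}$, which combined with the inductive hypothesis makes the three recursive summands defining $\mathbf{a} \diamond \mathbf{b}$ match those defining $\mathbf{b} \diamond \mathbf{a}$ term by term.

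For freeness, given a commutative $\Omega$-Rota-Baxter algebra $(R, \ast, (Q_\omega)_{\omega \in \Omega})$ of weight $\lambda$ and a commutative algebra morphism $f : A \rightarrow R$, I define $\bar{f} : \shh(A) \rightarrow R$ on pure tensors by induction on length: set $\bar{f}(v_0) := f(v_0)$ for $v_0 \in A$, and
\[
\bar{f}(v_0 \ot_{\omega_1} v_1 \ot_{\omega_2} \cdots \ot_{\omega_n} v_n) := f(v_0) \ast Q_{\omega_1}\bigl(\bar{f}(v_1 \ot_{\omega_2} \cdots \ot_{\omega_n} v_n)\bigr),
\]
extending linearly. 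By construction $\bar{f}$ restricts to $f$ on length-one tensors, and since $\bar{f}(P_\omega(\mathbf{a})) = \bar{f}(1_A \ot_\omega \mathbf{a}) = f(1_A) \ast Q_\omega(\bar{f}(\mathbf{a})) = Q_\omega(\bar{f}(\mathbf{a}))$, the map intertwines $P_\omega$ with $Q_\omega$. Uniqueness follows because any $\Omega$-Rota-Baxter morphism extending $f$ must satisfy the same recursion.

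The main obstacle is verifying that $\bar{f}$ is multiplicative, i.e. $\bar{f}(\mathbf{a} \diamond \mathbf{b}) = \bar{f}(\mathbf{a}) \ast \bar{f}(\mathbf{b})$, which I would prove by induction on $\ell(\mathbf{a}) + \ell(\mathbf{b})$. The cases where one factor has length $1$ follow straight from the first three clauses of (\ref{eq:shuffle}) and the recursive formula for $\bar{f}$. The crux is the case $\mathbf{a} = a_0 \ot_{\alpha_1} \mathbf{a}'$, $\mathbf{b} = b_0 \ot_{\beta_1} \mathbf{b}'$ both of length at least two: one writes
\[
\bar{f}(\mathbf{a}) \ast \bar{f}(\mathbf{b}) = f(a_0) f(b_0) \ast \bigl(Q_{\alpha_1}(\bar{f}(\mathbf{a}')) \ast Q_{\beta_1}(\bar{f}(\mathbf{b}'))\bigr),
\]
where commutativity of $R$ has been used to pull the prefactors $f(a_0)$ and $f(b_0)$ together, then applies the $\Omega$-Rota-Baxter identity in $R$ to the parenthesized product, and finally invokes the inductive hypothesis to move each inner $\diamond$ through $\bar{f}$. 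The three terms obtained match exactly the image under $\bar{f}$ of the fourth clause of (\ref{eq:shuffle}) expanding $\mathbf{a} \diamond \mathbf{b}$, closing the induction and completing the proof of the universal property.
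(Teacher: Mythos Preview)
Your proof is correct and follows essentially the same approach as the paper: both verify the Rota-Baxter identity directly from the fourth clause of (\ref{eq:shuffle}), prove commutativity by induction on $\ell(\mathbf{a})+\ell(\mathbf{b})$ using commutativity of $A$ and of the $\lambda$-$\ets$ $\Omega$, and establish freeness by defining $\bar{f}$ via the same length-recursion. In fact you are more thorough than the paper here, since you spell out the inductive verification that $\bar{f}$ is multiplicative (using commutativity of $R$ and the Rota-Baxter identity in $R$), whereas the paper's proof of this proposition merely asserts that the extension is well-defined and unique, deferring the detailed multiplicativity argument to the proof of the subsequent Theorem~\ref{theocomRB}.
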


\begin{proof}
For $\mathbf{a}, \mathbf{b} \in \shh(A)$ and $\alpha, \beta \in \Omega$,
\begin{align*}
&\ P_{\alpha}(\mathbf{a}) \diamond P_{\beta} (\mathbf{b})=(1_A \ot_{\alpha} \mathbf{a}) \diamond (1_A \ot_{\beta} \mathbf{b})\\
=&\ 1_A \ot_{\alpha \rightarrow \beta} \big((1 \ot_{\alpha \rhd \beta} \mathbf{a}) \diamond \mathbf{b} \big)+ 1 \ot_{\alpha \leftarrow \beta} \big(\mathbf{a} \diamond (1_A \ot_{\alpha \lhd \beta} \mathbf{b}) \big)\\
&\ +\lambda_{\alpha , \beta} 1_A \ot_{\alpha \cdot \beta}(\mathbf{a} \diamond \mathbf{b})\\
=&\ P_{\alpha \rightarrow \beta} (P_{\alpha \rhd \beta}(\mathbf{a}) \diamond \mathbf{b})+ P_{\alpha \leftarrow \beta} (\mathbf{a} \diamond P_{\alpha \lhd \beta}(\mathbf{b}))+ \lambda_{\alpha , \beta} P_{\alpha \cdot \beta} (\mathbf{a} \diamond \mathbf{b}),
\end{align*}
hence $\sh(A)$ is an $\Omega$-Rota-Baxter algebra. Next we show
\begin{align}
\mathbf{a} \diamond \mathbf{b}=\mathbf{b} \diamond \mathbf{a}
\label{eq:com}
\end{align}
by induction on $\ell(\mathbf{a})+ \ell(\mathbf{b})$. If $\ell(\mathbf{a})+ \ell(\mathbf{b})=2$, then $\ell(\mathbf{a})=\ell(\mathbf{b})=1$ and
\begin{align*}
\mathbf{a} \diamond \mathbf{b}=a_0 \diamond b_0=a_0b_0=b_0a_0=b_0 \diamond a_0=\mathbf{b} \diamond \mathbf{a}.
\end{align*}
Suppose Eq.~(\ref{eq:com}) holds for $\ell(\mathbf{a})+ \ell(\mathbf{b})< p$, where $p \geq 2$ is a fixed integer. We consider the case of $\ell(\mathbf{a})+\ell(\mathbf{b})=p+1$. If one of $\ell(\mathbf{a}), \ell(\mathbf{b})$ is equal to 1, then Eq.~(\ref{eq:com}) holds directly. We assume that $\mathbf{a}=a_0 \ot_{\alpha_1} \mathbf{a}', \mathbf{b}=b_0 \ot_{\beta_1} \mathbf{b}'$, then
\begin{align*}
&\ \mathbf{a} \diamond \mathbf{b}=(a_0 \ot_{\alpha_1} \mathbf{a}') \diamond (b_0 \ot_{\beta_1} \mathbf{b}')\\
=&\ a_0b_0 \ot_{\alpha_1 \rightarrow \beta_1} \big( (1_A \ot_{\alpha_1 \rhd \beta_1} \mathbf{a}') \diamond \mathbf{b}' \big)+ a_0b_0 \ot_{\alpha_1 \leftarrow \beta_1} \big(\mathbf{a}' \diamond (1_A \ot_{\alpha_1 \lhd \beta_1} \mathbf{b}') \big)+ \lambda_{\alpha_1 , \beta_1} a_0b_0 \ot_{\alpha_1 \cdot \beta_1} (\mathbf{a}' \diamond \mathbf{b}')\\
=&\ b_0a_0 \ot_{\alpha_1 \rightarrow \beta_1} \big( (1_A \ot_{\alpha_1 \rhd \beta_1} \mathbf{a}') \diamond \mathbf{b}' \big)+ b_0a_0 \ot_{\alpha_1 \leftarrow \beta_1} \big(\mathbf{a}' \diamond (1_A \ot_{\alpha_1 \lhd \beta_1} \mathbf{b}') \big)+ \lambda_{\alpha_1 , \beta_1} b_0a_0 \ot_{\alpha_1 \cdot \beta_1} (\mathbf{a}' \diamond \mathbf{b}')\\
&\ \hspace{7cm} \text{(by $A$ being a commutative algebra)}\\
=&\ b_0a_0 \ot_{\alpha_1 \rightarrow \beta_1} \big( \mathbf{b}' \diamond(1_A \ot_{\alpha_1 \rhd \beta_1} \mathbf{a}') \big)+ b_0a_0 \ot_{\alpha_1 \leftarrow \beta_1} \big((1_A \ot_{\alpha_1 \lhd \beta_1} \mathbf{b}')\diamond \mathbf{a}' \big)+ \lambda_{\alpha_1 , \beta_1} b_0a_0 \ot_{\alpha_1 \cdot \beta_1} ( \mathbf{b}' \diamond\mathbf{a}')\\
&\ \hspace{7cm} \text{(by the induction hypothesis)}\\
=&\  b_0a_0 \ot_{\beta_1 \leftarrow \alpha_1} \big( \mathbf{b}' \diamond(1_A \ot_{\beta_1 \lhd \alpha_1} \mathbf{a}') \big)+ b_0a_0 \ot_{\beta_1 \rightarrow \alpha_1} \big((1_A \ot_{\beta_1 \rhd \alpha_1} \mathbf{b}')\diamond \mathbf{a}' \big)+ \lambda_{\beta_1 , \alpha_1} b_0a_0 \ot_{\beta_1 \cdot \alpha_1} ( \mathbf{b}' \diamond\mathbf{a}')\\
&\ \hspace{7cm} \text{(by $\Omega$ being commutative)}\\
=&\ (b_0 \ot_{\beta_1} \mathbf{b}') \diamond (a_0 \ot_{\alpha_1} \mathbf{a}')=\mathbf{b} \ot \mathbf{a}.
\end{align*}
Hence $(\shh(A), \diamond)$ is a commutative algebra.\\

Let $(R,\cdot,(P_{\omega})_{\omega \in \Omega})$ be a commutative $\Omega$-Rota-Baxter algebra and $f : A \rightarrow  R$ a commutative algebra homomorphism. We extend $f$ to
an $\Omega$-Rota-Baxter algebra morphism $\overline{f}: \shh(A) \rightarrow R$ as follows: for $\mathbf{a} \in \shh(A)$, we define $\overline{f}(a)$ by induction on $\ell(\mathbf{a})$. If $\ell(\mathbf{a})=1$, then define $\overline{f}(\mathbf{a})=f(\mathbf{a})$. Suppose $\overline{f}(\mathbf{a})$ has been defined for all $\mathbf{a}$ with $\ell(\mathbf{a}) \leq p$, where $p \geq 1$ is a fixed integer. Consider the case of $\ell(\mathbf{a})=p+1$. We suppose that $\mathbf{a}=a_0 \ot_{\alpha_1} \mathbf{a}'$,
and we then put:
\begin{align*}
\overline{f}(\mathbf{a}) := f(a_0) \cdot P_{\alpha_1}(\overline{f}(\mathbf{a}')).
\end{align*}

We can get that it is the unique way to extend $f$ as an $\Omega$-Rota-Baxter algebra morphism. Hence $(\shh(A), \diamond )$ is the free commutative $\Omega$-Rota-Baxter algebra generated by $A$.
\end{proof}

Let us assume that $A$ is unitary. We denote its unit by $1_A$.
For each $\omega \in \Omega$, define a linear map $P_{\omega}: \sh(A) \rightarrow \sh(A), \mathbf{a} \mapsto 1_A \ot_{\omega} \mathbf{a}$.

\begin{prop}
If $A$ is a unitary commutative algebra and $(\Omega, \leftarrow, \rightarrow, \lhd, \rhd, \cdot,\lambda)$
 is a commutative $\lambda$-$\ets$, then $(\sh(A), \diamond,(P_{\omega})_{\omega \in \Omega})$
is a commutative $\Omega$-Rota-Baxter algebra.
\end{prop}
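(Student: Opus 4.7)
The plan is to derive this proposition essentially as the commutative algebra part of Proposition \ref{propcommRB}, now phrased on the possibly reduced space $\sh(A)$ where the unit $1_A$ of $A$ serves as the algebra unit. The substantive work — associativity, commutativity, and the Rota-Baxter identity — all transfers from the computations already carried out for $\shh(A)$.

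First, I would check that $\sh(A)$ is stable under both the product $\diamond$ defined by formula (\ref{eq:shuffle}) and the operators $P_\omega : \mathbf{a} \mapsto 1_A \ot_\omega \mathbf{a}$; this is immediate from the recursive formula, since at each step the scalar coefficients are in $\bfk$ and the tensor factors $1_A \ot_\gamma (\cdot)$ remain in $\sh(A)$. Associativity of $\diamond$ then follows by the same inductive argument as in the preceding lemma for $\shh(A)$, since the $\lambda$-$\ets$ axioms drive the computation without reference to where the tensor factors live. Commutativity follows by the inductive argument given in the proof of Proposition \ref{propcommRB}, using commutativity of $A$ in the scalar step $a_0 b_0 = b_0 a_0$ and commutativity of $\Omega$ (the equalities $\alpha \rightarrow \beta = \beta \leftarrow \alpha$, $\alpha \rhd \beta = \beta \lhd \alpha$, $\alpha \cdot \beta = \beta \cdot \alpha$, and $\lambda_{\alpha,\beta} = \lambda_{\beta,\alpha}$) to interchange the typed factors at each inductive step.

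For the Rota-Baxter identity, I would fix $\mathbf{a}, \mathbf{b} \in \sh(A)$ and $\alpha, \beta \in \Omega$, and apply the fourth case of (\ref{eq:shuffle}) to the length-$\geq 2$ tensors $P_\alpha(\mathbf{a}) = 1_A \ot_\alpha \mathbf{a}$ and $P_\beta(\mathbf{b}) = 1_A \ot_\beta \mathbf{b}$. Using $1_A \cdot 1_A = 1_A$ one obtains
\begin{align*}
P_\alpha(\mathbf{a}) \diamond P_\beta(\mathbf{b}) ={}& 1_A \ot_{\alpha \rightarrow \beta}\bigl((1_A \ot_{\alpha \rhd \beta} \mathbf{a}) \diamond \mathbf{b}\bigr) + 1_A \ot_{\alpha \leftarrow \beta}\bigl(\mathbf{a} \diamond (1_A \ot_{\alpha \lhd \beta} \mathbf{b})\bigr) \\
& + \lambda_{\alpha,\beta}\, 1_A \ot_{\alpha \cdot \beta}(\mathbf{a} \diamond \mathbf{b}),
\end{align*}
which, rewritten in terms of the operators $P_\omega$, is exactly the $\Omega$-Rota-Baxter relation of weight $\lambda$. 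Extending bilinearly completes the verification. The only real caveat — the ``main obstacle'', such as it is — is the bookkeeping of closure of $\sh(A)$ under $\diamond$ and $P_\omega$; once this is in place, all the algebraic content is borrowed directly from the preceding lemma and the first half of the proof of Proposition \ref{propcommRB}, and no new identity on $(\Omega, \leftarrow, \rightarrow, \lhd, \rhd, \cdot, \lambda)$ is required beyond those ensuring it is a commutative $\lambda$-$\ets$.
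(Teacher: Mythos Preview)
Your proposal is correct and follows essentially the same approach as the paper: the paper verifies the $\Omega$-Rota-Baxter identity by a direct computation of $P_\alpha(\mathbf{a})\diamond P_\beta(\mathbf{b})$ exactly as you do, and then proves commutativity by the same induction on $\ell(\mathbf{a})+\ell(\mathbf{b})$, using commutativity of $A$ and of the $\lambda$-$\ets$ $\Omega$. The only difference is cosmetic: the paper simply repeats the commutativity computation verbatim rather than referring back to Proposition~\ref{propcommRB}, and it does not explicitly mention closure or associativity (these being inherited from the preceding lemma).
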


\begin{proof}
For $\mathbf{a}, \mathbf{b} \in \sh(A)$ and $\alpha, \beta \in \Omega$,
\begin{align*}
&\ P_{\alpha}(\mathbf{a}) \diamond P_{\beta} (\mathbf{b})=(1_A \ot_{\alpha} \mathbf{a}) \diamond (1_A \ot_{\beta} \mathbf{b})\\
=&\ 1_A \ot_{\alpha \rightarrow \beta} \big((1_A \ot_{\alpha \rhd \beta} \mathbf{a}) \diamond \mathbf{b} \big)+ 1_A \ot_{\alpha \leftarrow \beta} \big(\mathbf{a} \diamond (1_A \ot_{\alpha \lhd \beta} \mathbf{b}) \big)\\
&\ +\lambda_{\alpha , \beta} 1_A \ot_{\alpha \cdot \beta}(\mathbf{a} \diamond \mathbf{b})\\
=&\ P_{\alpha \rightarrow \beta} (P_{\alpha \rhd \beta}(\mathbf{a}) \diamond \mathbf{b})+ P_{\alpha \leftarrow \beta} (\mathbf{a} \diamond P_{\alpha \lhd \beta}(\mathbf{b}))+ \lambda_{\alpha , \beta} P_{\alpha \cdot \beta} (\mathbf{a} \diamond \mathbf{b}),
\end{align*}
hence $\sh(A)$ is an $\Omega$-Rota-Baxter algebra. Next we show
\begin{align}
\mathbf{a} \diamond \mathbf{b}=\mathbf{b} \diamond \mathbf{a}
\label{eq:com2}
\end{align}
by induction on $\ell(\mathbf{a})+ \ell(\mathbf{b})$. If $\ell(\mathbf{a})+ \ell(\mathbf{b})=2$, then $\ell(\mathbf{a})=\ell(\mathbf{b})=1$ and
\begin{align*}
\mathbf{a} \diamond \mathbf{b}=a_0 \diamond b_0=a_0b_0=b_0a_0=b_0 \diamond a_0=\mathbf{b} \diamond \mathbf{a}.
\end{align*}
Suppose Eq.~(\ref{eq:com2}) holds for $\ell(\mathbf{a})+ \ell(\mathbf{b})< p$, where $p \geq 2$ is a fixed integer. We consider the case of $\ell(\mathbf{a})+\ell(\mathbf{b})=p+1$. If one of $\ell(\mathbf{a}), \ell(\mathbf{b})$ is equal to 1, then Eq.~(\ref{eq:com2}) holds directly. So assume $\mathbf{a}=a_0 \ot_{\alpha_1} \mathbf{a}', \mathbf{b}=b_0 \ot_{\beta_1} \mathbf{b}'$, then
\begin{align*}
&\ \mathbf{a} \diamond \mathbf{b}=(a_0 \ot_{\alpha_1} \mathbf{a}') \diamond (b_0 \ot_{\beta_1} \mathbf{b}')\\
=&\ a_0b_0 \ot_{\alpha_1 \rightarrow \beta_1} \big( (1_A \ot_{\alpha_1 \rhd \beta_1} \mathbf{a}') \diamond \mathbf{b}' \big)+ a_0b_0 \ot_{\alpha_1 \leftarrow \beta_1} \big(\mathbf{a}' \diamond (1_A \ot_{\alpha_1 \lhd \beta_1} \mathbf{b}') \big)+ \lambda_{\alpha_1 , \beta_1} a_0b_0 \ot_{\alpha_1 \cdot \beta_1} (\mathbf{a}' \diamond \mathbf{b}')\\
=&\ b_0a_0 \ot_{\alpha_1 \rightarrow \beta_1} \big( (1_A \ot_{\alpha_1 \rhd \beta_1} \mathbf{a}') \diamond \mathbf{b}' \big)+ b_0a_0 \ot_{\alpha_1 \leftarrow \beta_1} \big(\mathbf{a}' \diamond (1_A \ot_{\alpha_1 \lhd \beta_1} \mathbf{b}') \big)+ \lambda_{\alpha_1 , \beta_1} b_0a_0 \ot_{\alpha_1 \cdot \beta_1} (\mathbf{a}' \diamond \mathbf{b}')\\
&\ \hspace{7cm} \text{(by $A$ being a commutative algebra)}\\
=&\ b_0a_0 \ot_{\alpha_1 \rightarrow \beta_1} \big( \mathbf{b}' \diamond(1_A \ot_{\alpha_1 \rhd \beta_1} \mathbf{a}') \big)+ b_0a_0 \ot_{\alpha_1 \leftarrow \beta_1} \big((1_A \ot_{\alpha_1 \lhd \beta_1} \mathbf{b}')\diamond \mathbf{a}' \big)+ \lambda_{\alpha_1 , \beta_1} b_0a_0 \ot_{\alpha_1 \cdot \beta_1} ( \mathbf{b}' \diamond\mathbf{a}')\\
&\ \hspace{7cm} \text{(by the induction hypothesis)}\\
=&\  b_0a_0 \ot_{\beta_1 \leftarrow \alpha_1} \big( \mathbf{b}' \diamond(1_A \ot_{\beta_1 \lhd \alpha_1} \mathbf{a}') \big)+ b_0a_0 \ot_{\beta_1 \rightarrow \alpha_1} \big((1_A \ot_{\beta_1 \rhd \alpha_1} \mathbf{b}')\diamond \mathbf{a}' \big)+ \lambda_{\beta_1 , \alpha_1} b_0a_0 \ot_{\beta_1 \cdot \alpha_1} ( \mathbf{b}' \diamond\mathbf{a}')\\
&\ \hspace{7cm} \text{(by $\Omega$ being commutative)}\\
=&\ (b_0 \ot_{\beta_1} \mathbf{b}') \diamond (a_0 \ot_{\alpha_1} \mathbf{a}')=\mathbf{b} \ot \mathbf{a}.
\end{align*}
Hence $(\sh(A), \diamond)$ is a commutative algebra.
\end{proof}

Let $A$ be a commutative algebra. We put $uA=\mathbf{k}\oplus A$ and give it a product defined by
\[(\lambda+a)(\mu+b)=\lambda\mu+(\lambda b+\mu a+ab).\]
Then $uA$ is a commutative unitary algebra and its unit $1_A$ is the unit $1$ of $\mathbf{k}$.

\begin{theorem} \label{theocomRB}
We put
\[\sh'(A)=A\oplus\bigoplus_{n \geq 2} \underbrace{uA\ot (\bfk \Omega) \ot \cdots \ot (\bfk \Omega) \ot uA}_{\text{$n$'s $V$ and $(n-1)$'s $(\bfk \Omega)$}}.\]
Then $\sh'(A)$ is the free commutative $\Omega$-Rota-Baxter algebra generated by the algebra $A$.
\end{theorem}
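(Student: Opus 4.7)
The strategy is to realise $\sh'(A)$ as an $\Omega$-Rota-Baxter subalgebra of $\shh(uA)$ and then reduce the universal property to the unital case already settled by Proposition \ref{propcommRB}. The splitting $uA=\bfk\cdot 1_A\oplus A$ yields a vector-space decomposition $\shh(uA)=\bfk\cdot 1_A\oplus\sh'(A)$ by separating the length-one component; inspecting the four cases in~(\ref{eq:shuffle}) shows that whenever one of $\mathbf{a}$ or $\mathbf{b}$ lies in $\sh'(A)$, the outermost factor "$a_0b_0$" remains in $A$ and the recursive calls stay inside $\sh'(A)$, so $\sh'(A)$ is a two-sided ideal of $\shh(uA)$. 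Since $P_\omega(\mathbf{a})=1_A\otimes_\omega\mathbf{a}$ has length $\geq 2$ for any $\mathbf{a}\in\sh'(A)$, one has $P_\omega(\sh'(A))\subseteq\sh'(A)$. Consequently $(\sh'(A),\diamond,(P_\omega)_{\omega\in\Omega})$ is a commutative $\Omega$-Rota-Baxter subalgebra of $\shh(uA)$ containing $A$ via the obvious inclusion.

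For existence in the universal property, let $R$ be a commutative $\Omega$-Rota-Baxter algebra, taken unital so as to apply the previous result, and let $f\colon A\to R$ be a commutative algebra morphism. Extend $f$ to a unital commutative algebra morphism $uf\colon uA\to R$ by setting $uf(1_A)=1_R$. Proposition \ref{propcommRB} then yields a unique $\Omega$-Rota-Baxter morphism $F\colon\shh(uA)\to R$ with $F|_{uA}=uf$, and we define $\overline{f}:=F|_{\sh'(A)}\colon\sh'(A)\to R$. By the subalgebra property just established, $\overline{f}$ is an $\Omega$-Rota-Baxter morphism with $\overline{f}|_A=f$.

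For uniqueness, any $\Omega$-Rota-Baxter morphism $\overline{g}\colon\sh'(A)\to R$ extending $f$ is prolonged to $G\colon\shh(uA)\to R$ by $G(1_A):=1_R$ and $G|_{\sh'(A)}=\overline{g}$, using the direct-sum decomposition. One checks that $G$ is a unital algebra morphism and an $\Omega$-Rota-Baxter morphism: the compatibility $G(P_\omega(\mathbf{a}))=P_\omega^R(G(\mathbf{a}))$ is automatic for $\mathbf{a}\in\sh'(A)$, while on $1_A$ it reduces to $\overline{g}(1_A\otimes_\omega 1_A)=P_\omega^R(1_R)$, an identity that I would extract from the relations $a\diamond(1_A\otimes_\omega 1_A)=a\otimes_\omega 1_A$ and $(1_A\otimes_\alpha a)\diamond(1_A\otimes_\beta 1_A)=\cdots$ combined with the $\Omega$-Rota-Baxter axiom in $R$. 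The uniqueness statement of Proposition \ref{propcommRB} applied to $uf$ then forces $G=F$, whence $\overline{g}=\overline{f}$. The main obstacle is precisely this last verification: controlling the value of $\overline{g}$ on the "all-unit" words $1_A\otimes_{\omega_1}\cdots\otimes_{\omega_n}1_A$ from the algebra-morphism axioms alone, which requires careful bookkeeping in the shuffle product formula and the $\Omega$-Rota-Baxter relation in $R$.
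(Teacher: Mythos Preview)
Your strategy differs from the paper's. The paper does not pass through $\shh(uA)$ at all: it defines $\overline{f}$ directly on $\sh'(A)$ by the recursive formula $\overline{f}(a_0\ot_{\alpha_1}\mathbf{a}')=f(a_0)\cdot P_{\alpha_1}(\overline{f}(\mathbf{a}'))$ (after extending $f$ to $uA$ by $1_A\mapsto 1_R$), checks $\overline{f}\circ P_\alpha=P_\alpha\circ\overline{f}$ in one line, and then re-runs the four-case induction on $\ell(\mathbf{a})+\ell(\mathbf{b})$ to verify $\overline{f}(\mathbf{a}\diamond\mathbf{b})=\overline{f}(\mathbf{a})\cdot\overline{f}(\mathbf{b})$. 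Uniqueness is then read off from the same recursion: writing $\mathbf{a}=a_0\diamond P_{\alpha_1}(\mathbf{a}')$ when $a_0\in A$ and $\mathbf{a}=P_{\alpha_1}(\mathbf{a}')$ when $a_0=1_A$ forces the value of any extending morphism by induction on length.

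Your route has a genuine advantage for \emph{existence}: once you observe $\shh(uA)=\bfk 1_A\oplus\sh'(A)$ with $\sh'(A)$ a $P_\omega$-stable ideal, the restriction of the morphism $F$ supplied by Proposition~\ref{propcommRB} is an $\Omega$-Rota-Baxter morphism for free, and you avoid redoing the multiplicativity induction. This is a real simplification over the paper.

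The gap you flag in \emph{uniqueness}, however, is not just a technicality of your method --- it is exactly the point where your detour costs you. To apply the uniqueness clause of Proposition~\ref{propcommRB} you must show that the prolongation $G$ of $\overline{g}$ commutes with $P_\omega$ on $1_A$, i.e.\ that $\overline{g}(1_A\ot_\omega 1_A)=P_\omega^R(1_R)$; but $1_A\ot_\omega 1_A$ is neither a product of elements of $\sh'(A)$ nor in the image of $P_\omega|_{\sh'(A)}$, so the morphism axioms on $\sh'(A)$ alone do not pin down its image. The manipulations you sketch (multiplying by $a$ or by $1_A\ot_\alpha a$) only produce relations involving $\overline{g}(1_A\ot_\omega 1_A)$ alongside other unknown ``all-unit'' values, and do not close. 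The paper sidesteps this by never leaving $\sh'(A)$: its inductive uniqueness argument works one tensor factor at a time inside $\sh'(A)$ rather than trying to lift $\overline{g}$ to the larger algebra. If you want to keep your approach, the cleanest fix is to abandon the lift and instead argue uniqueness directly on $\sh'(A)$ by the same length induction the paper uses.
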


\begin{proof}
Let $(R,\cdot,(P_{\omega})_{\omega \in \Omega})$ be a commutative $\Omega$-Rota-Baxter algebra and $f : A \rightarrow  R$ a  (nonunitary) algebra homomorphism. We extend $f$, first from $uA$ to $R$
as a unitary algebra morphism by sending $1_{uA}$ to $1_R$, then as
an $\Omega$-Rota-Baxter algebra morphism $\overline{f}: \sh'(A) \rightarrow R$ as follows:
for $\mathbf{a} \in \sh(A)$, we define $\overline{f}(a)$ by induction on $\ell(\mathbf{a})$. If $\ell(\mathbf{a})=1$, then define $\overline{f}(\mathbf{a})=f(\mathbf{a})$. Suppose $\overline{f}(\mathbf{a})$ has been defined for all $\mathbf{a}$ with $\ell(\mathbf{a}) \leq p$, where $p \geq 1$ is a fixed integer. Consider the case of $\ell(\mathbf{a})=p+1$. Suppose $\mathbf{a}=a_0 \ot_{\alpha_1} \mathbf{a}'$, then define
\begin{align*}
\overline{f}(\mathbf{a}) := f(a_0) \cdot P_{\alpha_1}(\overline{f}(\mathbf{a}')).
\end{align*}
For any $\mathbf{a}\in \sh'(A)$ and for any $\alpha \in \Omega$:
\[\overline{f}\circ P_\alpha(\mathbf{a})
=\overline{f}(1_A\otimes_\alpha \mathbf{a})
=1_B\cdot P_\alpha(\overline{f}(\mathbf{a}))=P_\alpha\circ\overline{f}(\mathbf{a}).\]
Let us prove that this is an algebra morphism. Let $\mathbf{a},\mathbf{b}\in \sh'(A)$, let us prove that
$\overline{f}(\mathbf{a}\diamond \mathbf{b})=\overline{f}(\mathbf{a})\overline{f}(\mathbf{b})$
by induction on $n=\ell(\mathbf{a})+\ell(\mathbf{b})$. If $\ell(\mathbf{a})=\ell(\mathbf{b})=1$, then
\[\overline{f}(\mathbf{a}\diamond \mathbf{b})=\overline{f}(a_0b_0)=f(a_0b_0)
=f(a_0)\cdot f(b_0)=\overline{f}(\mathbf{a})\cdot \overline{f}(\mathbf{b}).\]
If $\ell(\overline{a})=1$ and $\ell(\overline{b})>1$, then
\begin{align*}
\overline{f}(\mathbf{a}\diamond \mathbf{b})
&=\overline{f}(a_0b_0\otimes_{\alpha_1}\mathbf{a}')\\
&=f(a_0b_0)\cdot P_{\alpha_1}\circ \overline{f}(\mathbf{a}')\\
&=f(a_0)\cdot f(b_0)\cdot P_{\alpha_1}\circ \overline{f}(\mathbf{a}')\\
&=\overline{f}(\mathbf{a})\cdot \overline{f}(\mathbf{b}).
\end{align*}
This is similar if  $\ell(\overline{a})>1$ and $\ell(\overline{b})=1$.
If  $\ell(\overline{a})>1$ and $\ell(\overline{b})>1$, then
\begin{align*}
\overline{f}(\mathbf{a}\diamond \mathbf{b})&=
\overline{f}(\ a_0b_0 \ot_{\alpha_1 \rightarrow \beta_1} ((1 \ot_{\alpha_1 \rhd \beta_1} \mathbf{a}') \diamond \mathbf{b}' ))
+\overline{f}(a_0b_0 \ot_{\alpha \leftarrow \beta_1} (\mathbf{a}' \diamond(1 \ot_{\alpha_1 \lhd \beta_1} \mathbf{b}') ))  \\
&\ +\overline{f}(\lambda_{\alpha_1 , \beta_1} a_0b_0 \ot_{\alpha_1 \cdot \beta_1}(\mathbf{a}' \diamond \mathbf{b}'))\\
&=f(\ a_0b_0)\cdot P_{\alpha_1 \rightarrow \beta_1}\circ \overline{f}((1 \ot_{\alpha_1 \rhd \beta_1} \mathbf{a}') \diamond \mathbf{b}' )
+f(a_0b_0)\cdot P_{\alpha \leftarrow \beta_1}\circ\overline{f}(\mathbf{a}' \diamond(1 \ot_{\alpha_1 \lhd \beta_1} \mathbf{b}') ))  \\
&+\lambda_{\alpha_1 , \beta_1} f(a_0b_0)\cdot P_{\alpha_1 \cdot \beta_1}\circ \overline{f}(\mathbf{a}' \diamond \mathbf{b}')\\
&=f(a_0)\cdot f(b_0)\cdot \overline{f}(
P_{\alpha_1 \rightarrow \beta_1}(P_{\alpha_1 \rhd \beta_1}( \mathbf{a}')\diamond \mathbf{b}' ))
+f(a_0)\cdot f(b_0)\cdot \overline{f}(
P_{\alpha \leftarrow \beta_1}(\mathbf{a}' \diamond P_{\alpha_1 \lhd \beta_1}(\mathbf{b}')))\\
&+\lambda_{\alpha_1 , \beta_1}f(a_0)\cdot f(b_0)\cdot \overline{f}(P_{\alpha_1 \cdot \beta_1}
(\mathbf{a}' \diamond \mathbf{b}'))\\
&=f(a_0)\cdot f(b_0)\cdot \overline{f}(P_{\alpha_1}(\mathbf{a}')P_{\beta_1}(\mathbf{b}'))\\
&=f(a_0)\cdot f(b_0)\cdot \overline{f}(P_{\alpha_1}(\mathbf{a}'))\cdot \overline{f}(P_{\beta_1}(\mathbf{b}'))
\hspace{1.5cm} \text{(by the induction hypothesis)}\\
&=f(a_0)\cdot f(b_0)\cdot P_{\alpha_1}\circ \overline{f}(\mathbf{a}')\cdot P_{\beta_1}\circ \overline{f}(\mathbf{b}')\\
&=f(a_0)\cdot P_{\alpha_1}\circ \overline{f}(\mathbf{a}')\cdot f(b_0)\cdot P_{\beta_1}\circ \overline{f}(\mathbf{b}')
\hspace{1.3cm} \text{(as $B$ is commutative)}\\
&=\overline{f}(\mathbf{a})\cdot \overline{f}(\mathbf{b}).
\end{align*}
We get that it is the unique way to extend $f$ as an $\Omega$-Rota-Baxter algebra morphism. Hence $\sh'(A)$
is the free commutative $\Omega$-Rota-Baxter algebra generated by $A$.
\end{proof}

\section{More results on $\lambda$-$\ets$ and $\ets$}

\subsection{Description in terms of linear and bilinear maps}

As in Lemma 5 of \cite{Foi20}, we obtain:
\begin{lemma}
Let $(\Omega,\leftarrow,\rightarrow,\lhd,\rhd,\cdot)$ be a set with five operations
and $\lambda=(\lambda_{\alpha,\beta})_{\alpha,\beta \in \Omega}$ be a family of elements in ${\bf k}$ indexed by $\Omega^2$ . We denote by $\K\Omega$
the vector space generated by $\Omega$. We put:
\begin{align*}
\varphi_\leftarrow&:\left\{\begin{array}{rcl}
\K\Omega^{\otimes 2}&\longrightarrow&\K\Omega^{\otimes 2}\\
\alpha\otimes \beta&\longrightarrow&\alpha \leftarrow \beta\otimes \alpha \lhd \beta,
\end{array}\right.\\
\varphi_\rightarrow&:\left\{\begin{array}{rcl}
\K\Omega^{\otimes 2}&\longrightarrow&\K\Omega^{\otimes 2}\\
\alpha \otimes \beta&\longrightarrow&\alpha \rightarrow \beta \otimes \alpha \rhd \beta,
\end{array}\right.\\
\psi_\cdot&:\left\{\begin{array}{rcl}
\K\Omega^{\otimes 2}&\longrightarrow&\K\Omega\\
\alpha \otimes \beta&\longrightarrow&\lambda_{\alpha, \beta} \alpha \cdot \beta.
\end{array}\right.
\end{align*}
Then $(\Omega,\leftarrow,\rightarrow,\lhd,\rhd,\cdot,\lambda)$ is a $\lambda$-$\ets$
if, and only if:
\begin{align}
\label{eq34dend} (\tau \otimes \id)\circ (\id \otimes \varphi_\leftarrow)\circ (\tau \otimes \id)\circ (\varphi_\rightarrow\otimes \id)
&= (\varphi_\rightarrow\otimes \id)\circ (\id \otimes \varphi_\leftarrow),\\
\label{eq35dend} (\id \otimes \varphi_\leftarrow)\circ (\tau \otimes \id)\circ (\id \otimes \varphi_\leftarrow)\otimes
(\tau \otimes \id)\circ (\varphi_\leftarrow \otimes \id)&=(\varphi_\leftarrow\otimes \id)\circ (\id \otimes \varphi_\leftarrow),\\
\label{eq36dend} (\id \otimes \varphi_\rightarrow)\circ (\tau \otimes \id)\circ (\id \otimes \varphi_\leftarrow)
\circ (\tau \otimes \id)\circ (\varphi_\leftarrow\otimes \id)&=(\varphi_\leftarrow\otimes \id)\circ (\id\otimes \varphi_\rightarrow),\\
\label{eq37dend} (\id \otimes \varphi_\leftarrow)\circ (\varphi_\rightarrow\otimes \id)\circ (\id \otimes \varphi_\rightarrow)
&=(\varphi_\rightarrow\otimes \id)\circ (\id \otimes \tau)\circ (\varphi_\leftarrow \otimes \id),\\
\label{eq38dend} (\id \otimes \varphi_\rightarrow)\circ (\varphi_\rightarrow\otimes \id)\circ (\id \otimes \varphi_\rightarrow)
&=(\varphi_\rightarrow\otimes \id)\circ (\id \otimes \tau)\circ (\varphi_\rightarrow \otimes \id),\\
\label{eq:equ1} \varphi_\rightarrow \circ (\id \otimes \psi_\cdot) &=(\psi_\cdot \otimes \id)\circ (\id \otimes \tau)\circ (\varphi_\rightarrow \otimes \id),\\
\label{eq:equ2}(\psi_\cdot\otimes \id)\circ (\id \otimes \tau)\circ (\id \otimes \varphi_\leftarrow)\circ (\tau \otimes \id)
\circ (\varphi_\leftarrow \otimes \id)&=\tau \circ \varphi_\leftarrow \circ (\id \otimes \psi_\cdot),\\
\label{eq:equ3} (\id \otimes \psi_\cdot)\circ (\varphi_\rightarrow \otimes \id)\circ (\id \otimes \varphi_\rightarrow)
&=\varphi_\rightarrow\circ (\psi_\cdot \otimes \id),\\
\label{eq:equ4} (\psi_\cdot\otimes \id)\circ (\id \otimes \tau)\circ (\varphi_\leftarrow\otimes \id)&=(\psi_\cdot \otimes \id)
\circ (\id \otimes \varphi_\rightarrow),\\
\label{eq:equ5} (\psi_\cdot \otimes \id)\circ (\id \otimes \varphi_\leftarrow)&=\varphi_\leftarrow\circ (\psi_\cdot \otimes \id),\\
\label{eq:equ6} \psi_\cdot\circ(\psi_\cdot \otimes \id)&=\psi_\cdot\circ(\id \otimes \psi_\cdot).
\end{align}
In particular, $\psi_\cdot$ is an associative product.
\end{lemma}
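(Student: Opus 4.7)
The plan is as follows. The first five identities (\ref{eq34dend})--(\ref{eq38dend}) involve only $\varphi_\leftarrow$ and $\varphi_\rightarrow$, so by Lemma~5 of \cite{Foi20} they are already known to be equivalent to the ten EDS axioms (\ref{EQ1})--(\ref{EQ10}) on $(\Omega,\leftarrow,\rightarrow,\lhd,\rhd)$; I do not rework this part. What remains is to show that (\ref{eq:equ1})--(\ref{eq:equ6}) are equivalent to the conjunction of the scalar equations (\ref{EQ11})--(\ref{EQ16}) with the conditional identities (a)--(f) of Definition~\ref{def:leds}.

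For each of (\ref{eq:equ1})--(\ref{eq:equ6}), I would evaluate both sides on an elementary tensor $\alpha\otimes\beta\otimes\gamma$ (respectively $\alpha\otimes\beta$ for (\ref{eq:equ6})) and compare. Each evaluation reduces to an identity of the form
\[\mu_1\, u_1 \otimes v_1 = \mu_2\, u_2 \otimes v_2 \quad \text{in } \K\Omega^{\otimes 2},\]
with scalars $\mu_1, \mu_2$ drawn from the family $\lambda$ and with $u_i, v_i \in \Omega$ obtained by applying suitable combinations of $\leftarrow, \rightarrow, \lhd, \rhd, \cdot$ to $\alpha, \beta, \gamma$. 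Since the elements of $\Omega$ form a basis of $\K\Omega$, such an equality holds if and only if $\mu_1=\mu_2$ and, whenever this common value is nonzero, $u_1=u_2$ and $v_1=v_2$ in $\Omega$. This is precisely the pattern encoded by the implications in Definition~\ref{def:leds}.

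Unwinding the six composites produces the expected dictionary: (\ref{eq:equ1}) packages (\ref{EQ11}) with (\ref{eq17})--(\ref{eq18}); (\ref{eq:equ2}) packages (\ref{EQ12}) with (\ref{eq19})--(\ref{eq20}); (\ref{eq:equ3}) packages (\ref{EQ14}) with (\ref{eq21})--(\ref{eq22}); (\ref{eq:equ4}) packages (\ref{EQ13}) with (\ref{eq23})--(\ref{eq24}); (\ref{eq:equ5}) packages (\ref{EQ15}) with (\ref{eq25})--(\ref{eq26}); and (\ref{eq:equ6}) packages (\ref{EQ16}) with (\ref{eq27}). For instance, evaluating both sides of (\ref{eq:equ1}) at $\alpha\otimes\beta\otimes\gamma$ gives
\[\lambda_{\beta,\gamma}\bigl(\alpha \rightarrow (\beta \cdot \gamma)\bigr) \otimes \bigl(\alpha \rhd (\beta \cdot \gamma)\bigr)=\lambda_{\alpha \rightarrow \beta,\gamma}\bigl((\alpha \rightarrow \beta)\cdot \gamma\bigr) \otimes (\alpha \rhd \beta),\]
which, by the basis argument, is equivalent to (\ref{EQ11}) together with (\ref{eq17}) and (\ref{eq18}) in the nonzero case.

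The final assertion, that $\psi_\cdot$ is associative, is then just a direct reading of (\ref{eq:equ6}) in the algebra $\K\Omega$. The only real obstacle in carrying out this plan is the clerical bookkeeping of unwinding six tensor composites and matching each to the right pair of scalar and conditional pointwise relations; no further idea beyond the basis argument is needed.
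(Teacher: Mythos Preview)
Your proposal is correct and follows exactly the paper's approach: cite Lemma~5 of \cite{Foi20} for (\ref{eq34dend})--(\ref{eq38dend}), then verify by direct evaluation on $\alpha\otimes\beta\otimes\gamma$ that each of (\ref{eq:equ1})--(\ref{eq:equ6}) encodes one scalar identity among (\ref{EQ11})--(\ref{EQ16}) together with the corresponding conditional clause (a)--(f). Your pairing of (\ref{eq:equ3}) with (\ref{EQ14}) and (\ref{eq:equ4}) with (\ref{EQ13}) is in fact the correct one (the paper's proof text swaps these two labels), as your sample computation and the hypotheses of conditions~(c) and~(d) confirm.
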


\begin{proof}
By Lemma~5 in~\cite{Foi20}, Eqs.~(\ref{eq34dend})-(\ref{eq38dend}) are equivalent to $(\Omega, \leftarrow, \rightarrow, \lhd ,\rhd)$ being an EDS. Moreover, direct computations prove that Eq.~(\ref{eq:equ1}) is equivalent to Eq.~(\ref{EQ11}) and condition~(a); Eq.~(\ref{eq:equ2}) is equivalent to Eq.~(\ref{EQ12}) and condition~(b); Eq.~(\ref{eq:equ3}) is equivalent to Eq.~(\ref{EQ13}) and condition~(c); Eq.~(\ref{eq:equ4}) is equivalent to Eq.~(\ref{EQ14}) and condition~(d); Eq.~(\ref{eq:equ5}) is equivalent to Eq.~(\ref{EQ15}) and condition~(e); Eq.~(\ref{eq:equ6}) is equivalent to Eq.~(\ref{EQ16}) and condition~(f) in Definition~\ref{def:leds}.
\end{proof}

Similarly, we obtain for $\ets$:

\begin{lemma}
Let $(\Omega,\leftarrow,\rightarrow,\lhd,\rhd,\ast,\cdot)$ be a set with six operations. We put:
\begin{align*}
\varphi_\leftarrow&:\left\{\begin{array}{rcl}
 \Omega^{ 2}&\longrightarrow& \Omega^{ 2}\\
(\alpha, \beta) &\longrightarrow& (\alpha \leftarrow \beta, \alpha \lhd \beta),
\end{array}\right.\\
\varphi_\rightarrow&:\left\{\begin{array}{rcl}
 \Omega^{2}&\longrightarrow& \Omega^{2}\\
(\alpha, \beta)&\longrightarrow& ( \alpha \rightarrow \beta, \alpha \rhd \beta),
\end{array}\right.\\
\varphi_\ast&:\left\{\begin{array}{rcl}
 \Omega^{ 2}&\longrightarrow& \Omega^{ 2}\\
(\alpha, \beta) &\longrightarrow& (\alpha\cdot \beta, \alpha \ast \beta).
\end{array}\right.
\end{align*}
Then $(\Omega,\leftarrow,\rightarrow,\lhd,\rhd,\ast,\cdot)$ is an $\ets$
if, and only if, (34)-(38) of \cite{Foi20} are satisfied and:
\begin{align}
\label{eq:equu1} 
(\varphi_\rightarrow\otimes \id)\circ (\id \otimes \varphi_\ast)&=
(\tau \otimes \id)\circ (\id \otimes \varphi_\ast)\circ (\tau \otimes \id)\circ (\varphi_\rightarrow \otimes \id),\\
%
\label{eq:equu2}
 (\varphi_\leftarrow \otimes \id)\circ (\id \otimes \varphi_\ast)&=
(\id \otimes \varphi_\ast)\circ (\tau \otimes \id)\circ(\id \otimes \varphi_\leftarrow)\circ (\tau \otimes \id)
\circ (\varphi_\leftarrow \otimes \id),\\
%
\label{eq:equu3} (\id \otimes \varphi_\ast)\circ (\varphi_\rightarrow \otimes \id)\circ (\id \otimes \varphi_\rightarrow)
&=(\varphi_\rightarrow \otimes \id)\circ (\id \otimes \tau)\circ  (\id \otimes \varphi_\ast),\\
%
\label{eq:equu4} (\id \otimes \varphi_\ast)\circ (\tau \otimes \id)\circ (\varphi_\leftarrow \otimes \id)
&=(\id \otimes \varphi_\ast)\circ (\tau \otimes \id)\circ (\id \otimes \tau)\circ (\id \otimes \varphi_\rightarrow),\\
\label{eq:equu5}(\varphi_\ast\otimes \id)\circ (\id \otimes \varphi_\leftarrow)&=(\tau \otimes \id)\circ
(\id \otimes \varphi_\leftarrow)\circ(\tau \otimes \id)\circ  (\varphi_\ast \otimes \id),\\
\label{eq:equu6}
(\varphi_\ast\otimes \id)\circ (\id \otimes \tau)\circ (\varphi_\ast\otimes \id)
&=(\id \otimes \tau)\circ(\varphi_\ast\otimes \id)\circ (\id \otimes \varphi_\ast).
\end{align}
\end{lemma}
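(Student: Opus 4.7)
The plan is to mirror the proof of the preceding lemma, replacing the $2{\to}1$ map $\psi_\cdot$ by the $2{\to}2$ map $\varphi_\ast$. First I would invoke Lemma~5 of \cite{Foi20} to conclude that Eqs.~(\ref{eq34dend})--(\ref{eq38dend}) are equivalent to $(\Omega,\leftarrow,\rightarrow,\lhd,\rhd)$ being an EDS, thereby taking care of the ten EDS axioms (\ref{EQ1})--(\ref{EQ10}). It then remains, assuming an EDS structure on $\Omega$, to show that the six equations (\ref{eq:equu1})--(\ref{eq:equu6}) are jointly equivalent to the eighteen remaining $\ets$ axioms, namely (\ref{EQ17}), (\ref{eq17}), (\ref{eq18}), (\ref{EQ20}), (\ref{eq19}), (\ref{eq20}), (\ref{EQ23}), (\ref{eq21}), (\ref{eq22}), (\ref{EQ26}), (\ref{eq23}), (\ref{eq24}), (\ref{EQ29}), (\ref{eq25}), (\ref{eq26}), (\ref{EQ32}), (\ref{EQ33}), and (\ref{eq27}).

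The strategy is, for each of the six equations, to evaluate both sides on a generic triple $(\alpha,\beta,\gamma)\in\Omega^3$ and to read off the three coordinate identities of the resulting equality in $\Omega^3$. Since $\varphi_\ast$ now keeps both $\cdot$ and $\ast$ alive as separate output coordinates (unlike $\psi_\cdot$, which collapses two inputs into one), each bilinear equation encodes exactly three scalar identities rather than the two per equation in the $\lambda$-$\ets$ version, so $6\times 3=18$ identities are produced, matching the count of axioms to be verified. A representative unpacking: applied to $(\alpha,\beta,\gamma)$, equation (\ref{eq:equu1}) yields on the left $\bigl((\alpha\rightarrow\beta)\cdot\gamma,\,\alpha\rhd\beta,\,(\alpha\rightarrow\beta)\ast\gamma\bigr)$ and on the right $\bigl(\alpha\rightarrow(\beta\cdot\gamma),\,\alpha\rhd(\beta\cdot\gamma),\,\beta\ast\gamma\bigr)$, and equating coordinate-by-coordinate recovers exactly (\ref{eq17}), (\ref{eq18}) and (\ref{EQ17}). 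Performing the same triple-wise bookkeeping for (\ref{eq:equu2})--(\ref{eq:equu6}) would match each with three of the remaining axioms.

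The main obstacle is purely combinatorial bookkeeping: one must unpack each composition of $\tau$, $\varphi_\leftarrow$, $\varphi_\rightarrow$ and $\varphi_\ast$ in precisely the right order, then verify that the eighteen resulting coordinate identities exhaust, without overlap, the list of target axioms. Since there is no evident symmetry between the six equations and no obvious shortcut, the verification reduces to six independent but routine computations, each analogous in spirit to the calculation already carried out for the $\lambda$-$\ets$ lemma, with no new mathematical idea required.
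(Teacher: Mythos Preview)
Your proposal is correct and follows essentially the same approach as the paper's proof: both invoke Lemma~5 of \cite{Foi20} for the EDS part, then unpack each of the six equations (\ref{eq:equu1})--(\ref{eq:equu6}) on a generic triple $(\alpha,\beta,\gamma)$ and match the three resulting coordinate identities with three of the eighteen $\ets$ axioms. Your sample computation for (\ref{eq:equu1}) is correct and agrees with the paper's assignment of (\ref{eq17}), (\ref{eq18}), (\ref{EQ17}) to that equation.
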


\begin{proof}
By Lemma~5 in~\cite{Foi20}, Eqs.~(34)-(38) are equivalent to $(\Omega, \leftarrow, \rightarrow, \lhd ,\rhd)$ being an EDS. Moreover, direct computations prove that Eq.~(\ref{eq:equu1}) is equivalent to Eqs.~(\ref{eq17}),(\ref{eq18}) and (\ref{EQ17}); Eq.~(\ref{eq:equu2}) is equivalent to Eqs.~(\ref{eq19}), (\ref{eq20}) and (\ref{EQ20}); Eq.~(\ref{eq:equu3}) is equivalent to Eqs.~(\ref{eq21}), (\ref{eq22}) and (\ref{EQ23}); Eq.~(\ref{eq:equu4}) is equivalent to Eqs.~(\ref{eq23}), (\ref{eq24}) and (\ref{EQ26}); Eq.~(\ref{eq:equu5}) is equivalent to Eqs.~(\ref{eq25}), (\ref{eq26}) and (\ref{EQ29}); Eq.~(\ref{eq:equu6}) is equivalent to Eqs.~(\ref{eq27}), (\ref{EQ32}) and (\ref{EQ33}).
\end{proof}

\subsection{A description of all $\lambda$-$\ets$ of cardinality two}

The following table gives all $\lambda$-$\ets$. We slightly generalize our definition, by accepting more general
maps $\varphi_\cdot:\K\Omega^{\otimes 2}\longrightarrow \K\Omega$.
The underlying set is $\{a,b\}$ and all the products are given by a $2\times 2$ table.
Here, $\lambda,\mu$ are elements of the base field $\K$.

\[\begin{array}{|c|c|c|c|c|c|c|}
\hline\mbox{Type}&\leftarrow&\rightarrow&\triangleleft&\triangleright&\varphi_{\ast}&\mbox{Name}\\
\hline A&\begin{pmatrix}
a&a\\
a&a
\end{pmatrix}&
\begin{pmatrix}
a&a\\
a&a
\end{pmatrix}
&\begin{pmatrix}
a&a\\
a&a
\end{pmatrix}
&\begin{pmatrix}
a&a\\
a&a
\end{pmatrix}
&\begin{pmatrix}
(\lambda+\mu)a&(\lambda+\mu)a\\
(\lambda+\mu)a&\lambda a+\mu b
\end{pmatrix}&A_1(\lambda,\mu)\\
\cline{4-7}&&&\begin{pmatrix}
a&b\\
a&b
\end{pmatrix}&
\begin{pmatrix}
a&a\\
b&b
\end{pmatrix}&\begin{pmatrix}
(\lambda+\mu)a&(\lambda+\mu)a\\
(\lambda+\mu)a&\lambda a+\mu b
\end{pmatrix}&A_2(\lambda,\mu)\\
\hline B&\begin{pmatrix}
a&a\\
a&a
\end{pmatrix}&
\begin{pmatrix}
a&b\\
a&b
\end{pmatrix}&
\begin{pmatrix}
a&a\\
a&a
\end{pmatrix}&
\begin{pmatrix}
a&a\\
a&a
\end{pmatrix}&
\begin{pmatrix}
\lambda a&\lambda a\\
\lambda a&\lambda a
\end{pmatrix},\:
\begin{pmatrix}
\lambda a&\lambda b\\
\lambda a&\lambda b
\end{pmatrix}&B_1'(\lambda),\:B_1''(\lambda)\\
\cline{4-5}\cline{7-7}&&&
\begin{pmatrix}
a&b\\
a&b
\end{pmatrix}
&\begin{pmatrix}
a&a\\
b&b
\end{pmatrix}&&B_2'(\lambda),\:B_2''(\lambda)\\
\hline C&
\begin{pmatrix}
a&a\\
a&b
\end{pmatrix}&\begin{pmatrix}
a&a\\
a&b
\end{pmatrix}
&\begin{pmatrix}
a&a\\
a&a
\end{pmatrix}
&\begin{pmatrix}
a&a\\
a&a
\end{pmatrix}
&\begin{pmatrix}
\lambda a&\lambda a\\
\lambda a&\lambda b
\end{pmatrix}&C_1(\lambda)\\
\cline{4-5}\cline{7-7}&&&\begin{pmatrix}
a&b\\
a&b
\end{pmatrix}
&\begin{pmatrix}
a&a\\
b&b
\end{pmatrix}&&C_3(\lambda)\\
\cline{4-5}\cline{7-7}&&&
\begin{pmatrix}
b&b\\
b&b
\end{pmatrix}&
\begin{pmatrix}
b&b\\
b&b
\end{pmatrix}&&C_5(\lambda)\\
\cline{4-7}&&&
\begin{pmatrix}
a&a\\
a&a
\end{pmatrix}&
\begin{pmatrix}
b&b\\
b&b
\end{pmatrix}&\begin{pmatrix}
0&0\\
0&0
\end{pmatrix}&C_2\\
\cline{4-5}\cline{7-7}&&&
\begin{pmatrix}
b&b\\
b&b
\end{pmatrix}&
\begin{pmatrix}
a&a\\
a&a
\end{pmatrix}&&C_4\\
\hline \end{array}\]

\[\begin{array}{|c|c|c|c|c|c|c|}
\hline\mbox{Type}&\leftarrow&\rightarrow&\triangleleft&\triangleright&\varphi_{\ast}&\mbox{Name}\\
\hline D&\begin{pmatrix}
a&a\\
b&b
\end{pmatrix}&
\begin{pmatrix}
a&a\\
a&a
\end{pmatrix}
&\begin{pmatrix}
a&a\\
a&a
\end{pmatrix}&
\begin{pmatrix}
a&a\\
a&a
\end{pmatrix}&
\begin{pmatrix}
\lambda a&\lambda a\\
\lambda a&\lambda a
\end{pmatrix},\:
\begin{pmatrix}
\lambda a&\lambda a\\
\lambda b&\lambda b
\end{pmatrix}&D_1'(\lambda),\:D_1''(\lambda)\\
\cline{4-5}\cline{7-7}&&&\begin{pmatrix}
a&b\\
a&b
\end{pmatrix}&
\begin{pmatrix}
a&a\\
b&b
\end{pmatrix}&&D_2'(\lambda),\:D_2''(\lambda)\\
\hline E&\begin{pmatrix}
a&a\\
b&b
\end{pmatrix}&
\begin{pmatrix}
a&a\\
b&b
\end{pmatrix}&
\begin{pmatrix}
a&a\\
a&a
\end{pmatrix}&
\begin{pmatrix}
a&a\\
a&a
\end{pmatrix}&
\begin{pmatrix}
\lambda a&\lambda a\\
\lambda b&\lambda b
\end{pmatrix}&E_1(\lambda)\\
\cline{4-5}\cline{7-7}&&&\begin{pmatrix}
a&b\\
a&b
\end{pmatrix}&
\begin{pmatrix}
a&a\\
b&b
\end{pmatrix}&&E_3(\lambda)\\
\cline{4-7}&&&\begin{pmatrix}
a&a\\
a&a
\end{pmatrix}&
\begin{pmatrix}
b&b\\
b&b
\end{pmatrix}&
\begin{pmatrix}
0&0\\
0&0
\end{pmatrix}&E_2\\
\hline F&\begin{pmatrix}
a&a\\
b&b
\end{pmatrix}&
\begin{pmatrix}
a&b\\
a&b
\end{pmatrix}&
\begin{pmatrix}
a&a\\
a&a
\end{pmatrix}&
\begin{pmatrix}
a&a\\
a&a
\end{pmatrix}&
\begin{pmatrix}
(\lambda+\mu) a&(\lambda+\mu)a\\
(\lambda+\mu)a&\lambda a+\mu b
\end{pmatrix},\:
\begin{pmatrix}
(\lambda+\mu)a&(\lambda+\mu)b\\
(\lambda+\mu)b&\lambda a+\mu b
\end{pmatrix},&F'_1(\lambda,\mu),\:F_1''(\lambda,\mu)\\
&&&&&\begin{pmatrix}
\lambda a&\lambda b\\
\lambda a&\lambda b
\end{pmatrix},\:
\begin{pmatrix}
\lambda a&\lambda a\\
\lambda b&\lambda b
\end{pmatrix}&F_1'(\lambda),\:F_1''(\lambda)\\
\cline{4-7}&&&
\begin{pmatrix}
a&b\\
a&b
\end{pmatrix}&
\begin{pmatrix}
a&a\\
b&b
\end{pmatrix}&\mbox{any associative product $*$}&F_3(*)\\
\cline{4-7}&&&\begin{pmatrix}
a&b\\
b&a
\end{pmatrix}&
\begin{pmatrix}
a&b\\
b&a
\end{pmatrix}&
\begin{pmatrix}
\lambda a&0\\
0&\lambda b
\end{pmatrix}&F_4(\lambda)\\
\cline{4-7}&&&
\begin{pmatrix}
a&a\\
a&a
\end{pmatrix}&
\begin{pmatrix}
b&b\\
b&b
\end{pmatrix}&
\begin{pmatrix}
0&0\\
0&0
\end{pmatrix}&F_2\\
\cline{4-5}\cline{7-7}&&&\begin{pmatrix}
a&b\\
b&a
\end{pmatrix}&
\begin{pmatrix}
b&a\\
a&b
\end{pmatrix}&&F_5\\
\hline G&\begin{pmatrix}
a&b\\
a&b
\end{pmatrix}&
\begin{pmatrix}
a&b\\
a&b
\end{pmatrix}&
\begin{pmatrix}
a&a\\
a&a
\end{pmatrix}&
\begin{pmatrix}
a&a\\
a&a
\end{pmatrix}&
\begin{pmatrix}
\lambda a&\lambda b\\
\lambda a&\lambda b
\end{pmatrix}&G_1(\lambda)\\
\cline{4-5}\cline{7-7}&&&
\begin{pmatrix}
a&b\\
a&b
\end{pmatrix}&
\begin{pmatrix}
a&a\\
b&b
\end{pmatrix}&&G_3(\lambda)\\
\cline{4-7}&&&\begin{pmatrix}
a&a\\
a&a
\end{pmatrix}&
\begin{pmatrix}
b&b\\
b&b
\end{pmatrix}&
\begin{pmatrix}
0&0\\
0&0
\end{pmatrix}&G_2\\
\hline H&\begin{pmatrix}
a&b\\
b&a
\end{pmatrix}&
\begin{pmatrix}
a&b\\
b&a
\end{pmatrix}&
\begin{pmatrix}
a&a\\
a&a
\end{pmatrix}&
\begin{pmatrix}
a&a\\
a&a
\end{pmatrix}&
\begin{pmatrix}
\lambda a&\lambda b\\
\lambda b&\lambda a
\end{pmatrix}&H_1(\lambda)\\
\cline{4-5}\cline{7-7}&&&
\begin{pmatrix}
a&b\\
a&b
\end{pmatrix}&
\begin{pmatrix}
a&a\\
b&b
\end{pmatrix}&&H_2(\lambda)\\
\hline\end{array}\]

The commutative  $\lambda$-$\ets$ are the ones of type $A$ and $H$, $C_1(\lambda)$, $C_3(\lambda)$, $C_5(\lambda)$,
$F'_1(\lambda,\mu)$, $F_1''(\lambda,\mu)$ and $F_4(\lambda)$.
The opposite of $B_1'(\lambda)$, $B_1''(\lambda)$, $B_2'(\lambda)$ and $B_2''(\lambda)$ are respectively
$D_1'(\lambda)$, $D_1''(\lambda)$, $D_2'(\lambda)$ and $D_2''(\lambda)$.
The opposite of $C_2$ is $C_4$. The opposite of $E_1(\lambda)$, $E_2$ and $E_3(\lambda)$ are respectively
$G_1(\lambda)$, $G_2$ and $G_3(\lambda)$. The opposite of $F'_1(\lambda)$ is $F''_1(\lambda)$.
The $\lambda$-$\ets$ $F_2$ and $F_5$ are not commutative but are isomorphic to their opposite in a non trivial way.
Finally, if $*$ is an associative product, the opposite of $F_3(*)$ is $F_3(*^{op})$.

\subsection{A description of all $\ets$ of cardinality two}

The following table gives all the $\ets$ of cardinality 2.

\[\begin{array}{|c|c|c|c|c|c|c|}
\hline\mbox{Type}&\leftarrow&\rightarrow&\triangleleft&\triangleright&\ast&\cdot\\
\hline A_1&\begin{pmatrix}
a&a\\
a&a
\end{pmatrix}&
\begin{pmatrix}
a&a\\
a&a
\end{pmatrix}
&\begin{pmatrix}
a&a\\
a&a
\end{pmatrix}
&\begin{pmatrix}
a&a\\
a&a
\end{pmatrix}
&\begin{pmatrix}
a&a\\
a&a
\end{pmatrix},\:
\begin{pmatrix}
b&b\\
b&b
\end{pmatrix}&
\begin{pmatrix}
a&a\\
a&a
\end{pmatrix},\:
\begin{pmatrix}
a&a\\
a&b
\end{pmatrix}\\
\cline{1-1}\cline{4-5}A_2&&&\begin{pmatrix}
a&b\\
a&b
\end{pmatrix}&
\begin{pmatrix}
a&a\\
b&b
\end{pmatrix}&&\\
\hline B_1&\begin{pmatrix}
a&a\\
a&a
\end{pmatrix}&
\begin{pmatrix}
a&b\\
a&b
\end{pmatrix}&
\begin{pmatrix}
a&a\\
a&a
\end{pmatrix}&
\begin{pmatrix}
a&a\\
a&a
\end{pmatrix}&
\begin{pmatrix}
a&a\\
a&a
\end{pmatrix},\:
\begin{pmatrix}
b&b\\
b&b
\end{pmatrix}&
\begin{pmatrix}
a&a\\
a&a
\end{pmatrix},\:
\begin{pmatrix}
a&b\\
a&b
\end{pmatrix}\\
\cline{1-1}\cline{4-5}B_2&&&
\begin{pmatrix}
a&b\\
a&b
\end{pmatrix}&
\begin{pmatrix}
a&a\\
b&b
\end{pmatrix}&&\\
\hline C_1&\begin{pmatrix}
a&a\\
a&b
\end{pmatrix}&\begin{pmatrix}
a&a\\
a&b
\end{pmatrix}
&\begin{pmatrix}
a&a\\
a&a
\end{pmatrix}
&\begin{pmatrix}
a&a\\
a&a
\end{pmatrix}
&\begin{pmatrix}
a&a\\
a&a
\end{pmatrix},\:
\begin{pmatrix}
b&b\\
b&b
\end{pmatrix}&
\begin{pmatrix}
a&a\\
a&b
\end{pmatrix}\\
\cline{1-1}\cline{4-5}C_3&&&\begin{pmatrix}
a&b\\
a&b
\end{pmatrix}
&\begin{pmatrix}
a&a\\
b&b
\end{pmatrix}&&\\
\cline{1-1}\cline{4-5}C_5&&&
\begin{pmatrix}
b&b\\
b&b
\end{pmatrix}&
\begin{pmatrix}
b&b\\
b&b
\end{pmatrix}&&\\
\hline D_1&\begin{pmatrix}
a&a\\
b&b
\end{pmatrix}&
\begin{pmatrix}
a&a\\
a&a
\end{pmatrix}
&\begin{pmatrix}
a&a\\
a&a
\end{pmatrix}&
\begin{pmatrix}
a&a\\
a&a
\end{pmatrix}&
\begin{pmatrix}
a&a\\
a&a
\end{pmatrix},\:
\begin{pmatrix}
b&b\\
b&b
\end{pmatrix}&
\begin{pmatrix}
a&a\\
a&a
\end{pmatrix},\:
\begin{pmatrix}
a&a\\
b&b
\end{pmatrix}\\
\cline{1-1}\cline{4-5}D_2&&&
\begin{pmatrix}
a&b\\
a&b
\end{pmatrix}&
\begin{pmatrix}
a&a\\
b&b
\end{pmatrix}&&\\
\hline E_1&\begin{pmatrix}
a&a\\
b&b
\end{pmatrix}&
\begin{pmatrix}
a&a\\
b&b
\end{pmatrix}&
\begin{pmatrix}
a&a\\
a&a
\end{pmatrix}&
\begin{pmatrix}
a&a\\
a&a
\end{pmatrix}&
\begin{pmatrix}
a&a\\
a&a
\end{pmatrix},\:
\begin{pmatrix}
b&b\\
b&b
\end{pmatrix}&
\begin{pmatrix}
a&a\\
b&b
\end{pmatrix}\\
\cline{4-6} E_3&&
&
\begin{pmatrix}
a&b\\
a&b
\end{pmatrix}&
\begin{pmatrix}
a&a\\
b&b
\end{pmatrix}&
\begin{pmatrix}
a&a\\
a&a
\end{pmatrix}&\\
\hline F_1&\begin{pmatrix}
a&a\\
b&b
\end{pmatrix}&
\begin{pmatrix}
a&b\\
a&b
\end{pmatrix}&
\begin{pmatrix}
a&a\\
a&a
\end{pmatrix}&
\begin{pmatrix}
a&a\\
a&a
\end{pmatrix}&
\begin{pmatrix}
a&a\\
a&a
\end{pmatrix},\:
\begin{pmatrix}
b&b\\
b&b
\end{pmatrix}&
\begin{pmatrix}
a&a\\
a&a
\end{pmatrix},\:
\begin{pmatrix}
a&a\\
a&b
\end{pmatrix},\:
\begin{pmatrix}
a&a\\
b&b
\end{pmatrix},\\
&&&&&&
\begin{pmatrix}
a&b\\
a&b
\end{pmatrix},\:
\begin{pmatrix}
a&b\\
b&a
\end{pmatrix},\:
\begin{pmatrix}
a&b\\
b&b
\end{pmatrix}\\
\cline{1-1}\cline{4-7} F_3&&&
\begin{pmatrix}
a&b\\
a&b
\end{pmatrix}&
\begin{pmatrix}
a&a\\
b&b
\end{pmatrix}&
\begin{pmatrix}
a&a\\
a&a
\end{pmatrix}&
\begin{pmatrix}
a&a\\
a&a
\end{pmatrix},\:
\begin{pmatrix}
a&a\\
a&b
\end{pmatrix},\:
\begin{pmatrix}
a&a\\
b&b
\end{pmatrix},\\
&&&&&&
\begin{pmatrix}
a&b\\
a&b
\end{pmatrix},\:
\begin{pmatrix}
a&b\\
b&a
\end{pmatrix},\:
\begin{pmatrix}
a&b\\
b&b
\end{pmatrix},\\
&&&&&&
\begin{pmatrix}
b&a\\
a&b
\end{pmatrix},\:
\begin{pmatrix}
b&b\\
b&b
\end{pmatrix}\\
\cline{6-7}&&&&&
\begin{pmatrix}
a&a\\
b&b
\end{pmatrix},\:
\begin{pmatrix}
b&b\\
a&a
\end{pmatrix}&
\begin{pmatrix}
a&b\\
a&b
\end{pmatrix}\\
\cline{6-7}&&&&&
\begin{pmatrix}
a&b\\
a&b
\end{pmatrix},\:
\begin{pmatrix}
b&a\\
b&a
\end{pmatrix}
&
\begin{pmatrix}
a&a\\
b&b
\end{pmatrix}\\
\hline G_1&\begin{pmatrix}
a&b\\
a&b
\end{pmatrix}&
\begin{pmatrix}
a&b\\
a&b
\end{pmatrix}&
\begin{pmatrix}
a&a\\
a&a
\end{pmatrix}&
\begin{pmatrix}
a&a\\
a&a
\end{pmatrix}&
\begin{pmatrix}
a&a\\
a&a
\end{pmatrix},\:
\begin{pmatrix}
b&b\\
b&b
\end{pmatrix}&
\begin{pmatrix}
a&b\\
a&b
\end{pmatrix}\\
\cline{1-1}\cline{4-7}G_3&&&
\begin{pmatrix}
a&b\\
a&b
\end{pmatrix}&
\begin{pmatrix}
a&a\\
b&b
\end{pmatrix}&
\begin{pmatrix}
a&a\\
a&a
\end{pmatrix}&
\begin{pmatrix}
a&b\\
a&b
\end{pmatrix}\\
\hline H_1&\begin{pmatrix}
a&b\\
b&a
\end{pmatrix}&
\begin{pmatrix}
a&b\\
b&a
\end{pmatrix}&
\begin{pmatrix}
a&a\\
a&a
\end{pmatrix}&
\begin{pmatrix}
a&a\\
a&a
\end{pmatrix}&
\begin{pmatrix}
a&a\\
a&a
\end{pmatrix},\:
\begin{pmatrix}
b&b\\
b&b
\end{pmatrix}&
\begin{pmatrix}
a&a\\
a&b
\end{pmatrix}\\
\cline{1-1}\cline{4-5}H_2&&&
\begin{pmatrix}
a&b\\
a&b
\end{pmatrix}&
\begin{pmatrix}
a&a\\
b&b
\end{pmatrix}&&\\
\hline\end{array}\]

\smallskip

\noindent {\bf Acknowledgments}: the authors acknowledge support from the grant ANR-20-CE40-0007
\emph{Combinatoire Alg\'ebrique, R\'esurgence, Probabilit\'es Libres et Op\'erades}. The second author is also supported
by China Scholarship Council to visit ULCO and he thanks Prof. Dominique Manchon for valuable suggestions.
\medskip

\noindent\textbf{Data avalaibility}: this article has no associated data.

\bibliographystyle{amsplain}
\bibliography{biblio}

\end{document}

\end{document}